\let\oldb\b
\let\oldl\l
\let\l\lambda
\let\wtilde\tilde
\let\bar\overline
\def\O{\mathcal O}
\def\sing{{\mathrm{sing}}}
\let\b\beta
\theoremstyle{theorem}
\numberwithin{thm}{section}
\theoremstyle{definition}
\title{Homogeneous coordinate rings\\as direct summands of regular rings}
\author{Devlin Mallory\thanks{This material is based upon work supported by the National Science Foundation under Grant No.~\#1840190.}}
\begin{document}
\maketitle

\begin{abstract}
We study the question of when a ring can be realized as a direct summand of a regular ring by examining the case of homogeneous coordinate rings.
We present very strong obstacles to expressing a graded ring with isolated singularity as a \emph{finite} graded direct summand.
For several classes of examples (del Pezzo surfaces, hypersurfaces), we give a complete classification of which coordinate rings can be expressed as direct summands (not necessarily finite), and in doing so answer a question of Hara about the FFRT property of the quintic del Pezzo. 
We also examine what happens in the case where the ring does not have isolated singularities, through topological arguments: as an example, we give a classification of which coordinate rings of singular cubic surfaces can be written as finite direct summands of regular rings.
\end{abstract}

\section{Introduction}

A ring $R$ is called a direct summand of a regular ring if there is a regular ring $S$, an inclusion $R\hookrightarrow S$, and an $R$-module splitting $S\to R$ of the inclusion.
Direct summands of regular rings form a well-studied class of rings, and one for which many hard problems are more tractable. For example, direct summands of regular rings (or invariant rings that may not be direct summands) have been studied in contexts such as local cohomology \cite{NB,HNB}, differential operators \cite{Smith,AMHNB,MHJNB}, and Frobenius splittings \cite{SVdB,RSB}.

Because these rings have so many nice properties, one should expect them to be somewhat rare. 
However, it is quite quite hard to formulate and prove such statements: Given a ring $R$, how can one ensure that there is no possible regular ring $S$ and a split inclusion $R\hookrightarrow S$?
There are some ``easy'' properties such a ring must satisfy: $R$ must be
 a normal domain,
 Cohen--Macaulay \cite{HR},
 have rational singularities (in characteristic 0) \cite{Boutot}, and so forth. But there are many rings with these necessary properties that have no obvious reason to be a direct summand of a regular ring.

To show a ring is not a direct summand of a regular ring,
one can attempt to calculate the ``difficult'' invariants mentioned above (e.g., associated primes of local cohomology, differential operators, Frobenius splittings), and indeed to date this is perhaps the only known way to show that a ring is not a direct summand of a regular local ring (see, for example, \cite{NB}). However, computation of these invariants is often  not feasible, and so one would like to have other methods to show that a ring is not a direct summand.

In this paper, we examine the case of graded rings,  and in particular those graded rings arising as the homogeneous coordinate rings of smooth projective varieties, or those with mild singularities. Such rings provide a rich class of examples, demonstrating both a relative rarity of examples of direct summands and the subtlety of determining whether a ring is a direct summand.

In Section~\ref{finsum},
as a consequence of algebro-geometric arguments involving characterizations of projective space, we obtain the following restrictive criteria for the homogeneous coordinate ring of a smooth variety to be a \emph{finite} graded direct summand of a regular ring:

\begin{thm}
Let $R$ be a $\N$-graded $\C$-algebra, finitely generated in degree 1, with an isolated singularity. Then $R$ is not a finite graded direct summand of a polynomial ring unless $\Proj R\cong \P^n$.
\end{thm}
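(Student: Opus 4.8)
The plan is to convert the algebraic hypothesis into a finite surjective morphism from a projective space onto $X:=\Proj R$ and then invoke the characterization of projective space among smooth varieties that admit such covers. First I would record what the hypotheses give geometrically. Since $R$ embeds in a polynomial ring it is a domain, and since it is a direct summand of a regular ring it is in fact a normal domain; the isolated-singularity hypothesis then says exactly that $X=\Proj R$ is smooth (the affine cone $\operatorname{Spec} R$ is regular away from its vertex). Write the polynomial ring as $S=\C[x_1,\dots,x_n]$ and note $R_0=\C$. Because the inclusion $R\hookrightarrow S$ is graded and module-finite, each variable $x_i$ must lie in a strictly positive degree $a_i$: a degree-$0$ variable would make $S_0$ a positive-dimensional polynomial ring, whereas finiteness over $R_0=\C$ forces $S_0$ to be finite-dimensional. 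Hence $\Proj S$ is the weighted projective space $\P(a_1,\dots,a_n)$, and since $\dim R=\dim S=n$ it has the same dimension $n-1$ as $X$.

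Next I would verify that the finite graded inclusion induces an honest finite surjective morphism $\pi\colon \P(a_1,\dots,a_n)\to X$. The only subtlety is that the induced rational map on $\Proj$ be everywhere defined: as $S$ is a finite $R$-module, $S/R_+S$ is a finite-dimensional graded $\C$-algebra, so the image of $S_+$ in it is nilpotent, giving $\sqrt{R_+S}=S_+$; this is precisely the condition guaranteeing that $\Proj S\to\Proj R$ is a morphism on all of $\Proj S$. Surjectivity follows from lying-over, and finiteness follows because on each standard chart $D_+(f)$ (with $f\in R_+$ homogeneous) the map is $\operatorname{Spec}(S_f)_0\to\operatorname{Spec}(R_f)_0$ with $(S_f)_0$ finite over $(R_f)_0$.

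The key geometric maneuver is to replace the weighted source by an ordinary projective space, since it is exactly the weighting of $S$ that a priori blocks a direct application of the characterization to $\Proj S$. For any positive weights there is a finite graded inclusion $S=\C[x_1,\dots,x_n]\hookrightarrow S'=\C[w_1,\dots,w_n]$ into a standard-graded polynomial ring given by $x_i\mapsto w_i^{a_i}$; here $S'$ is integral over the image because each $w_i$ satisfies $T^{a_i}-w_i^{a_i}$. This exhibits a finite surjective morphism $\P^{n-1}=\Proj S'\to\P(a_1,\dots,a_n)$, and composing with $\pi$ produces a finite surjective morphism $\P^{n-1}\to X$ onto the smooth variety $X$. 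At this point I would appeal to Lazarsfeld's solution of the Remmert--Van de Ven problem: over $\C$, a smooth projective variety admitting a finite surjective morphism from a projective space is itself a projective space. This forces $X\cong\P^{n-1}$, that is, $\Proj R\cong\P^n$ in the notation of the statement.

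The main obstacle is essentially packaged into that final citation, whose honest content is the characterization of projective space by ampleness of its tangent bundle (Mori's theorem) underlying Lazarsfeld's argument; this is where characteristic $0$ is used. Everything on the algebraic side---positivity of the weights, well-definedness and finiteness of the map on $\Proj$, and the covering trick $x_i\mapsto w_i^{a_i}$ that erases the weighting---is elementary but must be assembled carefully. I would also remark that the splitting hypothesis enters only to ensure that $R$ is normal; the argument in fact establishes the stronger statement that a graded $\C$-algebra generated in degree $1$ with isolated singularity admitting \emph{any} finite graded inclusion into a polynomial ring must have $\Proj R\cong\P^n$.
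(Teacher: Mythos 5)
Your proposal is correct and takes essentially the same route as the paper: smoothness of $\Proj R$ from the isolated-singularity hypothesis, a finite surjection from a (weighted) projective space induced by the graded module-finite inclusion, reduction from $\P(a_1,\dots,a_n)$ to an honest projective space via the finite cover $x_i\mapsto w_i^{a_i}$ (which the paper phrases as the quotient of $\P^{n-1}$ by $\mu_{a_1}\times\dots\times\mu_{a_n}$), and finally Lazarsfeld's solution of the Remmert--Van de Ven problem. The additional care you take in checking that the induced map on $\Proj$ is an everywhere-defined finite surjective morphism, and your closing observation that the splitting is never used (only the finite graded inclusion matters), both correspond to points the paper asserts or remarks on without elaboration.
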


For certain classes of examples, we can go beyond the module-finite case and give a complete characterization of which coordinate rings are direct summands. For example, in Section~\ref{delPezzo} we obtain the following:

\begin{thm}
A complex (smooth) del Pezzo surface $X_d$ of degree $d$ has a homogeneous coordinate ring that is a direct summand of a regular ring if and only if $d\geq 5$.
\end{thm}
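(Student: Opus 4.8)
The plan is to identify the object of study with the section ring $R=\bigoplus_{n\ge 0}H^0(X_d,L^{\otimes n})$ for a suitable very ample $L$ (the anticanonical polarization when $d\ge 3$), so that $\Proj R\cong X_d$ and the cone $\operatorname{Spec}R$ has an isolated singularity at the vertex. I will prove the two implications by opposite methods: for $d\ge 5$ I will construct an explicit splitting, while for $d\le 4$ I will produce a differential-operator obstruction that rules out \emph{every} choice of $L$ at once.

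For the ``if'' direction, observe first that for $6\le d\le 9$ the surface $X_d$ is toric; choosing $L$ torus-invariant realizes $R$ as a normal affine semigroup ring, and such rings are direct summands of polynomial rings (via the Cox construction they are rings of invariants of a linearly reductive diagonalizable group acting linearly on a polynomial ring, and the Reynolds operator furnishes the splitting). The only non-toric case with $d\ge 5$ is the quintic $X_5\cong \bar{M}_{0,5}$, which I will handle through its description as the GIT quotient $(\P^1)^5/\!/\mathrm{SL}_2$ for the symmetric linearization: a homogeneous coordinate ring of $X_5$ is then the ring of $\mathrm{SL}_2$-invariants of the toric section ring $T=\bigoplus_n H^0\bigl((\P^1)^5,\O(n,\dots,n)\bigr)$, and since $\mathrm{SL}_2$ is reductive its Reynolds operator splits the inclusion $R\hookrightarrow T$. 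As $T$ is itself a direct summand of a polynomial ring, so is $R$. This is also the case in which the analysis resolves Hara's question about the FFRT property of $X_5$.

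For the ``only if'' direction I will use the known fact that a direct summand of a regular ring is $D$-simple, i.e.\ simple as a module over its ring of differential operators $D(R)$. The key computation is of the negatively graded part of $D(R)$ for a section ring: after passing to a projectively normal (Veronese) re-embedding, an operator lowering degree by $j\ge 1$ has, in each order $i\ge 1$, symbol lying in $H^0(X_d,\mathrm{Sym}^i T_{X_d}\otimes L^{-j})$, while order $0$ contributes nothing since $R_{-j}=0$. Hence if
\[
H^0\bigl(X_d,\ \mathrm{Sym}^i T_{X_d}\otimes L^{-j}\bigr)=0\qquad\text{for all }i,j\ge 1,
\]
then $D(R)_{<0}=0$, so the graded maximal ideal $\mathfrak{m}=R_{>0}$ is a proper nonzero $D(R)$-submodule, $R$ fails to be $D$-simple, and therefore $R$ is not a direct summand of any regular ring. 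The point is that this vanishing is precisely the statement that $T_{X_d}$ is \emph{not big} (it says $\O_{\P(T_{X_d})}(1)$ acquires no sections after twisting down by an ample class), a condition intrinsic to $X_d$ and independent of $L$; so a single bigness statement eliminates every homogeneous coordinate ring simultaneously.

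The main obstacle is therefore the geometric input that $T_{X_d}$ is big if and only if $d\ge 5$. The essential content is the failure of bigness for $d\le 4$, where $X_d$ is the blow-up of $\P^2$ at $\ge 5$ general points: I would establish this by controlling $h^0(\mathrm{Sym}^i T_{X_d}\otimes L^{-j})$ through the exact sequences relating $T_{X_d}$ to $T_{\P^2}$ and the exceptional divisors, showing that the twisted linear systems on $\P(T_{X_d})$ grow too slowly for $\O_{\P(T_{X_d})}(1)$ to be big. The boundary case $d=5$---verifying that $T_{X_5}$ \emph{is} big, consistent with the explicit splitting above, and drawing out the FFRT consequence---is the most delicate point, and is where I expect the bulk of the effort to lie.
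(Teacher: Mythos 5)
Your classification argument is correct, and for most of the cases it is the paper's own argument: degrees $6\le d\le 9$ are dispatched by toricness exactly as in the paper, and degrees $d\le 4$ by the chain ``direct summand of a regular ring $\Rightarrow$ $D$-simple \cite{Smith} $\Rightarrow$ $T_{X_d}$ big \cite{Hsiao},'' combined with non-bigness of $T_{X_d}$ for $d\le 4$. Where you genuinely diverge is the quintic. The paper works with the Cox ring of $X_5$, which by \cite{STV} is the Pl\"ucker coordinate ring of the Grassmannian $G(2,5)$; it realizes \emph{every} section ring $\bigoplus_m H^0(X_5,\O_{X_5}(mD))$ as invariants of a $\G_m^5$-action on $(\text{Cox ring})[t]$, and then uses that the $G(2,5)$ ring is itself a ring of invariants of a special linear group acting on a polynomial ring. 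You instead present the anticanonical ring as the $\SL_2$-invariants of the Segre product $T=\bigoplus_n H^0\bigl((\P^1)^5,\O(n,\dots,n)\bigr)$, which is toric. These are the same double quotient of $\C[x_1,y_1,\dots,x_5,y_5]$ by a torus and by $\SL_2$, with the two linearly reductive actions taken in the opposite order, so both are valid; the paper's ordering buys uniformity over all embeddings of $X_5$ (any ample $D$, not just multiples of $-K_{X_5}$), and it feeds directly into the answer to Hara's question, since the graded FFRT property is known for the Grassmannian ring \cite{RSB} and descends along graded splittings \cite{SVdB}. Your ordering proves the theorem as stated (existence of one good coordinate ring suffices), but to recover the FFRT corollary you would need the corresponding input for your toric ring $T$ rather than for $G(2,5)$.

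Two caveats. First, the input you rightly flag as the main obstacle---non-bigness of $T_{X_d}$ for $d\le 4$---is a substantial theorem in its own right (it is the main result of the paper's reference \cite{Me}; cf.\ also \cite{HLS}); your one-sentence plan via blow-up exact sequences is not a proof, and the paper does not reprove it either but cites it, which is what you should do. (Your reduction from failure of $D$-simplicity to the vanishing of $H^0(X_d,\Sym^i T_{X_d}\otimes L^{-j})$ is essentially Hsiao's argument and is fine, with the small correction that the symbol of an order-$i$ operator of degree $-j$ lives in a sheaf filtered with graded pieces $\Sym^{i'}T_{X_d}\otimes L^{-j}$ for $i'\le i$; since your vanishing hypothesis ranges over all $i,j\ge 1$, this costs nothing.) Second, your expectation that the bulk of the effort lies in verifying that $T_{X_5}$ \emph{is} big is misplaced: no such verification is needed, since your explicit splitting already settles $d=5$, and bigness of $T_{X_5}$ is then a consequence (via $D$-simplicity of the coordinate ring), not an input.
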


As a consequence of our proof, we answer a question posed by \cite[Section~3]{Hara2}:

\begin{thm}
The homogeneous coordinate ring of a degree-5 del Pezzo in characteristic $p>2$ is a direct summand of a ring with the FFRT property, and thus itself has FFRT.
\end{thm}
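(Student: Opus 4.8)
The plan is to realize the coordinate ring of $X_5$ as a graded direct summand of its Cox ring, to identify that Cox ring with the homogeneous coordinate ring of a Grassmannian, and then to prove the FFRT property for the Grassmannian, from which the theorem follows by a descent argument. Concretely, I would first invoke the classical identification $\mathrm{Cox}(X_5)\cong\mathbb{F}[\mathrm{Gr}(2,5)]$ of the Cox ring of the quintic del Pezzo with the Plücker algebra of the Grassmannian of $2$-planes in a $5$-dimensional space: the ten $(-1)$-curves correspond to the ten Plücker coordinates and the five quadratic del Pezzo relations to the five Plücker relations, an identification valid in every characteristic. Grading $\mathrm{Cox}(X_5)$ by $\mathrm{Cl}(X_5)\cong\mathbb{Z}^5$, the anticanonical coordinate ring $R=\bigoplus_{n\ge0}H^0(X_5,-nK)$ is exactly the degree-zero subring for the induced grading by $\mathbb{Z}^4=\mathbb{Z}^5/\mathbb{Z}[-K]$ (the pieces of negative $-K$-degree vanish because $-K$ is ample and primitive), equivalently $R=\mathrm{Cox}(X_5)^{T'}$ for a subtorus $T'\cong\mathbb{G}_m^4$. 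This is precisely the direct-summand structure already used in the proof of the del Pezzo theorem, and projection onto the degree-zero part splits $R\hookrightarrow\mathrm{Cox}(X_5)$ as $R$-modules.

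Granting for the moment that $\mathrm{Cox}(X_5)$ has FFRT, I would deduce FFRT for $R$ by a graded descent, which is where the torus structure is essential (note that $\mathrm{Cox}(X_5)$ is \emph{not} module-finite over $R$, so the naive descent does not apply). Since $T'$ is a torus, hence linearly reductive in every characteristic, the degree-zero part $M_0=M^{T'}$ of any finitely generated $\mathbb{Z}^4$-graded $\mathrm{Cox}(X_5)$-module $M$ is a finitely generated $R$-module. Taking degree-zero parts of a graded decomposition $F^e_*\mathrm{Cox}(X_5)\cong\bigoplus_i M_i^{\oplus n_{i,e}}$ into finitely many indecomposables yields $F^e_*R\cong\bigoplus_i(M_i)_0^{\oplus n_{i,e}}$ as $R$-modules; as the finitely many $(M_i)_0$ are finitely generated, their indecomposable summands (graded Krull--Schmidt applies since $R$ is graded with $R_0$ a field) form a finite list independent of $e$, so $R$ has FFRT.

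The heart of the matter, and the step I expect to be the main obstacle, is to show that $S:=\mathbb{F}[\mathrm{Gr}(2,5)]$ itself has FFRT when $p>2$. Here I would use the first fundamental theorem $S=P^{\mathrm{SL}_2}$, with $P=\mathbb{F}[\mathrm{Mat}_{2\times5}]$ a polynomial ring; since Frobenius commutes with invariants, $F^e_*S=(F^e_*P)^{\mathrm{SL}_2}$, so it suffices to control the $\mathrm{SL}_2$-module structure of the free module $F^e_*P$ and extract its invariants. Because $P=\mathrm{Sym}(\mathrm{std}^{\oplus5})$ carries a good filtration, I expect the $S$-module $F^e_*S$ to split into modules of covariants $C_\lambda=\mathrm{Hom}_{\mathrm{SL}_2}(\nabla(\lambda),F^e_*P)$, and the crucial point is that only finitely many indecomposable $S$-modules occur among all the $C_\lambda$ and their summands as $e$ varies. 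To force this finiteness I would exploit the Frobenius-periodic structure of the indecomposable tilting modules of $\mathrm{SL}_2$ together with the finite generation of the total algebra of covariants over $S$, reducing the infinitely many highest weights to finitely many $S$-module types; this is precisely where the hypothesis $p>2$ enters, through the modular representation theory of type $A_1$. Establishing this finiteness is the genuine content of the theorem, the reduction and descent above being essentially formal. Combining the three steps, $R$ is a direct summand of the FFRT ring $S=\mathrm{Cox}(X_5)$ and hence has FFRT, answering Hara's question.
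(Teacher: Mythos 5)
Your overall reduction coincides with the paper's: identify $\mathrm{Cox}(X_5)$ with the Pl\"ucker coordinate ring of the Grassmannian $G(2,5)$ (the paper gets this from \cite[Corollary~35]{STV}, via the Pfaffian presentation), realize the anticanonical section ring as a ring of invariants of a torus acting on the Cox ring--- hence a graded direct summand in every characteristic---and then descend graded FFRT along the split inclusion. Two of your deviations are worth flagging, though neither is fatal. First, the paper takes invariants of $\G_m^5$ acting on $\mathrm{Cox}(X_5)[t]$, which handles an arbitrary ample divisor $D$; your subtorus $T'\cong\G_m^4$ with $\Z^4=\Z^5/\Z[-K]$ works for $-K$ (or any primitive ample class), but for non-primitive $D$ the quotient $\Z^5/\Z[D]$ has torsion and $T'$ must be replaced by a diagonalizable group scheme, so as written you only cover the anticanonical embedding while the paper's statement is for any embedding. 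Second, your descent argument suppresses the grading shifts: graded FFRT only gives $F^e_*\mathrm{Cox}(X_5)\cong\bigoplus_{i,j} M_i(\beta_{e,i,j})^{\oplus a_{e,i,j}}$ with shifts $\beta_{e,i,j}$, and the degree-zero part of $M_i(\beta)$ genuinely depends on the class of $\beta$ modulo the sublattice being contracted, so the finiteness of the resulting list of $R$-modules is not the formality your one-line argument suggests. This is exactly what the paper outsources to \cite[Proposition~3.1.6]{SVdB}, whose statement (crucially, not requiring module-finiteness) is quoted earlier in the paper.

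The genuine gap is the step you yourself call ``the heart of the matter'': your proposal does not prove that the Pl\"ucker ring of $G(2,5)$ has graded FFRT for $p>2$; it only outlines a strategy (good filtrations, modules of covariants $C_\lambda$, Frobenius-periodicity of $\mathrm{SL}_2$-tilting modules), with ``I expect'' at each decisive juncture. That finiteness statement---that only finitely many indecomposable module types occur among the summands of $F^e_*S$ as $e$ varies---is a hard theorem: it is \cite[Theorem~1.4]{RSB}, and its proof, which indeed proceeds along the representation-theoretic lines you indicate, occupies a separate paper. The paper under review does not reprove it; it cites it, and the entire proof of the statement in question is: direct summand of $\mathrm{Cox}(X_5)[t]$ by torus invariance, FFRT for $G(2,5)$ by \cite[Theorem~1.4]{RSB}, descent by \cite[Proposition~3.1.6]{SVdB}. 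So read as a self-contained argument your proposal is incomplete at precisely the one step carrying the content of the theorem; read as a reduction to known results, it becomes correct---and essentially identical to the paper's proof---the moment the sketch is replaced by the citation.
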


We can give a similar classification in the case of coordinate rings of hypersurfaces of degree $d\geq 3$ in $\P^n$; see Section~\ref{hypersurfaces}.

In Section~\ref{singcoh},
via an examination of the singular cohomology of del Pezzo surfaces with quotient singularities, we can also give a nearly complete classification of which singular cubic surfaces are direct summands of regular rings, and complete the classification by drawing on results of \cite{Gurjar}:

\begin{thm}
The homogeneous coordinate ring of a singular complex cubic surface is a finite graded direct summand of a regular ring if and only if the surface has quotient singularities of type $3A_2$.
\end{thm}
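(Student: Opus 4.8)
The plan is to prove both directions by combining an explicit quotient presentation (for the ``if'' direction) with a topological obstruction coming from the singular cohomology of the punctured affine cone (for the ``only if'' direction). For the ``if'' direction, I would first note that the cubic surface with three $A_2$ singularities is projectively equivalent to $X=\{x_0x_1x_2=x_3^3\}$, with homogeneous coordinate ring $R=\C[x_0,x_1,x_2,x_3]/(x_0x_1x_2-x_3^3)$. The key point is to recognize $R$ as an invariant ring: the substitution $x_0=u^3,\ x_1=v^3,\ x_2=w^3,\ x_3=uvw$ exhibits $R\cong\C[u,v,w]^{G}$, where $G=(\Z/3)^2$ acts by $(u,v,w)\mapsto(\zeta u,\eta v,(\zeta\eta)^{-1}w)$ for $(\zeta,\eta)\in\mu_3\times\mu_3$ (the invariant monomials $u^iv^jw^k$ are exactly those with $i\equiv j\equiv k \pmod 3$, generated by $u^3,v^3,w^3,uvw$). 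Since $G$ is finite and we are in characteristic zero, the Reynolds operator splits the module-finite inclusion $R\hookrightarrow\C[u,v,w]$, so $R$ is a finite graded direct summand of a polynomial ring once one rescales so that $\deg u=\deg v=\deg w=\tfrac13$ (equivalently, works with the compatible $\tfrac13\N$-grading); matching this to the standard grading is the only mildly delicate bookkeeping here.

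For the ``only if'' direction I would argue contrapositively. The standard necessary conditions (normality, Cohen--Macaulayness, and rational singularities) immediately eliminate the non-normal cubics and those with non--Du Val singularities, reducing us to cubic surfaces with rational double points, i.e.\ log del Pezzo surfaces of degree $3$. If such an $X$ has coordinate ring $R$ that is a finite graded direct summand of a regular ring $S$, then $S$ is a weighted polynomial ring in three variables, so its punctured spectrum $V$ is homotopy equivalent to $S^5$, and the inclusion $R\hookrightarrow S$ produces a finite surjection $f\colon V\to U$ onto the punctured cone $U$ over $X$. Because $X$ has quotient singularities, $U$ is a rational homology manifold, and the projection formula $f_*f^*=(\deg f)\cdot\mathrm{id}$ makes $f^*\colon H^*(U;\Q)\to H^*(V;\Q)$ split injective; hence $H^*(U;\Q)\cong H^*(S^5;\Q)$.

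I would then feed this into the Gysin sequence of the $\C^*$-bundle $U\to X$ associated to $\O_X(1)=-K_X$. As $X$ is a rational surface, $H^1(X;\Q)=H^3(X;\Q)=0$, and cup product with the ample class $c_1(-K_X)$ is injective in the relevant degrees, so the sequence collapses to $H^2(U;\Q)\cong\operatorname{coker}\!\big(H^0(X;\Q)\xrightarrow{\,\cup c_1\,}H^2(X;\Q)\big)$, of dimension $b_2(X)-1$. The forced vanishing $H^2(U;\Q)=0$ thus requires $b_2(X)=1$. Since $b_2(X)=7-\sum_i r_i$ with $r_i$ the ranks of the ADE lattices of the singularities, this means $\sum_i r_i=6$, leaving exactly the three maximal configurations on a cubic surface: $3A_2$, $A_1+A_5$, and $E_6$.

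The final and hardest step is to separate $3A_2$ from the other two maximal cases, which is invisible to rational cohomology: all three satisfy $H^*(X;\Q)\cong H^*(\P^2;\Q)$, so $U$ is a rational homology $5$-sphere in each. Here I would pass to the finer invariant $\pi_1(X^{\mathrm{sm}})$ (equivalently the orbifold fundamental group): for $X=\P^2/(\Z/3)$ one computes $\pi_1(X^{\mathrm{sm}})=\Z/3$, reflecting that $\P^2$ minus the three fixed points is the universal cover. The strategy is to show that a finite split inclusion into a regular ring forces a compatible global quotient presentation $X\cong\P^2/\Gamma$, so that the local fundamental groups at the singularities are subgroups of a global group $\Gamma$ whose order is pinned down by $9=|\Gamma|\,(-K_X)^2=3|\Gamma|$, giving $|\Gamma|=3$. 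This is compatible only with $A_2$ points (local group $\Z/3$), and excludes $A_1+A_5$ (local groups $\Z/2,\Z/6$) and $E_6$ (local group of order $24$). Establishing that these two surfaces genuinely fail to carry such a quotient structure --- rather than merely exhibiting incompatible local groups under an assumed one --- is exactly where I expect the main difficulty to lie, and where I would invoke the results of \cite{Gurjar} on the fundamental groups of the smooth loci of log del Pezzo surfaces of Picard number one. I anticipate that relating the embedding of the Du Val root sublattice into the $E_6$ lattice of the cubic surface to the torsion of $H^2(X;\Z)$ and to $\pi_1(X^{\mathrm{sm}})$ will be the most technical part of the argument.
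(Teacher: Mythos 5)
Your proposal is correct, and its skeleton is the same as the paper's: reduce to quotient (Du Val) singularities, use a topological obstruction to force the sum of the Milnor numbers (equivalently, of the ADE ranks) to equal $6$, note that only $3A_2$, $A_1A_5$, and $E_6$ qualify, split the $3A_2$ ring explicitly, and invoke \cite{Gurjar} for the remaining two cases --- the paper relies on Gurjar for exactly the same two cases, so there is no gap there. Where you genuinely differ is the cohomological mechanism. The paper works on $X$ itself: a finite graded inclusion gives a finite surjection from $\P^2$ (or a weighted $\P^2$) onto $X$; Lemma~\ref{inj} (via intersection cohomology of varieties with quotient singularities) makes $H^i(X;\C)\to H^i(\P^2;\C)$ injective, so $b_2(X)=1$; then Lemma~\ref{numres} uses the Peters--Steenbrink exact sequence for the minimal resolution to show that the number of exceptional curves equals $b_2(\wtilde X)-1=6$. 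You work instead on the punctured cone: $V\simeq S^5$, a transfer argument (using that the punctured cone $U$ over $X$ is a $\Q$-homology manifold) gives injectivity of $H^*(U;\Q)\to H^*(S^5;\Q)$, and the Gysin sequence of the $\C^*$-bundle $U\to X$ turns $H^2(U;\Q)=0$ into $b_2(X)=1$. Both mechanisms are sound; your version uses only classical topology (transfer and Gysin) and avoids intersection cohomology, while the paper's lemmas are stated for arbitrary surjections onto varieties with quotient singularities, which is what powers its closing remarks on other singular del Pezzos and on higher-dimensional generalizations. Two facts you assert rather than prove: the projection-formula transfer for a finite surjection onto a $\Q$-homology manifold, and the identity $b_2(X)=b_2(\wtilde X)-\sum_i r_i$; the latter is precisely the content of Lemma~\ref{numres} (whose proof needs the tree structure of the Du Val exceptional configurations, since $H^1(E)\neq 0$ for a cycle of rational curves), so it deserves an argument rather than a passing clause. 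Finally, your sketched route to excluding $A_1A_5$ and $E_6$ through a global quotient presentation $X\cong\P^2/\Gamma$ with $|\Gamma|=3$ is not established --- the degree count $9=3|\Gamma|$ presupposes a cover unramified in codimension $1$, which is itself a nontrivial output of Gurjar's analysis, as the paper remarks --- but since you ultimately defer those two cases to \cite{Gurjar}, exactly as the paper does, this speculation does not create a gap.
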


We believe that the techniques used in the proof of this theorem are novel in an algebraic context, and may be of more interest than the precise classification of which singular cubics are finite graded direct summands.

\subsection*{Acknowledgments}
We'd like to thank Anurag Singh, Eamon Quinlan--Gallego, Monica Lewis, Michael Perlman, Jakub Witaszek, Shravan Patankar, and Swaraj Pande for helpful conversations and suggestions regarding this work.
We'd also like to thank Mircea Musta\c{t}\u{a} and Eamon Quinlan--Gallego for thorough feedback on a draft of this article.


\section{Preliminaries}
We begin by reviewing some definitions, as well as establishing a few basic lemmas we will use in what follows.
We will then detail a number of different concepts and techniques we will make use of. We draw on a range of tools from both algebraic geometry and commutative algebra, so we have attempted to include at least a brief description of each.

\subsection{Direct summands, pure subrings, and splittings}

\begin{dfn}
We say a ring inclusion $R\hookrightarrow S$ is:
\begin{itemize}
\item a direct summand, if there is an $R$-linear splitting $\sigma:S\to R$.
\item pure, if $R\otimes_R M \to S\otimes_R M$ is injective for all $R$-modules $M$.
\end{itemize}
A ring $R$ is a direct  summand (respectively, pure subring) of a regular ring if there's a ring inclusion $R\hookrightarrow S$, with $S$ a regular ring, that is a direct summand (respectively, pure).

If $R\hookrightarrow S$ can be taken to be module-finite, we will say that $R$ is a module-finite direct summand of a regular ring, and likewise say $R$ is a graded direct summand if $R$ is homogeneous and $R\hookrightarrow S$ is a graded inclusion into a polynomial ring.
\end{dfn}

Being a direct summand implies being pure. As to the converse, it holds in the case where $S$ is a finite $R$-module, by the following more general theorem:

\begin{lem}{{\cite[Corollary~5.3]{HR}}}
$R\hookrightarrow S$ is pure if and only if $R$ is a direct summand of every finitely generated $R$-submodule of $S$.
\end{lem}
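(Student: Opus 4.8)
The plan is to prove the two implications separately, with the main engine being the fact that $S$ is the filtered union of its finitely generated $R$-submodules together with the fact that tensor product commutes with filtered colimits. Before starting I would note that the phrase ``$R$ is a direct summand of $N$'' presupposes $R\subseteq N$; this is harmless, since any finitely generated submodule can be replaced by $N+R$, which is again finitely generated because $R=R\cdot 1$ is cyclic as a module over itself, and this enlargement does not affect the hypothesis.

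For the direction (splitting $\Rightarrow$ pure), I would take an arbitrary $R$-module $M$ and an element $m\in M$ in the kernel of $M=R\otimes_R M\to S\otimes_R M$, aiming to show $m=0$. Writing $S=\varinjlim_N N$ over the directed system of finitely generated submodules $N\supseteq R$ and using $S\otimes_R M=\varinjlim_N(N\otimes_R M)$, the assumption $1\otimes m=0$ in the colimit forces $m$, viewed via $R\otimes_R M\to N\otimes_R M$, to vanish already in $N\otimes_R M$ for some such $N$. Applying the splitting $\sigma\colon N\to R$ of $R\hookrightarrow N$ after tensoring with $M$ would carry this vanishing element back to $m$, so $m=0$, and since $M$ was arbitrary this is exactly purity.

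For the converse (pure $\Rightarrow$ splitting), given a finitely generated $N$ with $R\subseteq N\subseteq S$, I would first observe that $R\hookrightarrow N$ is itself pure: the injection $M\to S\otimes_R M$ supplied by purity factors through $M\to N\otimes_R M$, forcing the latter to be injective for every $M$. Thus $0\to R\to N\to N/R\to 0$ is pure exact. Since $R$ is Noetherian in our setting, $Q:=N/R$ is finitely presented, and I would invoke the standard characterization of pure exactness: a short exact sequence $0\to A\to B\to C\to 0$ is pure exact if and only if every map from a finitely presented module into $C$ lifts to $B$. Taking the finitely presented module to be $C=Q$ and lifting the identity $Q\to Q$ would produce a section of $N\to Q$, exhibiting $R$ as a direct summand of $N$.

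I expect the main obstacle to be this second direction, specifically the passage from pure exactness to an honest splitting. It rests on two points that deserve care: that $N/R$ is finitely presented, which I obtain from the Noetherian hypothesis but which is genuinely essential (and fails in general), and the equivalence between the tensor definition of purity and the lifting property against finitely presented modules. The first direction, by contrast, should be essentially formal once one exploits that tensoring commutes with filtered colimits.
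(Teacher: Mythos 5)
There is nothing in the paper to compare against here: the lemma is quoted directly from \cite[Corollary~5.3]{HR} and the paper supplies no proof, so your argument must stand on its own --- and it does. Both halves are the standard argument. The direction (summand of every finitely generated submodule $\Rightarrow$ pure), via writing $S=\varinjlim_N N$ over the finitely generated submodules $N\supseteq R$, using that $-\otimes_R M$ commutes with filtered colimits, and then retracting the stage where $1\otimes m$ dies, is complete and requires no finiteness hypothesis on $R$. The converse is also correctly reduced: purity of $R\hookrightarrow S$ restricts to purity of $R\hookrightarrow N$, and a pure exact sequence $0\to R\to N\to Q\to 0$ with $Q$ finitely presented splits, by lifting $\mathrm{id}_Q$ through the pure surjection $N\to Q$; the lift is a section, so $N=R\oplus s(Q)$. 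The one caveat is the point you flag yourself: finite presentation of $Q=N/R$ comes from Noetherianity of $R$, a hypothesis that does not appear in the lemma as displayed in the paper. This is not a genuine gap in context --- Hochster and Roberts prove their Corollary~5.3 in the Noetherian setting, and every ring to which this paper applies the lemma is a finitely generated algebra over a field, hence Noetherian --- but the hypothesis should be stated, since for non-Noetherian $R$ the passage from ``finitely generated'' to ``finitely presented'' quotient is precisely where the argument would fail.
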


The two notions also coincide in the complete case; see \cite[Lemma~1.9]{CGM}. We will deal almost entirely with direct summands in this paper, but many of the results can be restated for pure subrings.

When we have a module-finite ring inclusion $R\hookrightarrow S$ and $S$ is normal, the existence of a splitting is guaranteed, at least in characteristic 0, by the standard well-known fact:

\begin{lem}
Let $R$ be a normal domain, and $S$ any domain, and say $R\hookrightarrow S$ is module-finite. As long as $\rank_R S$ has an inverse in $R$, there is an $R$-linear splitting $\sigma : S\to R$.
\end{lem}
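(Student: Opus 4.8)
The plan is to produce the splitting explicitly as a scaled field trace. Write $K=\mathrm{Frac}(R)$ and $L=\mathrm{Frac}(S)$. Since $R\hookrightarrow S$ is module-finite, $S\otimes_R K$ is the localization of the domain $S$ at the multiplicative set $R\setminus\{0\}$, hence is itself a domain; being moreover finite over the field $K$, it is a field, and as it contains $S$ and lies inside $L$ it must equal $L$. Thus $L/K$ is a finite field extension, and by definition $\rank_R S=\dim_K(S\otimes_R K)=[L:K]=:n$. I would then consider the trace map $\mathrm{Tr}_{L/K}\colon L\to K$, which is $K$-linear, and propose
$$\sigma:=\tfrac{1}{n}\,\mathrm{Tr}_{L/K}\big|_S$$
as the candidate splitting; this is legitimate because $n$ is invertible in $R\subseteq K$ by hypothesis.

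First I would check that $\mathrm{Tr}_{L/K}$ actually carries $S$ into $R$. Any $s\in S$ is integral over $R$, so all of its $K$-conjugates are integral over $R$ as well; since $\mathrm{Tr}_{L/K}(s)$ is (up to sign) a coefficient of the characteristic polynomial of multiplication by $s$ on $L$, it is a sum of elements integral over $R$, hence integral over $R$, and it lies in $K$. Normality of $R$ then forces $\mathrm{Tr}_{L/K}(s)\in R$. Consequently $\sigma$ maps $S$ into $R$ and is visibly $R$-linear. It remains to see that $\sigma$ restricts to the identity on $R$, for which it suffices to check $\sigma(1)=1$: the element $\mathrm{Tr}_{L/K}(1)$ is the trace of the identity endomorphism of the $n$-dimensional $K$-vector space $L$, so $\mathrm{Tr}_{L/K}(1)=n\cdot 1$, giving $\sigma(1)=\tfrac{1}{n}\cdot n=1$, as desired.

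The one point requiring care—and the main obstacle in positive characteristic—is the consistency of the identity $\mathrm{Tr}_{L/K}(1)=n$ with the invertibility of $n$, since an a priori worry is that $\mathrm{Tr}_{L/K}$ could vanish identically. In characteristic $0$ this never happens and the argument is immediate. In characteristic $p$, invertibility of $n$ in $R\subseteq K$ forces $p\nmid n$; since the inseparable degree $[L:K]_{\mathrm{insep}}$ is a power of $p$ dividing $n$, it must equal $1$, so $L/K$ is separable and $\mathrm{Tr}_{L/K}$ is not identically zero. (Had $L/K$ been inseparable, $\mathrm{Tr}_{L/K}(1)=n$ would be divisible by $p$, hence zero and noninvertible, so the hypothesis precisely excludes this degenerate case.) With separability in hand the computation above goes through verbatim, and $\sigma$ is the desired $R$-linear splitting of $R\hookrightarrow S$.
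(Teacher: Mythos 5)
Your proof is correct and takes essentially the same route as the paper's: both use the field trace $\mathrm{Tr}_{\Frac S/\Frac R}$, invoke normality of $R$ to see it carries $S$ into $R$, and divide by $\mathrm{Tr}(1)=\rank_R S$, which is invertible by hypothesis. Your extra verification that invertibility of $\rank_R S$ forces separability in characteristic $p$ (so the trace is not identically zero) is a detail the paper elides, but it is consistent with and completes the same argument.
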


Note that the assumption that $\rank_R S$ is invertible in $R$ is always true if $\Q\subset R$.

\begin{proof}
Let $\sigma_0 : \Frac S \to \Frac R$ be the trace map (i.e., $\sigma_0(s)$ is the trace of the $R$-linear operator ``multiplication-by-$s$'' on $S$).
Normality of $R$ guarantees that if $r\in R$, then $\sigma_0(s)$ is in $R$, rather than just $\Frac R$.
Thus, $\sigma_0$ descends to an $R$-linear map $S\to R$.
Then $\sigma_0(1)$ is trace of the identity map on the $R$-module $S$, so $\sigma_0(1)=\rank_R S$, and as long as $\rank _R S\neq 0$ in $R$  we can divide by it to obtain $\sigma:S\to R$ with $\sigma(1)=1$.
\end{proof}

Thus, in characteristic 0 (or in ``large'' characteristic $p$ compared to $\rank_R S$) just being a module-finite subring of a regular ring implies being a direct summand.

\begin{rem}
A Noetherian domain is called a \emph{splinter} if it is a direct summand of every module-finite extension.
The above lemma says that normal rings containing a field of characteristic 0 are splinters. 
The property of being a splinter is quite subtle in general: the direct summand conjecture, recently settled by  \cite{Andre}, is the assertion that all regular rings are splinters.
The focus of this article is in some sense the inverse question: not whether any module-finite extension of a regular ring splits, but which rings admit a regular ring as a module-finite extension that splits.
\end{rem}

As mentioned in the introduction, direct summands of regular rings satisfy a number of nice properties:
\begin{itemize}
\item They are normal and Cohen--Macaulay \cite{HR}.
\item They have rational singularities in characteristic 0 \cite{Boutot}.
\item They have strongly $F$-regular singularities in characteristic $p$ \cite[Theorem~2.2.(4)]{Smith}.
\item Their local cohomology modules have only finitely many associated primes \cite{NB}.
\item They are $D$-simple \cite[Proposition~3.1]{Smith}; see Section~\ref{diffops}, and admit Bernstein--Sato polynomials \cite{AMHNB,BSp}.
\end{itemize}

\begin{rem}

In general, being a direct summand of a (localization of a) polynomial ring is much stronger than being a direct summand of a regular local ring: for example, any regular local ring is tautologically a direct summand of a regular local ring, but unirationality is a necessary condition to be a subring of a localization of a polynomial ring. So, for example, the local ring of a point on an elliptic curve can't be a subring of a polynomial ring.
In the graded setting, however, we need not worry about this distinction, because of the following elementary observation:

\begin{lem}
Let $S$ be a graded $k$-algebra of dimension $n$, finitely generated over $R_0=k$. If $S$ is regular, then $S\cong k[x_1,\dots,x_n]$ as graded rings, where $\deg x_i=d_i$ for some positive integers $d_i$.
\end{lem}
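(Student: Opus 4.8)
The plan is to exploit the fact that a finitely generated positively graded $k$-algebra with $S_0=k$ has a \emph{unique} homogeneous maximal ideal, the irrelevant ideal $\mathfrak m=\bigoplus_{d\ge 1}S_d$, at which the full Krull dimension of $S$ is realized. Concretely, I would first invoke the standard fact that for such a ring $\dim S$ equals the height of $\mathfrak m$, i.e.\ $\dim S=\dim S_{\mathfrak m}$. Since $S$ is regular by hypothesis, $S_{\mathfrak m}$ is then a regular local ring of dimension $n$, so its embedding dimension $\dim_k \mathfrak m S_{\mathfrak m}/\mathfrak m^2 S_{\mathfrak m}$ equals $n$.

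Next I would translate this back into the graded setting. Because $\mathfrak m/\mathfrak m^2$ is annihilated by $\mathfrak m$, it is a finite-dimensional $k=S/\mathfrak m$-vector space, and localizing at $\mathfrak m$ does not change its $k$-dimension; hence $\dim_k \mathfrak m/\mathfrak m^2=n$ as well. By the graded Nakayama lemma I can then choose homogeneous elements $x_1,\dots,x_n\in\mathfrak m$ whose residues form a $k$-basis of $\mathfrak m/\mathfrak m^2$; these generate $\mathfrak m$ as an ideal and therefore generate $S$ as a $k$-algebra. Setting $d_i=\deg x_i$, note that each $d_i\ge 1$ since $x_i\in\mathfrak m=S_+$.

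The $x_i$ now furnish a graded surjection $\pi\colon k[X_1,\dots,X_n]\to S$ with $X_i\mapsto x_i$ and $\deg X_i=d_i$. To finish I would show $\pi$ is injective by a dimension count: the source is a domain of Krull dimension $n$, independently of the chosen positive weights, and if $\ker\pi$ were nonzero it would contain a nonzero nonunit $f$, whence $\dim S=\dim k[X_1,\dots,X_n]/\ker\pi\le\dim k[X_1,\dots,X_n]/(f)= n-1$ by Krull's principal ideal theorem, contradicting $\dim S=n$. Thus $\ker\pi=0$ and $\pi$ is the desired graded isomorphism.

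I expect the only genuinely delicate points to be the identity $\dim S=\dim S_{\mathfrak m}$ and the passage from the local embedding dimension back to $\dim_k\mathfrak m/\mathfrak m^2$; everything else is formal. Both are standard facts about positively graded algebras over a field (see, e.g., Bruns--Herzog), and it is worth noting that the argument never requires assuming in advance that $S$ is a domain---that conclusion falls out of the final isomorphism.
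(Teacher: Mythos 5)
Your proof is correct and follows essentially the same route as the paper's: use regularity to produce $n$ homogeneous generators of the irrelevant maximal ideal, observe these generate $S$ as a $k$-algebra, and rule out any relation by a Krull-dimension count. You have simply made explicit the standard facts (localization at $\mathfrak m$, graded Nakayama, the principal ideal theorem) that the paper's terser argument takes for granted.
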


\begin{proof}
Since $S$ is regular of dimension $n$ and the assumption that $R_0=k$ implies that the homogeneous ideal is maximal, there are homogeneous generators $x_1,\dots,x_n$ for the homogeneous maximal ideal. These clearly generate $S$ as a $k$-algebra as well, since $R_0$ is just $k$ itself. If there were an algebraic relation among the $x_i$ then $\dim S<n$, so we must have that $S$ is isomorphic to the polynomial ring $k[x_1,\dots,x_n]$.
\end{proof}
\end{rem}

We will also make use of the following:

\begin{lem}
Let $R\hookrightarrow S$ admit an $R$-linear splitting $\sigma_R:S\to R$, and say $S\hookrightarrow T$ admits an $S$-linear splitting $\sigma_S: T\to S$. Then $\sigma_R\circ \sigma_S : T\to R$ is an $R$-linear splitting of $R\hookrightarrow S\hookrightarrow T$.
\end{lem}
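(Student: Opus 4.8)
The plan is to verify directly the two conditions that define an $R$-linear splitting of the composite inclusion $R\hookrightarrow S\hookrightarrow T$: namely that $\sigma_R\circ\sigma_S$ is $R$-linear, and that it restricts to the identity on $R$. Both follow formally from the hypotheses, so the ``proof'' really amounts to unwinding the definitions in the right order.

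First I would check $R$-linearity. By hypothesis $\sigma_R$ is $R$-linear. The map $\sigma_S$ is $S$-linear, and since $R\subseteq S$ it is in particular $R$-linear; hence $\sigma_R\circ\sigma_S$ is a composition of $R$-linear maps and is therefore $R$-linear. Next I would check the splitting property. Fix $r\in R$, regarded as an element of $T$ via the composite inclusion. Since $r$ already lies in $S$ and $\sigma_S$ splits $S\hookrightarrow T$, we have $\sigma_S(r)=r$; and since $\sigma_R$ splits $R\hookrightarrow S$, we then have $\sigma_R(\sigma_S(r))=\sigma_R(r)=r$. Thus $(\sigma_R\circ\sigma_S)\circ\iota=\mathrm{id}_R$, where $\iota\colon R\hookrightarrow T$ is the composite inclusion.

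The only point requiring even momentary care — and it is genuinely minor — is the observation that an $S$-linear map is automatically $R$-linear through the inclusion $R\subseteq S$; this is exactly what guarantees the composite is $R$-linear rather than merely additive. There is no real obstacle here: the statement is a formal consequence of the definition of a splitting, and its role in the paper is presumably to let one build splittings into regular rings by composing a chain of intermediate splittings (which is why it is recorded as a separate lemma).
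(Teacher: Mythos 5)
Your proof is correct and follows the same route as the paper: the paper's one-line proof notes precisely that $\sigma_S$, being $S$-linear, is in particular $R$-linear, and calls the rest clear. You have simply unwound the "clear" part (checking $(\sigma_R\circ\sigma_S)(r)=r$ for $r\in R$), which is a faithful elaboration rather than a different argument.
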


In particular, a direct summand of a direct summand of a regular ring is itself a direct summand of a regular ring.

\begin{proof}
\label{dirsum2}
Since $\sigma_S$ is $S$-linear, it is in particular $R$-linear, and the result is clear.
\end{proof}

We can descend splittings to quotients:

\begin{lem}
\label{splitquot}
Let $R\hookrightarrow S$, with $R$-linear map $\sigma : S\to R$. Let $I\subset R$ be an ideal with $IS\cap R=I$. Then $R/I\hookrightarrow S/IS$ admits an $R/I$-linear map $\bar\sigma:S/IS\to R/I$, defined by $\bar\sigma(\bar f)=\overline{\sigma(f)}$ for $\bar f \in S/IS$. If $\sigma$ is a splitting, so is $\overline \sigma$.
\end{lem}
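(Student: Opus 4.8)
The plan is to verify in turn the four things the statement requires: that $R/I\hookrightarrow S/IS$ is genuinely an inclusion, that the formula $\bar\sigma(\bar f)=\overline{\sigma(f)}$ defines a well-defined map, that this map is $R/I$-linear, and finally that it inherits the splitting property from $\sigma$. Each of these is a direct computation, and the one place the hypothesis $IS\cap R=I$ really does any work is the first point; the rest is forced by the $R$-linearity of $\sigma$.

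First I would observe that the natural map $R/I\to S/IS$ has kernel $(IS\cap R)/I$, so the assumption $IS\cap R=I$ is precisely what makes this kernel zero and hence $R/I\hookrightarrow S/IS$ injective. This is the only spot where that hypothesis is needed, so it is worth isolating at the outset.

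Next, to see that $\bar\sigma$ is well-defined, I would take $\bar f=\bar g$, i.e.\ $f-g\in IS$, and write $f-g=\sum_i a_i s_i$ with $a_i\in I$ and $s_i\in S$. Then $R$-linearity of $\sigma$ gives $\sigma(f-g)=\sum_i a_i\sigma(s_i)$, which lies in $I$ since each $a_i\in I$ and each $\sigma(s_i)\in R$; hence $\overline{\sigma(f)}=\overline{\sigma(g)}$ in $R/I$ and the formula is independent of the representative chosen. The $R/I$-linearity then follows at once: for $r\in R$ and $f\in S$ we have $\sigma(rf)=r\sigma(f)$, and reducing modulo $I$ yields $\bar\sigma(\bar r\,\bar f)=\bar r\,\bar\sigma(\bar f)$.

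Finally, if $\sigma$ is a splitting, meaning $\sigma(r)=r$ for all $r\in R$, then for the image $\bar r$ of $r$ in $R/I\subset S/IS$ we get $\bar\sigma(\bar r)=\overline{\sigma(r)}=\bar r$, so $\bar\sigma$ restricts to the identity on $R/I$ and is therefore a splitting of $R/I\hookrightarrow S/IS$. I do not anticipate any genuine obstacle: the content of the lemma is essentially bookkeeping, with the single subtle point being that the inclusion of quotients only makes sense under the stated contraction condition $IS\cap R=I$, which is exactly what guarantees the target is a ring into which $R/I$ embeds.
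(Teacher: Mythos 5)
Your proof is correct and takes essentially the same approach as the paper's: the hypothesis $IS\cap R=I$ gives injectivity of $R/I\to S/IS$, the $R$-linearity of $\sigma$ forces $\sigma(IS)\subseteq I$ and hence well-definedness, and the splitting property descends immediately. If anything, your handling of a general element of $IS$ as a finite sum $\sum_i a_i s_i$ with $a_i\in I$ is slightly more careful than the paper's argument, which writes such an element as a single product.
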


The condition $IS\cap R=I$ is satisfied for pure ring maps.

\begin{proof}
First, note that the ring map $R/I\to S/IS$ is injective exactly when $IS\cap R=I$.
If $f=f'+g$ for $g\in IS$, then we can write $g=sg_0$ for $s\in S$, $g_0\in R$. Then $\sigma(f)=\sigma(f')+\sigma(sg_0)=
\sigma(f')+g_0\sigma(s)$. Thus, $\sigma(f)=\sigma(f') \mod I$, so $\bar \sigma$ is well-defined. The $R/I$-linearity of $\bar \sigma$ is clear from definition.
Note that if 
$\sigma(1)=1$
(i.e., $\sigma$ is a splitting), then the same is true for $\bar \sigma$.
\end{proof}

\subsection{Characteristic 0 singularities of module-finite direct summands}
We can obtain stronger conditions on the singularities of a \emph{module-finite} direct summand in characteristic 0.
The following statement is known to experts, but we have been unable to find a reference in the literature:

\begin{lem}
\label{klt}
Let $R$ be a  $\C$-algebra, essentially of finite type over $\C$, and assume $R$ is a module-finite subring of a regular ring $S$ such that $R$ is $\Q$-Gorenstein (e.g., if $S$ is a polynomial ring or regular local ring). Then $R$ has klt singularities.
\end{lem}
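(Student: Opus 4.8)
The plan is to use the fact that $R$ is a module-finite direct summand (via Lemma 2.2, in characteristic 0 a module-finite normal subring splits) of a regular ring $S$, together with the theory of the ramification/trace map and the behavior of canonical divisors under finite morphisms. The key geometric input is that a finite morphism $\operatorname{Spec} S \to \operatorname{Spec} R$ from a regular (hence klt, indeed canonical) source, combined with the splitting, should force $R$ to be klt. I would phrase this either in terms of the work of Boutot--type arguments or, more concretely, via the pullback formula for discrepancies under finite covers. Since $R$ is $\mathbb{Q}$-Gorenstein by hypothesis, the canonical divisor $K_R$ is $\mathbb{Q}$-Cartier, so discrepancies are well-defined and it makes sense to ask whether $R$ is klt.

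**First I would** set up the finite morphism $f:\operatorname{Spec} S\to\operatorname{Spec} R=:X$, noting $S$ regular means $\operatorname{Spec} S$ is smooth, in particular klt. The standard ramification formula reads $K_S = f^*K_R + R_f$ where $R_f$ is the (effective) ramification divisor; this requires $R$ to be $\mathbb{Q}$-Gorenstein so that $f^*K_R$ makes sense, which is exactly our hypothesis. The crucial point is that Boutot's theorem (or its local/finite analogue) guarantees that a direct summand of a ring with rational — indeed, of a regular ring — has rational singularities; combined with $\mathbb{Q}$-Gorenstein, rational singularities are equivalent to klt for $\mathbb{Q}$-Gorenstein varieties (by Elkik--Flenner and Kempf, rational $+$ Gorenstein $\Rightarrow$ canonical, and more generally rational $+$ $\mathbb{Q}$-Gorenstein $\Rightarrow$ klt). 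So one route is simply: regular $\Rightarrow$ $R$ is a direct summand (Lemma 2.2) $\Rightarrow$ $R$ has rational singularities (Boutot) $\Rightarrow$ $R$ is klt since it is $\mathbb{Q}$-Gorenstein.

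**Alternatively**, I would give a more self-contained discrepancy argument avoiding the rational-implies-klt equivalence: take a log resolution $\pi:Y\to X$ and compare discrepancies through the finite cover. One pulls back a resolution compatibly, uses that $S$ is smooth so all discrepancies upstairs are $\geq 0$ (in fact this is the canonical/terminal nature of a smooth variety), and then uses the splitting to descend positivity of discrepancies downstairs. The presence of the $R$-linear splitting $\sigma:S\to R$ is what lets one control $R$ rather than merely bound things from one side; concretely, the splitting forces the trace/pushforward of the structure sheaf to contain $\mathcal{O}_X$ as a summand, which translates into the inequality $a(E;X)>-1$ for every exceptional divisor $E$, i.e.\ the klt condition.

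**The hard part will be** making the ramification/discrepancy comparison rigorous in the possibly-wildly-ramified or non-Galois setting: the clean formula $K_S=f^*K_R+R_f$ with $R_f$ effective is transparent when $f$ is separable (automatic in characteristic 0), but one must be careful that $f^*$ of a $\mathbb{Q}$-Cartier divisor interacts correctly with discrepancies, and that the splitting genuinely yields the \emph{strict} inequality needed for klt rather than merely log canonical. I expect the cleanest writeup simply invokes Boutot's theorem plus the standard equivalence (rational $+$ $\mathbb{Q}$-Gorenstein $\Leftrightarrow$ klt) to dispatch the result in a line or two, reserving the discrepancy computation only if a more elementary or characteristic-free-flavored argument is desired.
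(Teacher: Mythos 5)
Your first (and preferred) route has a genuine gap: the implication ``rational $+$ $\Q$-Gorenstein $\Rightarrow$ klt'' that you invoke to finish is false. What is true is Elkik's theorem that a \emph{Gorenstein} rational singularity is canonical; this does not extend to higher index, because the index-one cover of a rational $\Q$-Gorenstein singularity need not itself have rational singularities. Counterexamples already exist in dimension two: every rational surface singularity is automatically $\Q$-Gorenstein (its local class group is finite, by Lipman), but klt surface singularities are exactly the quotient singularities, and there are many rational surface singularities that are not quotient singularities --- for instance, the singularity whose minimal resolution has dual graph a star with a central $(-3)$-curve and four $(-2)$-arms is rational by Artin's criterion, yet not klt, since quotient-singularity graphs are chains or three-armed stars. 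So Boutot's theorem, which does correctly give that $R$ has rational singularities, cannot by itself yield klt; the lemma genuinely requires more than rationality.

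Your alternative route, by contrast, can be made rigorous, but not by the mechanism you describe: the splitting is irrelevant to it. In characteristic $0$ the finite surjection $f:\Spec S\to \Spec R$ alone suffices. Write $K_S=f^*K_R+R_f$ with $R_f$ the (effective, by separability) ramification divisor; then the pair $(\Spec S,\,-R_f)$ has all discrepancies $>-1$, since $\Spec S$ is smooth and $-R_f\leq 0$, and the discrepancy formula for finite morphisms (Koll\'ar--Mori, Proposition~5.20: $a(E';S,-R_f)+1=r\,\bigl(a(E;R)+1\bigr)$, where $r$ is the ramification index) forces every discrepancy of $R$ to be $>-1$, i.e.\ $R$ is klt. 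Your proposed step --- that the splitting ``translates into $a(E;X)>-1$'' --- is not an argument as stated, and is not needed. Note also that this geometric proof is genuinely different from the paper's, which never leaves commutative algebra: the paper spreads out the splitting to a finitely generated $\Z$-algebra, reduces modulo $p$, cites Smith's theorem that a direct summand of a regular ring in characteristic $p$ is strongly $F$-regular, and concludes by Hara--Watanabe that a $\Q$-Gorenstein ring of strongly $F$-regular type is klt. The geometric argument is shorter and shows the splitting hypothesis is superfluous in characteristic $0$; the paper's argument is the one that connects to the $F$-singularity and FFRT techniques used elsewhere in the paper.
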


The condition on $R$ being $\Q$-Gorenstein is automatic if $S$ is a polynomial ring or a regular local ring, since then the class group of $R$ is torsion (see Corollary~\ref{torsClass}).

\begin{proof}
The proof is a standard reduction to characteristic $p$, so we sketch the main idea:
Assume $R\subset S$, and let $\sigma \in \Hom_R(S,R)$ be a splitting. Because $S$ is a finite $R$-module, $\Hom_R(S,R)$ is as well. There is thus a finitely generated $\Z$-algebra $A$, $A$-algebras $R_A\subset S_A$, and $\sigma_A \in \Hom_{R_A}(S_A,R_A)$ such that 
$R_A\otimes_A \C=R$, 
$S_A\otimes_A \C=S$, and
$\sigma_A\otimes_A \C=\sigma$, and an open dense set $U\subset \Spec A$ such that for all $p\in U$ we have an inclusion $$R_p:=R_A\otimes_A A_p/pA_p \hookrightarrow S_p:=
S_A\otimes_A A_p/pA_p $$
and a splitting $\sigma_p:= \sigma_A\otimes_A A_p/pA_p : S_p\to R_p$. We can moreover assume that $S_p$ is regular for all $p\in U$. The residue fields $A_p/pA_P$ are finite fields, and because $R_p$ is a direct summand of $S_p$, $R_p$ must be strongly $F$-regular by \cite[Theorem~2.2.(4)]{Smith}. 
Thus, we have that $R$ is a $\Q$-Gorenstein ring of strongly $F$-regular type, and thus klt by \cite[Theorem~3.3]{HW}.
\end{proof}

\subsection{Class groups}
Let $R$ be a normal domain. In this section, we briefly recall the class group of $R$, and its behavior under finite ring extensions.

\begin{dfn}
The class group of $R$, denoted $\Cl(R)$, is the quotient of the free abelian group generated by height-1 prime ideals of $R$, modulo the principal divisors.
\end{dfn}

$\Cl(R)=0$ if and only if $R$ is a UFD. In particular, regular local rings are UFDs and thus have trivial class groups.

If $R\hookrightarrow S$ is a module-finite extension of normal domains, there are maps $i:\Cl(R)\to \Cl(S)$ and $j:\Cl(S)\to \Cl(R)$, defined as follows:
\begin{itemize}
\item $i: \Cl(R)\to \Cl(S)$: for $q$ a height-1 prime in $R$, let $Q_1,\dots,Q_r$ be the primes of $S$ lying over $q$, and note that $\Ht Q_i=1$. Let $R_q\to S_{Q_i}$ be the localized ring extension, and let $e_{Q_i} =  \l(S_{Q_i}/qS_{Q_i})$ be the ramification index at $Q_i$.
 Then, we define $$i([q]) := \sum e_{Q_i}[Q_i]\in \Cl(S).$$
\item $j:\Cl(S)\to \Cl(R)$: let $Q$ be a height-1 prime of $S$, and let $q=Q\cap R$. Note that $\Ht q=1$. Write $k(Q)=S_Q/QS_Q$ for the residue field of $Q$, and likewise for $k(q)$, and set 
$$
j([Q])= [k(Q):k(q)] [q].
$$
\end{itemize}

With these definitions, we then have (see, e.g., \cite[Section~6]{Samuel}):

\begin{prop}
Let $[\Frac S:\Frac R] = N$. Then
$j\circ i :\Cl(R)\to \Cl(R)$ is multiplication by $ N$.
\end{prop}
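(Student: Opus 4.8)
The plan is to reduce the claim to the classical fundamental identity $\sum_i e_i f_i = N$ for a finite extension of a discrete valuation ring. First I would unwind the definitions of $i$ and $j$. Given a height-$1$ prime $q$ of $R$ with primes $Q_1,\dots,Q_r$ of $S$ lying over it, writing $f_{Q_i} := [k(Q_i):k(q)]$ for the residue degrees, we compute
\[
j\bigl(i([q])\bigr) = j\Bigl(\sum_i e_{Q_i}[Q_i]\Bigr) = \sum_i e_{Q_i}\,[k(Q_i):k(q)]\,[q] = \Bigl(\sum_i e_{Q_i} f_{Q_i}\Bigr)[q].
\]
Thus the proposition is equivalent to the purely local statement $\sum_i e_{Q_i} f_{Q_i} = N$, which it suffices to verify for a single fixed $q$.

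To prove this local identity, I would localize at $q$. Set $A := R_q$, a normal Noetherian local domain of dimension $1$, hence a DVR with maximal ideal $\mathfrak m = qA$ and residue field $k(q)$. Let $B := S\otimes_R R_q$; since $S$ is module-finite over $R$, $B$ is module-finite over $A$, and as a localization of the normal domain $S$ it is again a normal domain with fraction field $\Frac S$. Its maximal ideals are exactly the localizations of $Q_1,\dots,Q_r$, and neither the ramification indices $e_{Q_i}=\l(S_{Q_i}/qS_{Q_i})$ nor the residue degrees $f_{Q_i}$ change upon passing to $B$, since $B_{Q_i}=S_{Q_i}$. Here I use that the $Q_i$ have height $1$, which follows from going-down for the finite extension of normal domains $R\hookrightarrow S$.

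Now I would compute $\dim_{k(q)} B/\mathfrak m B$ in two ways. On one hand, $B$ is a finitely generated torsion-free module over the PID $A$, hence free, and its rank equals $[\Frac S:\Frac R] = N$; therefore $B/\mathfrak m B = B\otimes_A k(q)$ has $k(q)$-dimension $N$. On the other hand, since $B$ is semilocal with maximal ideals $Q_1 B,\dots,Q_r B$, the Chinese remainder theorem gives $B/qB \cong \prod_i B_{Q_i}/qB_{Q_i}$; filtering each factor by powers of $Q_i$, using that $qB_{Q_i}=Q_i^{e_{Q_i}}B_{Q_i}$ and that each successive quotient $Q_i^{j}/Q_i^{j+1}$ is a one-dimensional $k(Q_i)$-vector space, shows $\dim_{k(q)} B_{Q_i}/qB_{Q_i} = e_{Q_i} f_{Q_i}$. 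Summing yields $\sum_i e_{Q_i} f_{Q_i} = N$, exactly as needed.

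The computation is essentially routine once it is set up; there is no deep input beyond the structure theory of modules over a DVR. I expect the only points requiring care to be the reduction to the local ring—verifying that the $Q_i$ are height-$1$ primes, that the invariants $e_{Q_i}$ and $f_{Q_i}$ are unchanged by localization, and that $B$ is free of rank precisely $N$ over $A$—together with the bookkeeping ensuring that the two dimension counts are genuinely computing the same $k(q)$-vector space $B/\mathfrak m B$.
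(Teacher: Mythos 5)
Your proof is correct. The paper itself gives no argument for this proposition, deferring instead to Samuel (the cited reference), and your reduction to the local identity $\sum_i e_{Q_i} f_{Q_i} = N$ over the DVR $R_q$ — proved by counting $\dim_{k(q)} B/\mathfrak{m}B$ once via freeness of rank $N$ and once via the Artinian decomposition into the $B_{Q_i}/qB_{Q_i}$ — is precisely the classical argument found there, with the necessary care taken on the points that matter (height of the $Q_i$ via going-down, invariance of $e$ and $f$ under localization, and module-finiteness ensuring the rank count gives equality rather than an inequality).
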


This immediately implies the following.

\begin{cor}
\label{torsClass}
If $R$ is a module-finite subring of a UFD $S$, then $\Cl(R)$ is torsion.
\end{cor}

In our context, $S$ will be a polynomial ring or a regular local ring and thus automatically a UFD.

\subsection{Linearly reductive groups}

One source of direct summands of polynomial rings, especially in characteristic 0, is linearly reductive groups.

\begin{dfn}
An algebraic group $G$ is called linearly reductive if every finite-dimensional $G$-representation splits into a direct summand of \emph{irreducible} $G$-representations. 
\end{dfn}

\begin{rem}

If $G$ acts on an $n$-dimensional vector space $V$,  there is an induced action of $G$  on the polynomial ring $\Sym V=k[x_1,\dots,x_n]=S$. If $G$ is linearly reductive, the invariant subring $R:=S^G$ is a direct summand of $S$.
That is, rings of invariants of linearly reductive groups are direct summands of regular rings. 
\end{rem}

In characteristic 0, finite groups are linearly reductive, as are tori and the familiar matrix groups (e.g., $\GL_n,\SL_n,O_N$, and so on). In what follows, we will use only that finite groups, tori, and $\GL_n$ are linearly reductive.

In characteristic $p$, finite groups are linearly reductive as long as the characteristic $p$ doesn't divide the order of the group. Tori are also always linearly reductive in characteristic $p$.
However, these are essentially the only examples: by \cite{Nagata} linearly reductive groups in positive characteristic are extensions of tori by finite groups with order not divisible by the characteristic.

\begin{rem}
In positive characteristic, many familiar rings (e.g., determinantal rings, Grassmannians, etc.) are still rings of invariants, exactly as they are in the characteristic-0 setting. However, what is missing in this case is the existence of a splitting (i.e., in the context of invariant theory, the Reynolds operator). Without the existence of the splitting guaranteed by linear reductivity, it is quite hard to determine whether these invariant rings are direct sums of polynomial rings.

For example, forthcoming work of\cite{HJPS} examines the invariant rings of classical determinantal groups (e.g., $\GL_n$, the symplectic group, etc.) over a field of positive characteristic. They show that the inclusion of this invariant ring essentially never splits (or is even pure).
This does not say that there is no \emph{other} embedding of the invariant ring making it a direct summand, but just that the most ``natural'' choice does not work.

Similarly, \cite{JS} showed that these classical invariant rings over $\Z$ are essentially never direct summands of \emph{any} regular ring, through a study of their differential operators and reduction modulo~$p$.
\end{rem}

To emphasize the ubiquity of rings of invariants among direct summands, we mention that the following question is (to our knowledge) open:

\begin{quest}
In characteristic 0, is there a direct summand of a regular ring $R\hookrightarrow S$ that cannot be written as  the ring of invariants of a linearly reductive group acting on a polynomial ring $T$? 
\end{quest}

Even in the case where the direct summand is assumed to be finite, it is unclear how to produce a group action yielding the ring as a ring of invariants, or how to show that no such action can exist.

\subsection{Differential operators}
\label{diffops}
We briefly recall the notion of differential operators and their behavior under direct summands.

\begin{dfn}
Let $R$ be a $k$-algebra. We define $D^m_{R/k}\subset \End_k(R)$, the $k$-linear differential operators of order $m$ inductively as follows:
\begin{itemize}
\item $D^0_{R/k} =\Hom_R(R,R)\cong R$, thought of as multiplication by $R$. 
\item  $\delta \in \End_k(R)$ is in $D^m_{R/k}$ if $[\delta, r] \in D^{m-1}_{R/k}$ for any $r\in D^0_{R/k}$.
\end{itemize}
We write $D_{R/k}=\bigcup D^m_{R/k}$. $D_{R/k}$ is a noncommutative ring, and $R$ is a left $D_{R/k}$-module.
\end{dfn}

When $R$ is not a smooth $k$-algebra, the $D_{R/k}$-module structure of $R$ can be quite complicated. We will in particular make use of the notion of $D$-simplicity:

\begin{dfn}
A $k$-algebra $R$ is called $D$-simple if $R$ is a simple $D_{R/k}$-module.
\end{dfn}

Polynomial rings (and, more generally, regular rings) are $D$-simple.

Although an arbitrary singular ring $R$ may not be $D$-simple, the situation is much better for direct summands of regular rings. The following lemma and corollary are due to 
\cite[Proposition~3.1]{Smith}:

\begin{lem}
Let $R\to S$ be an inclusion of $k$-algebras, with $R$-linear splitting $\sigma : S\to R$.
For any differential operator $\delta$ on $S$, $\sigma \circ \delta\res R$ is a differential operator on $R$. 
\end{lem}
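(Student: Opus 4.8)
The plan is to induct on the order $m$ of the operator $\delta\in D^m_{S/k}$, proving at each stage that $\sigma\circ\delta\res R$ lies in $D^m_{R/k}$. The one ingredient doing all the work is the $R$-linearity of $\sigma$, which lets any element of $R$ be pulled through $\sigma$.

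For the base case $m=0$, an operator $\delta\in D^0_{S/k}$ is multiplication by some $s\in S$, so for $r\in R$ we get $(\sigma\circ\delta\res R)(r)=\sigma(sr)=\sigma(s)\,r$ by $R$-linearity. Hence $\sigma\circ\delta\res R$ is multiplication by $\sigma(s)\in R$ and lies in $D^0_{R/k}$.

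For the inductive step, assume the statement for operators of order $\le m-1$ and take $\delta\in D^m_{S/k}$. To show $\sigma\circ\delta\res R\in D^m_{R/k}$ I must check that $[\sigma\circ\delta\res R,\,r]\in D^{m-1}_{R/k}$ for every $r\in R$. The heart of the argument is the identity
\[
[\sigma\circ\delta\res R,\,r]=\sigma\circ[\delta,r]\res R,
\]
which I would establish by evaluating both sides at an arbitrary $a\in R$: writing $\delta(ra)=[\delta,r](a)+r\,\delta(a)$ in $S$ and applying $\sigma$, the $R$-linearity of $\sigma$ converts $\sigma(r\,\delta(a))$ into $r\,\sigma(\delta(a))$, which cancels against the term coming from $r\circ(\sigma\circ\delta\res R)$, leaving exactly $\sigma([\delta,r](a))$. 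Because $\delta\in D^m_{S/k}$ and $r\in R\subset S$, the operator $[\delta,r]$ lies in $D^{m-1}_{S/k}$, so the inductive hypothesis gives $\sigma\circ[\delta,r]\res R\in D^{m-1}_{R/k}$; by the displayed identity this is precisely the commutator we needed to control.

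I do not expect any genuine obstacle here: once the commutator identity is in hand the argument is purely formal. The only point requiring care is the bookkeeping with restrictions---noting that $[\delta,r]$ is again an operator on $S$, so that $\sigma\circ[\delta,r]\res R$ has exactly the shape to which the inductive hypothesis applies---and recognizing that every cancellation in the computation is forced by, and only by, the $R$-linearity of the splitting $\sigma$.
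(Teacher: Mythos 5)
Your proof is correct and is essentially the standard argument, the same one given in the reference the paper cites for this lemma (\cite[Proposition~3.1]{Smith}, the paper itself reproducing no proof): induction on the order of $\delta$, with the base case handled by $R$-linearity of $\sigma$ and the inductive step by the identity $[\sigma\circ\delta|_R,\,r]=\sigma\circ[\delta,r]|_R$, which holds precisely because $\sigma(r\,\delta(a))=r\,\sigma(\delta(a))$. The only (trivial) point left implicit is that $\sigma\circ\delta|_R$ is $k$-linear, so that it lies in $\End_k(R)$ to begin with.
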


If $S$ is $D$-simple, then any element $s\in S$ can be sent to 1 by a differential operator,  and we can use thus send any element of $R$ to $1$, yielding: 

\begin{cor}
\label{dsimpsum}
If $R$ is a direct summand of a $D$-simple ring, $R$ is $D$-simple.
\end{cor}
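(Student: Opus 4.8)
The plan is to reduce the statement to the single-element characterization of $D$-simplicity and then transport the operator witnessing simplicity of $S$ back down to $R$ using the preceding lemma. First I would unwind what $D$-simplicity means as a statement about cyclic generation: a nonzero $k$-algebra $A$ is $D_{A/k}$-simple precisely when $D_{A/k}\cdot a = A$ for every nonzero $a\in A$. Since $D^0_{A/k}$ is just multiplication by $A$, the submodule $D_{A/k}\cdot a$ equals $A$ as soon as it contains the unit $1$; thus $D$-simplicity is equivalent to the assertion that every nonzero element can be carried to $1$ by some differential operator. This reformulation is what makes the statement tractable, and is exactly the phrasing anticipated in the remark preceding the corollary.

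Next I would take an arbitrary nonzero $r\in R$ and view it as a nonzero element of $S$ via the inclusion $R\hookrightarrow S$. Because $S$ is $D$-simple, the reformulation above produces a differential operator $\delta\in D_{S/k}$ with $\delta(r)=1$. The key move is then to push this operator down: by the preceding lemma, $\theta:=\sigma\circ\delta\res R$ is a differential operator on $R$, where $\sigma$ is the given splitting. Evaluating on $r$ gives $\theta(r)=\sigma(\delta(r))=\sigma(1)=1$, where the last equality is exactly the condition $\sigma(1)=1$ defining a splitting.

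Having produced, for each nonzero $r\in R$, a differential operator $\theta\in D_{R/k}$ with $\theta(r)=1$, I would conclude as follows: the submodule $D_{R/k}\cdot r$ now contains $1$, hence contains $D^0_{R/k}\cdot 1 = R\cdot 1 = R$, so it is all of $R$. Since $r$ was arbitrary, $R$ is a simple $D_{R/k}$-module, i.e., $D$-simple. I do not anticipate any serious obstacle: the entire content is already packaged in the preceding lemma (that $\sigma\circ\delta\res R$ is a differential operator) together with the splitting property $\sigma(1)=1$. The only point requiring a moment's care is the equivalence between module-theoretic simplicity and the ``send-to-$1$'' formulation, which hinges on the observation that $D^0_{R/k}$ contains all multiplications by elements of $R$; once that is noted, the argument is immediate.
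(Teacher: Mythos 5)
Your proof is correct and is essentially the paper's own argument: the paper compresses it into the sentence preceding the corollary, namely that $D$-simplicity of $S$ lets any element be sent to $1$ by an operator, and then $\sigma\circ\delta\res R$ (a differential operator on $R$ by the preceding lemma) sends a nonzero $r\in R$ to $\sigma(1)=1$. Your write-up just makes explicit the equivalence between simplicity of $R$ as a $D_{R/k}$-module and the ``send-to-$1$'' criterion, which the paper takes for granted.
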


\subsection{Finite $F$-representation type}
Though most of the paper is concerned with the characteristic 0 setting, in Section~\ref{delPezzo} our results will have ramifications in positive characteristic. We thus recall the necessary notion of finite $F$-representation type (or FFRT). 

\begin{dfn}
Let $R$ be a ring of characteristic $p>0$, and assume that $R$ is $F$-finite (i.e. the Frobenius morphism is finite). For $e>0$, we denote $F^e$ the $e$-th iterated Frobenius, and write $F^e_* R$ for the $R$-module obtained by restriction of scalars along $F^e:R\to R$. 
We say that $R$ has finite $F$-representation type (or FFRT) if there are finitely many finitely generated $R$-modules $M_1,\dots,M_N$ such that for any $e$ we can write
$$
F_*^e R \cong M_1^{\oplus a_{e,1}}\oplus \dots\oplus
M_N^{\oplus a_{e,N}}
$$
(as $R$-modules)
for some nonnegative $a_{e,i}$. 
That is, $R$ has FFRT if there are only finitely many modules occurring in an irreducible $R$-module decompositions of each $F_*^e R$.

We say that a graded ring $R$ has \emph{graded} FFRT if there are
finitely many finitely generated $\Q$-graded $R$-modules $M_1,\dots,M_N$ such that for any $e$ we can write
$$
F_*^e R \cong 
\bigoplus_{i=1}^N
\bigoplus_{j=1,\dots,n_{e,i}}
M_i^{a_{e,i,j}}(\b_{e,i,j})
$$
for some  $\b_{e,i,j}\in \Q$ and nonnegative $a_{e,i,j}$, where $M(\b)$ denotes the shift of the $\Q$-graded module $M$ by $\b$.
That is, only finitely many graded modules occur, up to shifting the grading, in 
an irreducible decomposition of the $F_*^e R$.

\end{dfn}

It is quite hard to calculate whether a ring satisfies FFRT in general, and FFRT is a very strong property. For example, a ring $R$ with FFRT will have only finitely many associated primes of the local cohomologies $H^i_I(R)$, for any ideal $I\subset R$ (see \cite{TT,HNB,DQ}).

The FFRT property behaves well with respect to direct summands, at least in the graded case:

\begin{prop}[{\cite[Proposition~3.1.6]{SVdB}}]
Let $R\hookrightarrow S$ be an inclusion of $F$-finite graded rings, with $R_0=S_0=k$ a field of characteristic $p$. Assume that $R\hookrightarrow S$ splits (or even if $F_*^e R \hookrightarrow F_*^eS$ splits for some $e$). Then $S$ having graded FFRT implies that $R$ does also.
\end{prop}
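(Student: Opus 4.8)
The plan is to reduce the statement to a question about graded direct summands of finitely generated graded $R$-modules, and then to apply the graded Krull--Schmidt theorem (uniqueness of the decomposition into graded-indecomposable modules, up to isomorphism and degree shift, which holds for finitely generated graded modules over an $\N$- or $\Q$-graded $k$-algebra with degree-zero piece $k$). There are two ingredients: (i) functoriality of the restriction-of-scalars functor $F_*^e$ turns the given ring splitting into a splitting of $F_*^e R$ off $F_*^e S$ \emph{as graded $R$-modules}, and (ii) restricting the FFRT decomposition of $F_*^e S$ from $S$ to $R$ produces only finitely many graded-indecomposable $R$-modules up to shift.

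First I would establish (i). If $\sigma : S\to R$ is the graded $R$-linear splitting, then because $r^{p^e}\in R$ for every $r\in R$, the very same map $\sigma$ is $R$-linear when regarded as a map $F_*^e S\to F_*^e R$, and it still splits $F_*^e R\hookrightarrow F_*^e S$; thus we obtain a graded $R$-module decomposition $F_*^e S\cong F_*^e R\oplus C_e$ for every $e$. To accommodate the weaker hypothesis that only $F_*^{e_0} R\hookrightarrow F_*^{e_0} S$ splits for a single $e_0$, I would apply the additive functor $F_*^{e'}$ to this splitting: since $F_*^{e'}F_*^{e_0}M=F_*^{e'+e_0}M$, the same conclusion follows for all $e\ge e_0$, and the finitely many exceptional values $e<e_0$ contribute only finitely many extra indecomposable summands, which is harmless for FFRT.

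Next I would carry out (ii). Graded FFRT of $S$ gives finitely many finitely generated graded $S$-modules $M_1,\dots,M_t$ such that each $F_*^e S$ is, as a graded $S$-module, a direct sum of shifts of the $M_i$. Restricting scalars along $R\hookrightarrow S$, each $M_i$ becomes a graded $R$-module, and I would decompose it into graded-indecomposable $R$-modules; collecting these over all $i$ gives a finite list $P_1,\dots,P_s$ of graded-indecomposable $R$-modules (up to shift) such that every $F_*^e S$ is a direct sum of shifts of the $P_j$. Now $F_*^e R$ is a graded $R$-module direct summand of $F_*^e S$ by (i), and it is finitely generated over $R$ because $R$ is $F$-finite; hence by graded Krull--Schmidt its indecomposable constituents lie among $P_1,\dots,P_s$, up to shift. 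Thus this finite family (together with the indecomposables of the finitely many exceptional $F_*^e R$) witnesses graded FFRT of $R$.

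The main obstacle is step (ii), specifically controlling the restriction of scalars: the argument needs the finitely many $S$-building-blocks $M_i$, viewed as $R$-modules, to split into only finitely many graded-indecomposable $R$-modules up to shift. This is clean when $R\hookrightarrow S$ is module-finite, so that each $M_i$ is automatically finitely generated over $R$ and has a finite indecomposable decomposition; the essential tool making the transfer go through is the graded Krull--Schmidt theorem, which supplies both the existence and the uniqueness needed to pass the finite list of building blocks from $S$ to its summand $R$, together with careful bookkeeping of the $\Q$-grading shifts $\b$ throughout. By contrast, the passage in (i) from a ring-level splitting to an $R$-module splitting after applying $F_*^e$ is routine once one notices the $p^e$-th-power trick, so essentially all of the content is concentrated in the Krull--Schmidt step.
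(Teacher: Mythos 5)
Your step (i) is correct and is indeed the routine part: $F_*^e$ is just restriction of scalars along $F^e$, so the same map $\sigma$ gives a graded $R$-linear splitting $F_*^e S\to F_*^e R$, and your reduction of the weaker hypothesis (splitting at a single $e_0$) to all $e\geq e_0$ plus finitely many exceptional $e$ is fine. The genuine gap is exactly where you locate it, in step (ii), and it is not a removable bookkeeping issue: the proposition does \emph{not} assume $R\hookrightarrow S$ is module-finite, and the paper emphasizes this immediately after the statement precisely because its application is non-finite. In Section~\ref{delPezzo} the anticanonical coordinate ring of the quintic del Pezzo (a $3$-dimensional ring) is realized as a torus-invariant direct summand of $R(X_5)[t]$, the Cox ring with a variable adjoined (an $8$-dimensional ring); for such an inclusion each $M_i$, restricted to $R$, is \emph{not} a finitely generated $R$-module, need not admit any decomposition into finitely generated graded indecomposables (let alone finitely many up to shift), and the graded Krull--Schmidt theorem you invoke---both to decompose $M_i|_R$ and to compare decompositions---is unavailable for such modules. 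So your argument establishes only the module-finite case, which is exactly the case that does not suffice for the paper. (For comparison: the paper itself offers no proof, quoting \cite[Proposition~3.1.6]{SVdB}; the burden of the non-finite case is carried there.)

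The missing idea that repairs your outline is a truncation-plus-uniformity argument that replaces the ill-behaved modules $M_i|_R$ by honest finitely generated ones. One shows: (a) the generating degrees of $F_*^e R$ as a graded $R$-module are bounded by a constant $C$ independent of $e$ (in the $\frac{1}{p^e}\Z$-grading; e.g.\ because $F_*^e R/R_+F_*^e R = F_*^e(R/R_+^{[p^e]})$ lives in uniformly bounded degrees), and (b) the shifts $\beta_{e,i,j}$ occurring in the FFRT decompositions of $F_*^e S$ lie in a bounded interval independent of $e$, since $F_*^e S$ is nonnegatively graded and generated in uniformly bounded degrees. Then the section $F_*^e R\to F_*^e S$ lands in the $R$-submodule $T_e\subset F_*^e S$ generated by elements of degree at most $C$, so $F_*^e R$ is a graded direct summand of $T_e$; and $T_e$ is a finite direct sum of shifts of modules of the form $R\cdot (M_i)_{\leq c}$, where, because each $M_i$ is a fixed module with discrete, bounded-below degree set, only finitely many distinct truncations occur as $c$ ranges over the bounded set of values $C+\beta$. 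Each $R\cdot (M_i)_{\leq c}$ \emph{is} a finitely generated graded $R$-module, so graded Krull--Schmidt now applies and produces the single finite list of indecomposables up to shift that witnesses graded FFRT of $R$. Without some argument of this kind, your step (ii) fails in the only case the paper actually needs.
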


In particular, this does not require $R\hookrightarrow S$ to be module-finite, which will be important in our use in what follows.

\subsection{Remmert--Van de Ven-type theorems}
\label{RVdV}
The following theorems will be used in the study of finite direct summands; they are a strong obstacle for a smooth variety $X$ to admit a surjection from some projective space.

\begin{thm}[{\cite{Lazarsfeld}}]
\label{Lazarsfeld}
 Let $X$ be a smooth complex projective variety. If $\P^n\to X$ is a surjection, then $X\cong \P^m$
for some $m\leq n$.
\end{thm}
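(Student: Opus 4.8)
The plan is to deduce the statement from Mori's solution of the Hartshorne conjecture — a smooth projective variety with ample tangent bundle is isomorphic to projective space — fed by the classical fact that $T_{\P^n}$ is ample (the Euler sequence exhibits $T_{\P^n}$ as a quotient of $\O(1)^{\oplus (n+1)}$, and quotients of ample bundles are ample). The entire proof then amounts to propagating ampleness of the tangent bundle across the surjection $f:\P^n\to X$. Everything outside of this positivity transfer should be formal.

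First I would reduce to the case of a \emph{finite} surjection with $\dim X=n$. Writing $m=\dim X$, I restrict $f$ to a general linear subspace $L\cong \P^m\subseteq \P^n$: since a linear space of dimension $m$ meets the general fibre of $f$ (of dimension $n-m$) in the expected dimension $m+(n-m)-n=0$, and nonemptily by the projective dimension theorem, the restriction $f|_L:\P^m\to X$ is again surjective. Replacing $\P^n$ by $L$, I may assume $n=m=\dim X$. Now $f$ is automatically finite: because $\rho(\P^n)=1$, the pullback $f^*A$ of an ample divisor $A$ on $X$ is a nonzero nef class, hence a positive multiple of the hyperplane class and so ample; thus $f^*A\cdot C>0$ for every curve $C\subseteq \P^n$, no curve is contracted, the fibres are finite, and a proper morphism with finite fibres is finite.

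It then suffices to show $T_X$ is ample, after which Mori's theorem yields $X\cong \P^n$ (and, unwinding the reduction, $X\cong \P^m$ in general). Since ampleness of a vector bundle descends along finite surjective morphisms, I would aim to prove that $f^*T_X$ is ample. Over the étale locus the differential gives an isomorphism $T_{\P^n}\cong f^*T_X$, and in general $df:T_{\P^n}\to f^*T_X$ is an injection of bundles of the same rank whose cokernel is supported on the ramification divisor $R$; taking determinants recovers $f^*(-K_X)=-K_{\P^n}+R=(n+1)H+R$, which is ample and descends to show $-K_X$ ample, so $X$ is at least Fano. The main obstacle is precisely this ramification: one cannot simply realize $f^*T_X$ as a quotient of the ample bundle $T_{\P^n}$, because $df$ is only generically an isomorphism, and a full-rank ample subsheaf does not by itself force its overbundle to be ample. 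Upgrading the generic isomorphism to genuine ampleness of $T_X$ is where the finer theory of ample vector bundles must be invoked, and I expect this ampleness-through-ramification step to be the crux of the argument, with the reductions above and the appeal to Mori's theorem being comparatively routine.
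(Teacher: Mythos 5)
Your reductions are fine, and they do match the first steps of the actual argument: cutting by a general linear subspace to get a surjection from $\P^m$ with $m=\dim X$, observing via $\rho(\P^m)=1$ that the map is finite, and deducing from $K_{\P^m}=f^*K_X+R$ together with descent of ampleness along finite surjections that $X$ is Fano. But the proposal then stops exactly where the theorem begins: you declare that it suffices to prove $T_X$ ample and admit that ``upgrading the generic isomorphism to genuine ampleness of $T_X$'' is a step you do not know how to carry out. That is the whole content of the theorem, not a routine verification, so this is a genuine gap. Moreover, the strategy you propose for filling it is not just difficult but misdirected: a full-rank inclusion $T_{\P^n}\hookrightarrow f^*T_X$ of an ample bundle with cokernel supported on the ramification divisor does not force $f^*T_X$ to be ample (as you yourself observe, ampleness of a bundle of rank at least two cannot be certified this way), and the known proof never establishes ampleness of $T_X$ at all --- that is only known a posteriori, once $X\cong\P^m$ has been proved.

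The paper imports this theorem from Lazarsfeld and describes its method in one sentence: it relies on Mori's characterization of projective space \emph{and its consequences for the deformation theory of curves}, and that phrasing points to the real assembly of the ingredients. One restricts $df$ to a general line $\ell$, where the full-rank-subsheaf argument \emph{is} valid because vector bundles on $\P^1$ split: $T_{\P^n}|_\ell\cong\O(2)\oplus\O(1)^{\oplus(n-1)}\hookrightarrow f^*T_X|_\ell$ forces $f^*T_X|_\ell\cong\bigoplus_i\O(a_i)$ with $a_1\geq 2$ and all $a_i\geq 1$. Hence the images of lines are rational curves $C$ with $-K_X\cdot C\geq n+1$ whose deformations are unobstructed (the relevant $H^1$ vanishes) and cover $X$, and one then runs Mori's deformation-theoretic machinery --- bend-and-break and the arguments from his proof of the Hartshorne conjecture --- on this family of curves to conclude $X\cong\P^n$. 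In other words, Mori's work enters not as the black-box implication ``$T_X$ ample $\Rightarrow$ $X\cong\P^n$'' applied to $X$, but through the techniques of its proof applied to the curves $f(\ell)$. Your proposal has the right ingredients (Mori, the injection $df$, finiteness) but concentrates all of the difficulty into a single step that is both unproven and, as a route, not one that the actual proof (or any known argument) takes.
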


This result, which proved a conjecture of Remmert--Van de Ven,
relies on Mori's result characterizing projective space via ampleness of the tangent bundle and its consequences for the deformation theory of curves.

From this result, 
we then immediately obtain:

\begin{cor}
Let $X$ be a smooth complex projective variety and $\P(a_0,\dots,a_n) \to X$ a surjection.
Then $X\cong \P^m$ for some $m\leq n$.
\end{cor}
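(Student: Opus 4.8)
The plan is to reduce the weighted case to the statement for ordinary projective space in Theorem~\ref{Lazarsfeld} by producing a finite surjective morphism $\P^n\to\P(a_0,\dots,a_n)$ and composing it with the given surjection onto $X$. Once such a comparison map is in hand, the corollary is genuinely immediate.

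First I would construct the comparison map at the level of graded rings. Write $\P(a_0,\dots,a_n)=\Proj A$, where $A=\C[x_0,\dots,x_n]$ with $\deg x_i=a_i$, and $\P^n=\Proj B$, where $B=\C[y_0,\dots,y_n]$ carries the standard grading $\deg y_i=1$. The assignment $x_i\mapsto y_i^{a_i}$ defines a $\C$-algebra homomorphism $\phi:A\to B$ that is degree-preserving, since $\deg_B y_i^{a_i}=a_i=\deg_A x_i$, and hence graded. This $\phi$ is injective (distinct monomials map to distinct monomials) and module-finite: $B$ is generated over $\phi(A)$ by the finitely many monomials $y_0^{b_0}\cdots y_n^{b_n}$ with $0\le b_i<a_i$.

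Next I would pass to $\Proj$. Because the ideal of $B$ generated by the images $\phi(x_i)=y_i^{a_i}$ has radical equal to the irrelevant ideal $(y_0,\dots,y_n)$, the induced morphism $\pi:\P^n=\Proj B\to\Proj A=\P(a_0,\dots,a_n)$ is defined on all of $\P^n$. Since $\phi$ is a module-finite injection of graded rings with $A_0=B_0=\C$, the morphism $\pi$ is finite; and integrality of $B$ over $A$ together with injectivity of $\phi$ gives, via lying-over, that $\pi$ is surjective. Finally, composing $\pi$ with the assumed surjection $\P(a_0,\dots,a_n)\to X$ yields a surjection $\P^n\to X$ from ordinary projective space onto the smooth projective variety $X$, and Theorem~\ref{Lazarsfeld} then forces $X\cong\P^m$ for some $m\le n$.

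The only real content is the construction of $\pi$ and the verification that it is everywhere-defined, finite, and surjective; this is the step I would treat as the main obstacle, though it is entirely standard. Beyond it, no further argument is needed: the result is a formal consequence of the Remmert--Van de Ven theorem applied to the composite surjection.
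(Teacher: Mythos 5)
Your proof is correct and follows essentially the same route as the paper: both reduce to Theorem~\ref{Lazarsfeld} by precomposing with a finite surjection $\P^n\to\P(a_0,\dots,a_n)$. The map you build from $x_i\mapsto y_i^{a_i}$ is precisely the quotient map by $\mu_{a_0}\times\dots\times\mu_{a_n}$ that the paper invokes (your image ring $\C[y_0^{a_0},\dots,y_n^{a_n}]$ is the invariant ring of that group action), so your argument just spells out algebraically what the paper states geometrically.
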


This follows by noting that $\P(a_0,\dots,a_n)$ is itself the surjective image of $\P^n$: it is the quotient of $\P^n$ by the finite group $\mu_{a_0}\times \dots\times \mu_{a_n}$.


\begin{rem}
In fact, Theorem~\ref{Lazarsfeld} remains true in positive characteristic, if one adds the condition of separability to the morphism $\P^n\to X$; see \cite[Section~V.3]{Kollar}.
\end{rem}

\begin{exa}
We note that the assumption on smoothness of $X$ is crucial. For example, the hypersurface $V(x_0^n-x_1\cdots x_n)\subset\P^n$ is the image of $\P^{n-1}$ under the finite morphism $\P^{n-1}\to \P^n$, $[a_0,\dots,a_{n-1}]\to [a_0\cdots a_{n-1},a_0^{n},\dots, a_{n-1}^n]$.
One can check that in this case, the finite morphism is in fact a quotient of $\P^{n-1}$ by the action of a finite group.
We do not know of an example where there is a surjection $\P^n\to X$, with $X$ not isomorphic to $\P^n$, that is not the quotient by a finite group.
\end{exa}

\subsection{Cox rings}
\label{cox}

For a projective variety $X$, the Cox ring of $X$ is a sort of ``universal coordinate ring'', which encodes the information of all homogeneous coordinate rings of $X$.

\begin{dfn}
Let $X$ be a factorial projective variety, and assume that $\Pic(X)$ is a free abelian group of rank $r$.
Let $D_1,\dots,D_r$ be a basis for $\Pic(X)$.
The Cox ring of $X$ (with respect to $D_1,\dots,D_r$) is the $\Z^r$-graded ring 
$$
R(X,D_1,\dots,D_r):=\bigoplus_{(a_1,\dots,a_r)\in \Z^r} H^0(X,\O_X(a_1D_1+\dots+a_r D_r)),
$$
with multiplication given by the natural multiplication of global sections 
$$
\displaylines{
H^0(X,\O_X(a_1D_1+\dots+a_r D_r))\otimes 
H^0(X,\O_X(m'_1D_1+\dots+m'_r D_r)) 
\hfill\cr\hfill
\to 
H^0(X,\O_X((a_1+m'_1)D_1+\dots+(a_r+m'_r) D_r)) .
}
$$
\end{dfn}
Different choices of basis yield isomorphic Cox rings, but the isomorphisms are not canonical. Nonetheless, we will suppress dependence on the choice of basis and write just $R(X)$ for the Cox ring of $X$.
The Cox ring can be defined when $\Cl(X)$ has torsion or $X$ is not factorial, but in our situation the above condition will be satisfied and so we avoid the more complicated issues this involves.

\begin{rem}
Let $D$ be a divisor, and say $D$ is linearly equivalent to $b_1D_1+\dots+b_r D_r$. We then recover the section ring $\bigoplus_m H^0(X,\O_X(mD))$ as 
$$
\bigoplus_m H^0(X,\O_X(mb_1D_1+\dots+mb_rD_r)) \subset R(X),
$$
i.e., as a ``Veronese subring'' of $R(X)$.
When $D$ is very ample and defines a projectively normal embedding, then, we get the homogeneous coordinate ring corresponding to this embedding as a Veronese subring of $R(X)$.
\end{rem}

We can rephrase this graded subring as the invariants of a torus action.
The $\Z^r$-grading gives a torus action of $\G_m^r$ on $R(X)$, with 
$$(\l_1,\dots,\l_r) \cdot s = 
\l_1^{a_1}
\cdots
\l_r^{a_r} s \quad\text{ for }\quad s\in H^0(X,\O_X(a_1D_1+\dots+a_r D_r)).
$$
Given a choice of divisor $D\sim b_1D_1+\dots+b_r D_r$,
we can extend this to a torus action on $R(X)[t]$, where 
$$ 
(\l_1,\dots,\l_r) \cdot t = \l_1^{-b_1}\cdots \l_r^{-b_r} t.
$$
Note that the invariants of this action are precisely 
the elements
$$
s  t^m, \quad s\in H^0(X,\O_X(mD)),
$$
and so the invariant ring is 
$$
(R[t])^{\G_m^r} \cong \bigoplus_m H^0(X,\O_X(mD)).
$$

\begin{rem}
The above is more commonly phrased as follows:
If $D$ is ample, the calculation above shows 
 that $\Proj(R[t])^{\G_m^r}=X$, and thus $X$ is a GIT quotient of $\Spec R[t]$. Different choices of ample divisor $D$ produce different GIT quotients yielding $X$.
Since we will be concerned directly with the homogeneous coordinate ring of $X$, rather than its Proj, we will need the full details of the above calculation.
\end{rem}

\section{Finite direct summands}
\label{finsum}

In this section, we consider 
algebro-geometric obstructions for a homogeneous $\C$-algebra $R$ to be a \emph{finite} graded direct summand.  In fact, we rule out even having a graded module-finite inclusion of $R$ into a polynomial ring.
(This is in fact the only obstruction to a normal ring being a module-finite summand of a polynomial ring: as mentioned previously, given any inclusion, the trace map on fraction fields descends to a splitting, at least for normal rings.)

\begin{thm}
\label{finite}
Let $R$ be a $\N$-graded $\C$-algebra, finitely generated in degree 1, with an isolated singularity. Then $R$ is not a finite graded direct summand of a polynomial ring, unless $\Proj R\cong \P^n$.
\end{thm}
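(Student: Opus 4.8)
The plan is to show that a graded module-finite inclusion $R\hookrightarrow S$ into a polynomial ring is enough to force $\Proj R\cong\P^m$; the existence of a splitting plays no role, so I will ignore it entirely, as the parenthetical remark preceding the statement anticipates. First I would record the shape of $S$: since $S$ is a graded regular $\C$-algebra finitely generated over $S_0=\C$, the lemma on graded regular rings gives $S\cong\C[x_1,\dots,x_m]$ with $\deg x_i=d_i>0$, so that $\Proj S=\P(d_1,\dots,d_m)$ is a (weighted) projective space. The strategy is then to turn the algebraic inclusion into a finite surjective morphism $\P(d_1,\dots,d_m)\to\Proj R$ and invoke the Remmert--Van de Ven results of Section~\ref{RVdV}.

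Next I would use the hypotheses on $R$ to see that $X:=\Proj R$ is smooth. Since $R$ is generated in degree $1$, the natural $\G_m$-action on $\Spec R$ has the cone point $V(R_+)$ as its unique fixed point, and the quotient map $\Spec R\setminus V(R_+)\to X$ is a $\G_m$-torsor, hence a smooth surjection. The assumption that $R$ has an \emph{isolated} singularity means exactly that this punctured cone is regular, and regularity descends along the smooth surjection to give that $X$ is smooth. This is precisely the point at which the isolated-singularity hypothesis is needed: the corollary to Theorem~\ref{Lazarsfeld} requires a smooth target.

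The heart of the argument is producing the morphism of $\Proj$'s and checking it is finite and surjective. A graded injection $R\hookrightarrow S$ defines a morphism on the open locus of $\Proj S$ where $R_+$ generates an $S_+$-primary ideal, i.e.\ where $\sqrt{R_+S}=S_+$. Here module-finiteness is decisive: each generator $x_i$ of $S$ is integral over $R$, and taking the homogeneous leading relation shows that some power of $x_i$ lies in $R_+S$, so $S_+\subseteq\sqrt{R_+S}$ and the morphism is defined on all of $\Proj S$. Finiteness follows since $S$ is a finite $R$-module (a property surviving the relevant localizations), and surjectivity follows from lying-over for the integral extension $R\hookrightarrow S$, passing to $\Proj$ as usual: the identity $\sqrt{R_+S}=S_+$ guarantees that homogeneous primes avoiding $S_+$ contract to homogeneous primes avoiding $R_+$, and conversely.

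Combining these steps yields a finite, in particular surjective, morphism $\P(d_1,\dots,d_m)\to X$ with $X$ smooth, so the corollary to Theorem~\ref{Lazarsfeld} on surjections from weighted projective space forces $X\cong\P^{m'}$, completing the argument. I expect the one genuinely nontrivial point to be the verification that the rational map $\Proj S\dashrightarrow\Proj R$ is everywhere defined (and finite): the leading-term/integrality computation giving $\sqrt{R_+S}=S_+$ is where the module-finite hypothesis is truly used, and once it is in place the finiteness, surjectivity, and final appeal to the Remmert--Van de Ven corollary are formal.
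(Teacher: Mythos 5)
Your proposal is correct and follows essentially the same route as the paper: convert the graded module-finite inclusion $R\hookrightarrow S$ into a finite surjection $\P(d_1,\dots,d_m)\to\Proj R$ and apply Theorem~\ref{Lazarsfeld} (via its weighted-projective-space corollary), noting that the splitting itself is never used. The only difference is one of detail rather than substance: you spell out the points the paper leaves implicit, namely that smoothness of $\Proj R$ follows from the isolated-singularity hypothesis via descent along the $\G_m$-torsor, and that $\sqrt{R_+S}=S_+$ (via integrality of the $x_i$ over $R$) is what makes the induced map on $\Proj$'s everywhere defined, finite, and surjective.
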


The assumptions in the theorem guarantee that $R$ is of the form $S(X,L) =\bigoplus H^0(X,\O_X(mL))$ for $X$ a smooth projective variety.
The key is that $\Proj R$ is almost never $\P^n$, except for fairly well-understood exceptions. For example:

\begin{cor}
Homogeneous coordinate rings of
smooth (nondegenerate) complete intersections in $\P^N$ are not finite graded direct summands of regular rings, except for degree-2 curves in $\P^2$.
\end{cor}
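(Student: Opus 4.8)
The plan is to deduce this directly from Theorem~\ref{finite}. First I would verify that the homogeneous coordinate ring $R=\C[x_0,\dots,x_N]/(f_1,\dots,f_c)$ of a smooth complete intersection $X\subset\P^N$ of dimension $n$ meets the hypotheses of that theorem: it is visibly $\N$-graded and generated in degree $1$, and since $X$ is smooth the complement of the vertex in $\Spec R$ is a $\G_m$-bundle over $X$ and hence smooth, so $R$ has an isolated singularity. (Being a complete intersection $R$ is Cohen--Macaulay, and regularity in codimension one then gives normality, so $R$ really is the coordinate ring of the smooth variety $X=\Proj R$.) Theorem~\ref{finite} then reduces the corollary to the purely geometric assertion that a smooth nondegenerate complete intersection is isomorphic to projective space only when it is a conic in $\P^2$.

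Next I would record the numerical consequences of an isomorphism $X\cong\P^n$. Nondegeneracy means no defining form is linear, i.e. $d_i:=\deg f_i\geq 2$ for all $i$, where $c=N-n\geq 1$ is the codimension, and by adjunction $-K_X=\O_X(N+1-\sum_i d_i)$. If $X\cong\P^n$ then $\Pic X\cong\Z$ and $\O_X(1)=\O_{\P^n}(k)$ for some $k\geq 1$; comparing degrees gives $\prod_i d_i=k^n$, the degree of the $k$-uple embedding of $\P^n$. Since $c\geq 1$ and every $d_i\geq 2$, this already forces $k\geq 2$. Likewise, comparing canonical bundles gives $k\,(N+1-\sum_i d_i)=n+1$, so in particular $k\mid n+1$.

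The case $n\geq 3$ I expect to dispatch cleanly via the Grothendieck--Lefschetz theorem: for a smooth complete intersection of dimension at least $3$, restriction yields $\Pic X\cong\Pic\P^N=\Z\cdot\O_X(1)$, so $\O_X(1)$ generates $\Pic X$. Under an isomorphism $X\cong\P^n$ this generator must correspond to the ample generator $\O_{\P^n}(1)$, forcing $k=1$ and contradicting $k\geq 2$. Hence no nondegenerate complete intersection of dimension $\geq 3$ is isomorphic to $\P^n$.

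The low-dimensional cases $n=1,2$ I expect to be the main obstacle, since there Grothendieck--Lefschetz no longer guarantees that $\O_X(1)$ generates $\Pic X$ and $k$ may exceed $1$; here I would argue by a direct finite computation using $k\mid n+1$ together with $\prod_i d_i=k^n$. For $n=2$ one gets $k=3$, hence $\prod_i d_i=9$ and $\sum_i d_i=c+2$, and checking the only factorizations $9=9$ and $9=3\cdot 3$ (all parts $\geq 2$) shows neither satisfies the sum constraint, so no such surface exists. For $n=1$ one gets $k=2$, hence $\prod_i d_i=2$, forcing $c=1$, $d_1=2$, $N=2$: precisely the smooth conic in $\P^2$. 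This is the claimed exception, and it is indeed a finite graded summand, being the $\Z/2$-Veronese $\C[s^2,st,t^2]$. Collecting the three cases gives the corollary.
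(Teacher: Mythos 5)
Your proof is correct and takes the same route as the paper: apply Theorem~\ref{finite} to reduce to the claim that a smooth nondegenerate complete intersection is isomorphic to $\P^n$ only for the plane conic, whose coordinate ring is the split Veronese $\C[u^2,uv,v^2]$. The paper simply asserts this classification as "well understood," whereas you actually prove it (degree and adjunction numerics, Grothendieck--Lefschetz for dimension $\geq 3$, and the finite check in dimensions $1$ and $2$), and your verification of the hypotheses of Theorem~\ref{finite} and of each case is sound.
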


The exceptional case is just the Veronese ring $\C[x,y,z]/(xz-y^2)\cong \C[u^2,uv,v^2]$, which is the Homogeneous coordinate ring of $\P^1$ as a plane conic.

\begin{rem}
If $R$ is generated in degree 1 and $\Proj R=\P^n$, and if $R$ is normal (which is necessary to be a direct summand of a regular ring), then $R$ must be a Veronese subring of a standard-graded polynomial ring. ($R$ is a homogeneous coordinate ring of $\P^n$ under some complete embedding.)
\end{rem}

\begin{rem}
We can weaken the assumption on generation of $R$ in degree 1. If $R$ is not generated in degree 1, choose $d$ such that the Veronese subring $R_{(d)}:= \bigoplus_m R_{md}$ is generated in degree 1. $R_{(d)}$ is a direct summand of $R$, and so if $R_{(d)}$ is not a direct summand of a polynomial ring $R$ cannot be either. However, note that $R_{(d)}$ may not have isolated singularities, even if $R$ is regular. 
For example, consider
$\C[x,y,z,w]/(xz-y^2)$, which has non-isolated singularities. But it is also
the 2-Veronese subring of the graded polynomial ring $\C[u,v,r,s]$, with $\deg u =\deg v=\deg r =1$ and $\deg s=2$.

So, the assumptions in the theorem can be weakened to ``$R$ has a Veronese subring generated in degree-1 with isolated singularities''.
\end{rem}

\begin{proof}[Proof of Theorem~\ref{finite}]
Assume that $R\hookrightarrow \C[x_0,\dots,x_n]$ is module-finite. 
Let $X=\Proj R$. 
By assumption, $X$ is a smooth projective variety.

Since $R\subset \C[x_0,\dots,x_n]=S$ is a module-finite  inclusion of graded rings, the homogeneous maximal ideal of $S$ must contract to the homogeneous maximal ideal of $R$. There is thus an induced surjection of projective varieties $\P^n \to \Proj R=X$ (if all generators of $R$ have the same degree in $S$) or $\P(a_0,\dots,a_n)\to X$ if the $i$-th generators has degree $a_i$.
Because $X$ is smooth, we can then apply the theorems of \cite{Lazarsfeld} (in the case $\P^n \to X$) or the corollary we stated  in Section~\ref{RVdV} (in the case $\P(a_0,\dots,a_n)$) to conclude that $\Proj R\cong \P^n$.
%
%
%
%
\end{proof}

\begin{rem}
We can also remove the characteristic-0 assumption, if we weaken the conclusion: we can only say that that there is no module-finite graded map $R\hookrightarrow S$ that is also separable. This follows since Theorem~\ref{Lazarsfeld} is true in positive characteristic if the map $\P^n \to X$ is assumed to be separable.
We do not know any examples of a characteristic-$p$ ring $R$ that is a direct summand of a regular ring $S$, but with the inclusion $R\hookrightarrow S$ necessarily inseparable.
As pointed out by Eamon Quinlan-Gallego,
a potential source of such examples is given by degree-$p$ Veronese subrings in characteristic $p$: for example, the obvious inclusion $R=\F_2[u^2,uv,v^2]\subset \F_2[u,v]$ is not separable (but it's possible there is some other inclusion of $R$ into a polynomial ring that does split).
\end{rem}

\begin{exa}
Again, we note that the hypothesis of smoothness is crucial.
As discussed previously, the hypersurface $V(x_0^n-x_1\cdots x_n)\subset\P^n$ is  the image of $\P^{n-1}$ under a finite morphism. The corresponding module-finite inclusion of coordinate rings 
$$
\C[x_0,\dots,x_n]/(x_0^n-x_1\cdots x_n) \hookrightarrow \C[a_0,\dots,a_{n-1}],\quad
x_0\mapsto a_0\cdots a_{n-1}, x_1\mapsto a_{0}^{n-1},\dots, x_n\mapsto a_{n-1}^{n-1}
$$
is easily seen to split, but $\Proj \C[x_0,\dots,x_n]/(x_0^n-x_1\cdots x_n)$ is not isomorphic to $\P^{n-1}$.
\end{exa}

\begin{rem}
It would be interesting to find non-toric examples of (necessarily singular) hypersurfaces or complete intersections in $\P^n$ whose homogeneous coordinate rings are finite direct summands of regular rings.
\end{rem}

\section{Quadric hypersurfaces}
\label{quad}

First, we recall the situation for quadrics in $\leq 4$ variables over an algebraically closed field of characteristic 0.
Write $Q_n$ for the quadric in $n$ variables (since we work over an algebraically closed field of characteristic 2, there is a unique such quadric up to isomorphism).
We need consider only the case of quadrics with isolated singularity at the origin (i.e., full-rank quadrics), since after a change of coordinates a quadric of lower rank is just a cone over a full-rank quadric, and so it suffices to consider the latter (as adjoining variables clearly doesn't affect being a direct summand of a regular ring).

\begin{itemize}
\item A quadric in $\leq 2$ variables is not a domain, so it cannot be a subring of any domain.
\item $k[x,y,z]/(xz-y^2) \cong k[u^2,uv,v^2]$, so $Q_3$ is a \emph{finite} direct summand of a polynomial ring.  $\Proj Q_3\cong \P^1$, so there is no contradiction with the results of the preceding section. Note also that this is independent of the characteristic 0 assumption.
\item $ Q_4=\C[x,y,z,w]/(xw-yz)$ is a toric ring, and so $Q_4$ is a graded direct summand of a polynomial ring. Explicitly, $ \C[x,y,z,w]/(xw-yz)\subset k[u,v,s,t]$ via $x\mapsto us, y\mapsto ut, z\mapsto vs, w\mapsto vt$. 
This is also independent of the characteristic 0 assumption.
However, since Proj of this ring is $\P^1\times \P^1$, it cannot be a \emph{finite} graded direct summand of a polynomial ring, by the above theorem. In fact, there is a more elementary argument showing that $Q_n$ cannot be a finite direct summand, even without the graded condition: $\Cl Q_3\cong \Z$, which is not torsion.
\end{itemize}

Our theorem in the preceding section implies the following for quadric hypersurfaces:

\begin{cor}
The homogeneous coordinate ring of a quadric hypersurface $Q$ in $\P^n$ is a finite graded direct summand of a polynomial ring if and only if $Q$ is a cone over a plane conic.
\end{cor}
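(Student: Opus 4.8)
The plan is to follow the rank stratification of quadratic forms and reduce to the full-rank case governed by Theorem~\ref{finite}. Over $\C$, every quadratic form is, after a linear change of coordinates, a cone over a full-rank form $Q_r$, where $r$ is its rank; as noted at the opening of Section~\ref{quad}, adjoining the resulting cone variables does not affect whether the ring is a direct summand of a regular ring. Moreover, $Q$ is a cone over a plane conic precisely when $r=3$. The statement thus reduces to showing that the full-rank quadric $Q_r$ is a finite graded direct summand of a polynomial ring if and only if $r=3$.

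The cases $r\leq 3$ are essentially settled in the excerpt: for $r\leq 2$ the ring $Q_r$ is not a domain and so admits no inclusion into a polynomial ring, while for $r=3$ we have $Q_3=\C[x,y,z]/(xz-y^2)\cong\C[u^2,uv,v^2]\subset\C[u,v]$, a finite graded direct summand. For $r\geq 4$ the form is full-rank, so the affine cone $\Spec Q_r$ is singular only at the vertex; hence $Q_r$ has an isolated singularity and is standard graded, placing us in the hypotheses of Theorem~\ref{finite}. It follows that if $Q_r$ were a finite graded direct summand we would need $\Proj Q_r\cong\P^{r-2}$, since $\dim\Proj Q_r=r-2$.

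The geometric heart of the argument is then that the smooth quadric $\Proj Q_r\subset\P^{r-1}$, of dimension $d:=r-2\geq 2$, is never isomorphic to $\P^{d}$. When $d=2$ this is immediate, as $\Proj Q_4\cong\P^1\times\P^1$ has Picard group $\Z^2$ while $\Pic(\P^2)\cong\Z$; alternatively, $\Cl(Q_4)\cong\Z$ is not torsion, so Corollary~\ref{torsClass} already excludes $Q_4$. For $d\geq 3$ the Lefschetz hyperplane theorem gives $\Pic(\Proj Q_r)\cong\Z$, generated by the hyperplane class $\O(1)$; any isomorphism to $\P^d$ would send this ample generator to $\O_{\P^d}(1)$, forcing the degree identity $2=(\O(1))^{d}=(\O_{\P^d}(1))^{d}=1$, a contradiction. (Equivalently, one compares Fano indices: $\Proj Q_r$ has index $d$ whereas $\P^d$ has index $d+1$.) This rules out $r\geq 4$ and, modulo the reduction, completes the classification.

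The step I expect to require the most care is the reduction to the full-rank case in the \emph{reflecting} direction for finite summands. The forward implication—that adjoining polynomial variables to $Q_3$ preserves being a finite graded direct summand—is immediate by extending the splitting by the identity on the new variables. The converse is more delicate: when $Q$ has genuine cone variables, its coordinate ring $Q_r[w_1,\dots,w_k]$ has a \emph{non}-isolated singularity, so Theorem~\ref{finite} does not apply directly, and for rank $r\geq 5$ the class group is trivial (indeed $Q_r[w]$ is klt and Gorenstein), so neither Corollary~\ref{torsClass} nor Lemma~\ref{klt} yields an obstruction. Descending \emph{finiteness} from $Q_r[w_1,\dots,w_k]$ to $Q_r$—rather than merely descending the property of being some possibly infinite direct summand, which follows formally from transitivity of splittings (Lemma~\ref{dirsum2})—is therefore the genuine technical point, and is where I would focus the bulk of the attention.
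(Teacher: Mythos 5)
Your route is the same as the paper's: the paper deduces this corollary directly from Theorem~\ref{finite}, after the reduction to full rank stated at the opening of Section~\ref{quad} and the explicit treatment of $Q_3$ and $Q_4$ there; the details you add for $r\geq 4$ (Picard rank of $\P^1\times\P^1$ in dimension $2$; Lefschetz plus the degree, or Fano index, comparison in dimension $\geq 3$) are correct and are exactly what the paper leaves implicit. However, your proof as written is incomplete, and you say so yourself: for a quadric of rank $r\geq 5$ with genuine cone variables, i.e.\ $R=Q_r[w_1,\dots,w_k]$ with $k\geq 1$, you have no argument. Theorem~\ref{finite} does not apply (the singularity is non-isolated), Corollary~\ref{torsClass} and Lemma~\ref{klt} give nothing (as you correctly note, $Q_r$ is factorial and klt for $r\geq 5$), and transitivity of splittings (Lemma~\ref{dirsum2}) only descends the \emph{non}-finite summand property. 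Flagging this as ``the genuine technical point'' does not discharge it; as it stands, your ``only if'' direction covers full-rank quadrics and, via class groups, rank $4$, but not cones of rank $\geq 5$.

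The gap closes with tools already in the paper, which is presumably why the paper treats the reduction as obvious. Suppose $R=Q_r[w_1,\dots,w_k]\hookrightarrow S$ is a module-finite graded split inclusion with $S$ a polynomial ring. Since $R$ is standard graded and the inclusion is graded, the images $f_1,\dots,f_k$ of the cone variables are homogeneous of degree one in $S$; by the paper's structure lemma for graded regular rings, $S_1$ is spanned by the degree-one variables of $S$, and injectivity of the inclusion forces $f_1,\dots,f_k$ to be linearly independent, so after a linear change of coordinates among the degree-one variables the $f_i$ are themselves variables, and $S/(f_1,\dots,f_k)S$ is again a polynomial ring. Splitting implies purity, so $(w_1,\dots,w_k)S\cap R=(w_1,\dots,w_k)$, and Lemma~\ref{splitquot} applied with $I=(w_1,\dots,w_k)$ descends everything to a module-finite graded split inclusion
$$Q_r=R/I\hookrightarrow S/IS$$
whose target is a polynomial ring. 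Now Theorem~\ref{finite}, together with your observation that a smooth quadric of dimension $\geq 2$ is not a projective space, rules out $r\geq 4$ and completes the classification.
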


With the finite direct summand case thus settled, it is interesting to consider the question of non-finite direct summands. 
The following result is known when the number of variables is even:

\begin{prop}[\cite{HJNB}]
Let $n\geq 4$ be even.
The quadric hypersurface in $n$ variables, which without loss of generality can be taken to be $\C[x_1,\dots,x_n]/(x_1x_2-x_2x_3+\dots\pm x_{n-1}x_n)$, is a direct summand of a regular ring.
\end{prop}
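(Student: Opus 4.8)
The plan is to realize the coordinate ring $R=\C[x_1,\dots,x_{2m}]/(q)$ as a ring of invariants of a linearly reductive group acting on a polynomial ring; once this is done, the Reynolds operator furnishes the required $R$-linear splitting, exactly as recalled above for rings of invariants of linearly reductive groups. The model to keep in mind is the first interesting case $n=6$: here $\Proj R$ is the smooth quadric fourfold, which is the Klein quadric $\mathrm{Gr}(2,4)\subset\P^5$, and its coordinate ring is the Plücker algebra generated by the $2\times 2$ minors of a generic $2\times 4$ matrix. By the first fundamental theorem of invariant theory this Plücker algebra is exactly $\C[2\times 4 \text{ matrices}]^{\SL_2}$, and since $\SL_2$ is linearly reductive in characteristic $0$ we conclude that $R$ is a direct summand of a polynomial ring. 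Note that this inclusion is necessarily not module-finite, consistent with Theorem~\ref{finite}, since $\Proj R\not\cong\P^n$.

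For general even $n=2m$ I would produce the analogous invariant description using the spin representation. The smooth even quadric is the rational homogeneous space $\mathrm{SO}_{2m}/P$, and by Borel--Weil its coordinate ring is the Cartan-component algebra $\bigoplus_k V_{k\omega_1}$ on the standard representation $V=\C^{2m}$. The key representation-theoretic input is that $V$ occurs as a summand of a tensor product of half-spin representations via Clifford multiplication, giving an $\mathrm{SO}_{2m}$-equivariant bilinear map $b\colon \Delta\otimes\Delta\to V$, which in the hyperbolic model $q=\sum_{i=1}^m x_iy_i$ can be written down explicitly using the exterior-algebra realization $\Delta=\wedge^\bullet \C^m$ with $\Gamma_{x_i}=e_i\wedge(-)$ and $\Gamma_{y_i}=\iota_{e_i}$. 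Taking a suitable number of copies of $\Delta$ and an auxiliary linearly reductive group $H$ acting on the copies so that the components of $b$ become $H$-invariant, one obtains a graded ring map $\Phi\colon R\to \C[\Delta^{\oplus r}]$ sending $x_i$ to the invariant bilinear form $b_i$; the image lands in $\C[\Delta^{\oplus r}]^H$, and the aim is to show $\Phi$ is an isomorphism onto this invariant ring.

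The main obstacle is precisely this last step: identifying the correct auxiliary group $H$ and proving that $\C[\Delta^{\oplus r}]^H$ is cut out from the polynomial ring on the $b_i$ by the single quadric $q$ and nothing more. Concretely this requires two verifications. First, that $q$ lies in the kernel of $\Phi$, i.e.\ $\sum_{i,j}\eta^{ij}b_ib_j=0$ identically; this is a Fierz-type identity for Clifford multiplication and should follow from the defining relations of the operators $\Gamma$ in the exterior-algebra model. Second, and harder, a second-fundamental-theorem statement: that the ideal of relations among the invariants $b_i$ is generated by this single quadric, which pins down $H$ (in the $n=6$ case $H=\SL_2$) and the number of copies $r$ via the heuristic dimension count $r\cdot 2^{m-1}-\dim H = \dim R = 2m-1$. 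Granting these, linear reductivity of $H$ in characteristic $0$ produces the Reynolds operator as an $R$-linear splitting of $R\hookrightarrow \C[\Delta^{\oplus r}]$, and one invokes that a direct summand of a polynomial ring is a direct summand of a regular ring. As a sanity check, the construction should degenerate correctly in the boundary case $n=4$, where the half-spin representations collapse to the two rulings of $\P^1\times\P^1$ and $H$ is a torus, recovering the toric (Cox-ring) description from Section~\ref{cox}.
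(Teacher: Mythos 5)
Your $n=6$ base case is complete and correct: the Klein quadric realization $\Proj R\cong \mathrm{Gr}(2,4)$ together with the first and second fundamental theorems for $\SL_2$ acting on four copies of $\C^2$ does exhibit $\C[x_1,\dots,x_6]/(q)$ as $\C[2\times 4\text{ matrices}]^{\SL_2}$, and linear reductivity then gives the splitting. But for general even $n$ what you have written is a plan, not a proof, and you flag the gap yourself: the identification of the auxiliary group $H$, the verification that $q$ maps to zero, and above all the second-fundamental-theorem statement that the \emph{only} relation among the Clifford bilinears $b_i$ is the single quadric are all left unproven --- and these assertions are precisely the content of the proposition. Moreover, the spin route is unlikely to close as stated: for $m\geq 4$ the half-spin representation has dimension $2^{m-1}$, growing exponentially while $\dim R=2m-1$ grows linearly, so your own dimension count forces $H$ to be a reductive group of dimension $r\cdot 2^{m-1}-(2m-1)$ whose invariants on $\Delta^{\oplus r}$ are generated by the $2m$ components of Clifford multiplication with exactly one relation; no classical fundamental theorem delivers such a statement (already at $n=8$, taking $r=1$ forces $\dim H=1$, and a nontrivial $\G_m$-action cannot have $8$ independent degree-one invariants, so the ansatz fails). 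The exceptional isomorphism $\mathrm{Spin}_6\cong \SL_4$ that makes $n=6$ work has no analogue in higher rank.

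The gap can be closed --- and the paper does so --- by a uniform piece of classical invariant theory that avoids spinors entirely: \emph{mixed} vector/covector invariants. Set $c=n/2-1$ and let $\SL_c(\C)$ act on $V^{\oplus(c+1)}\oplus V^*$ with $V=\C^c$, i.e.\ on the polynomial ring in the entries of a $c\times(c+1)$ matrix $(u_{ij})$ and a covector $(v_j)$. Weyl's first fundamental theorem says the invariant ring is generated by the $c+1$ maximal minors $\Delta_i$ and the $c+1$ contractions $p_i=\sum_j u_{ij}v_j$, and the second fundamental theorem says the only relation among them is the Laplace-type identity $\Delta_1p_1-\Delta_2p_2+\dots+(-1)^{c+1}\Delta_{c+1}p_{c+1}=0$, which is exactly the full-rank quadric in $n=2(c+1)$ variables. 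This identifies $R$ with the invariant ring of a linearly reductive group in one stroke for all even $n\geq 4$; your $n=6$ argument is recovered as the special case where $V^*\cong V$ for $\SL_2$, so that $V^{\oplus 3}\oplus V^*$ is just four copies of $\C^2$ and the minors-plus-contractions become the six Pl\"ucker coordinates.
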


This is an immediate corollary of \cite[Chapter II, Sections~6 and~14]{Weyl}, but we give the details as they may prove useful for the case where the number of variables is odd.

\begin{proof}
Let $c=n/2-1$, and consider the action of $\SL_c(\C)$ on $V^{\oplus c+1}\oplus V^*$. This gives rise to an action of $\SL_c(\C)$ on the symmetric algebra $\C[u_{ij},v_j: 1\leq i \leq n+1, 1\leq j\leq n]$. By \cite{Weyl}, the invariant ring of this action is generated by the $c+1$ maximal minors of the matrix 
$$
\begin{pmatrix}
u_{11} & \cdots & u_{1,c+1}\\
\vdots & \ddots &\vdots
\\
u_{c1} & \cdots & u_{c,c+1}
\end{pmatrix}
$$
and the $c+1$ inner products
$u_{i,1}v_1+\dots+u_{i,c}v_c$.
Write $\Delta_i$ for the minor obtained by removing the $i$-th column and $p_i$ for the inner product $u_{i,1}v_1+\dots+u_{i,c}v_c$.
Then the only relation on these generators is
$$
\Delta_1p_1-\Delta_2p_2+\dots+(-1)^{c+1}\Delta_{c+1} p_{c+1}.
$$
Thus, we have that $\C[x_1,\dots,x_{2n}]/(x_1x_2-x_2x_3+\dots\pm x_{n-1}x_n)$ is isomorphic to the invariant ring of this $\SL_c(\C)$ action (identifying $x_1$ with $\Delta_1$, $x_2$ with $p_1$, and so on up through $x_{n-1}$ with $\Delta_{c+1}$ and $x_n$ with $p_{c+1}$). Since $\SL_c(\C)$ is linearly reductive in characteristic 0, this inclusion splits.
\end{proof}

This result enables us
to obtain the following result for the quadric in five variables, which does not appear in the literature as far as we can tell:

\begin{prop}
The quadric hypersurface in five variables, say $\C[x_1,\dots,x_5]/(x_1x_2+x_3x_4-x_5^2)$, is a direct summand of a regular ring.
\end{prop}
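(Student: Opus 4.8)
The plan is to realize $Q_5$ as a hyperplane section of the six-variable quadric $Q_6$, cut out by an $\SL_2$-invariant element, and then descend the splitting coming from linear reductivity. Concretely, I would start from the explicit description of $Q_6$ in the preceding proposition: with $c=2$, the group $\SL_2(\C)$ acts on the eight-dimensional space $V^{\oplus 3}\oplus V^*$ (coordinates $u_{ij}$ for $1\le i\le 2$, $1\le j\le 3$, and $v_1,v_2$), and its invariant ring is $\C[\Delta_1,\Delta_2,\Delta_3,p_1,p_2,p_3]/(\Delta_1p_1-\Delta_2p_2+\Delta_3p_3)$, where $\Delta_j$ are the three maximal minors of $(u_{ij})$ and $p_j=u_{1j}v_1+u_{2j}v_2$. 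By linear reductivity the Reynolds operator splits $Q_6\hookrightarrow T:=\C[u_{ij},v_k]$.

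Next I would impose the relation $p_3=\Delta_3$, i.e. pass to the quotient by the ideal $I=(p_3-\Delta_3)$. In $Q_6$ this is a degree-$1$ element, and killing it turns the defining equation into $\Delta_1p_1-\Delta_2p_2+\Delta_3^2=0$, a rank-$5$ quadric; since over $\C$ all full-rank quadrics in five variables are isomorphic, $Q_6/I\cong Q_5$. The crucial observation is that the \emph{same} element, viewed in the polynomial ring $T$, is the quadratic form $p_3-\Delta_3=u_{13}v_1+u_{23}v_2-u_{11}u_{22}+u_{12}u_{21}$, which involves all eight variables in the four hyperbolic pairs $(u_{13},v_1)$, $(u_{23},v_2)$, $(u_{11},u_{22})$, $(u_{12},u_{21})$ and is therefore nondegenerate. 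Hence $T/IT\cong Q_8$, the full-rank quadric in eight variables.

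Since $Q_6\hookrightarrow T$ splits, it is pure, so $IT\cap Q_6=I$ and Lemma~\ref{splitquot} applies, producing a $Q_6/I$-linear splitting of $Q_5\cong Q_6/I\hookrightarrow T/IT\cong Q_8$. (Equivalently, because $p_3-\Delta_3$ is itself $\SL_2$-invariant, the group acts on $Q_8=T/IT$ with $(Q_8)^{\SL_2}=Q_6/I=Q_5$, and the splitting is again the Reynolds operator.) Because $8$ is even and $\ge 4$, the preceding proposition gives that $Q_8$ is a direct summand of a regular ring; composing the two splittings, using that a direct summand of a direct summand of a regular ring is again a direct summand of a regular ring (Lemma~\ref{dirsum2}), yields that $Q_5$ is a direct summand of a regular ring.

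The main point to check carefully is the quadratic-form computation: that $p_3-\Delta_3$ has full rank $8$ in $T$ (so that $T/IT$ is genuinely a full-rank quadric to which the even case applies) while simultaneously cutting $Q_6$ down to a rank-$5$ quadric. I expect this to be essentially the only content. It is also exactly here that the five-variable case is special: for $c\ge 3$ the minor $\Delta_{c+1}$ has degree $\ge 3$, so $p_{c+1}-\Delta_{c+1}$ is no longer a quadric and the target of the descended splitting is not a higher even quadric, which is presumably why the analogous statement for higher odd quadrics is left open.
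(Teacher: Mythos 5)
Your proposal is correct and follows essentially the same route as the paper's own proof: quotient $Q_6$ (realized as the $\SL_2$-invariant ring with generators $\Delta_j,p_j$) by the invariant element $p_3-\Delta_3$, apply Lemma~\ref{splitquot} to descend the Reynolds splitting, observe that the same element is a full-rank quadric in the eight ambient variables so the target becomes $Q_8$, and conclude via the even-variable proposition and Lemma~\ref{dirsum2}. The only (harmless) additions are your explicit purity check for the hypothesis $IT\cap Q_6=I$ and the closing remark on why the trick fails for $c\ge 3$, both of which the paper leaves implicit.
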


\begin{proof}
From the above, we have an inclusion $$\C[x_1,\dots,x_6]/(x_1x_2+x_3x_4+x_5x_6)\liso \C[\Delta_1,\Delta_2,\Delta_3,p_1,p_2,p_3]\hookrightarrow \C[u_{1,1},\dots,u_{3,2},v_1,v_2]$$ which splits as a direct summand.
Note that we can obtain the ring 
$\C[x_1,\dots,x_5]/(x_1x_2+x_3x_4-x_5^2)$
as a quotient of 
$ \C[x_1,\dots,x_6]/(x_1x_2+x_3x_4+x_5x_6)$ by the ideal $x_5-x_6$, which has image $\Delta_3-p_3$ in $\C[u_{1,1},\dots,u_{3,2},v_1,v_2]$.
Thus, by Lemma~\ref{splitquot} we have that 
$\C[x_1,\dots,x_5]/(x_1x_2+x_3x_4-x_5^2)$ is a direct summand of 
$\C[u_{1,1},\dots,u_{3,2},v_1,v_2]/(\Delta_3-p_3)$.

Now, note that
$$
\Delta_3 = \det\begin{pmatrix} u_{1,1} & u_{1,2}\\u_{2,1} & u_{2,2}\end{pmatrix} =u_{1,1}u_{2,2}-u_{1,2}u_{2,1}
$$
and 
$$
p_3=
u_{3,1}v_{1}
+
u_{3,2}v_{2}
$$
Thus, we have that 
$\C[u_{1,1},\dots,u_{3,2},v_1,v_2]/(\Delta_3-p_3)$ is itself isomorphic to 
$$ \C[x_1,\dots,x_8]/(x_1x_2+\dots+x_7x_8).$$
But by the proposition above, this is itself a direct summand of a regular ring, and thus 
by Lemma~\ref{dirsum2}
so is 
$\C[x_1,\dots,x_5]/(x_1x_2+x_3x_4-x_5^2)$.
\end{proof}

\begin{rem}
As far as we know, it is open whether a quadric in an odd number $n$ of variables is a direct summand of a regular ring for $n\geq 7$.
It is also unclear whether the above results hold in characteristic $p$, since $\SL_c$ will fail to be linearly reductive in characteristic $p$.
\end{rem}

\section{Smooth del Pezzo surfaces}
\label{delPezzo}

For smooth del Pezzo surfaces, we can give a complete description of which homogeneous coordinate rings can be direct summands of regular rings (without the finiteness assumption).

Recall that a del Pezzo surface $X$ is a smooth surface with ample anticanonical bundle $\O_X(-K_X)=\bigwedge^2 T_X$. Del Pezzo surfaces are classified by their degree $d=(-K_X)^2$, and that a del Pezzo surface of degree $d$ is a blow-up of $\P^2$ at $9-d$ general points, except that for degree $8$ it can also be $\P^1\times \P^1$.

In this section, we prove the following:

\begin{thm}
\label{delPezzos}
A complex del Pezzo surface $X_d$ of degree $d$ has a homogeneous coordinate ring that can be recognized as a direct summand of a regular ring if and only if $d\geq 5$.
\end{thm}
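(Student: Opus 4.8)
The plan is to handle the two directions by completely different mechanisms: for $d\geq 5$ I will construct the splitting from the Cox ring together with invariant theory, while for $d\leq 4$ I will extract an obstruction from the action of differential operators. The common first step, valid in all degrees, is the description of section rings as torus invariants from Section~\ref{cox}. Recall that every homogeneous coordinate ring of $X_d$ is (after replacing it by its normalization, which is harmless since direct summands of regular rings are normal) a section ring $S(X_d,L)=\bigoplus_m H^0(X_d,\O_{X_d}(mL))$ for some ample divisor $L$, and that each such section ring is the invariant ring of a torus $\G_m^r$ acting on $R(X_d)[t]$. Since tori are linearly reductive in characteristic $0$, the Reynolds operator exhibits $S(X_d,L)$ as a direct summand of $R(X_d)[t]$. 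By the transitivity of splittings (Lemma~\ref{dirsum2}), the positive direction thus reduces to showing that $R(X_d)[t]$ is a direct summand of a polynomial ring when $d\geq 5$; and since extending a splitting of $R(X_d)\hookrightarrow P$ over the variable $t$ shows $R(X_d)[t]$ is a summand of $P[t]$, it is enough to treat $R(X_d)$ itself.

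For $d\geq 6$ this is immediate: such an $X_d$ is toric (it is $\P^1\times\P^1$ or the blow-up of $\P^2$ at the at most three general, hence torically positioned, points), so its Cox ring is literally a polynomial ring. For $d=5$ the surface is the non-toric blow-up of $\P^2$ at four general points, but its Cox ring is the classical Plücker coordinate ring of the Grassmannian $\mathrm{Gr}(2,5)$, whose ten generators correspond to the ten $(-1)$-curves. By the first fundamental theorem of invariant theory this ring is $(\C[x_{ij}])^{\SL_2}$ for $\SL_2$ acting on a generic $2\times 5$ matrix of indeterminates, and as $\SL_2$ is linearly reductive in characteristic $0$ it is a direct summand of that polynomial ring. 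Combining with the torus reduction of the previous paragraph completes the ``if'' direction (and in fact exhibits, for each such $d$, an explicit projectively normal coordinate ring that splits).

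For the converse I would invoke the $D$-module obstruction of Corollary~\ref{dsimpsum}: a direct summand of a regular (hence $D$-simple) ring is itself $D$-simple. It therefore suffices to prove that for $d\leq 4$ and \emph{every} ample $L$, the section ring $S(X_d,L)$ fails to be $D$-simple. I emphasize that the standard singularity obstructions are useless here, since the cone over a Fano variety is typically klt, so a finer, $D$-module-theoretic invariant is genuinely required. The mechanism is that the homogeneous differential operators lowering degree on $S(X_d,L)$ are controlled by global twisted vector fields, i.e.\ by sections of symmetric powers of the tangent bundle $T_{X_d}$; when $T_{X_d}$ is not big these are too scarce to return a general element of the ring to a unit, and one can exhibit a proper nonzero $D$-stable submodule. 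The uniformity over all ample $L$ is precisely what makes the argument independent of the choice of embedding.

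The main obstacle is exactly this last step: establishing the failure of $D$-simplicity for every ample section ring of a del Pezzo surface of degree $\leq 4$. This requires understanding the ring of differential operators on these cones and, in particular, translating the non-bigness of $T_{X_d}$ (which holds precisely when $d\leq 4$) into the presence of a proper $D$-submodule. By contrast, once the Cox rings of the degree-$\geq 5$ del Pezzos have been identified, the positive direction is essentially bookkeeping built on linear reductivity and Lemma~\ref{dirsum2}.
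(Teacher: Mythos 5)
Your proposal follows essentially the same route as the paper: the positive direction is identical (section rings realized as $\G_m^r$-invariants of $R(X_d)[t]$, linear reductivity of tori in characteristic $0$, toric Cox rings for $d\geq 6$, and for $d=5$ the identification of $R(X_5)$ with the Pl\"ucker coordinate ring of $G(2,5)$, itself an invariant ring of the special linear group and hence a summand of a polynomial ring), and the negative direction uses the same $D$-simplicity obstruction via Corollary~\ref{dsimpsum}. The step you single out as the main obstacle is not reproved in the paper but imported from the literature: \cite{Me} shows that $T_{X_d}$ is not big for $d\leq 4$ and that, combined with Hsiao's theorem \cite{Hsiao} (if an ample section ring is $D$-simple then $T_X$ is big), no homogeneous coordinate ring of such an $X_d$ can be $D$-simple---so with those two citations supplied, your argument is complete and coincides with the paper's proof.
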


\begin{rem}
A del Pezzo surface of degree $X_d$ has a ``natural'' choice of homogeneous coordinate ring $S(X_d,-K_{X_d})=\bigoplus H^0(X_d,\O_{X_d}(-K_{X_d}))$ (although note that $-K_{X_d}$ is very ample only when $d\geq 3$).  
For $d\geq 3$, $-K_{X_d}$ embeds $X_d$ in $\P^{9-d}$.
Theorem~\ref{delPezzos} then says, for example, that:
\begin{itemize}
\item $d=3$: 
If $R=\C[x,y,z,w]/F(x,y,z,w)$, with $F$ a homogeneous cubic with isolated singularity at the origin. Then $R$ cannot be a direct summand of any regular ring.
\item $d=4$:
If $R=\C[x,y,z,w,v]/(Q_1,Q_2)$, with the $Q_i$ homogeneous quadrics, and with $R$ having an isolated singularity at the origin, then 
$R$ cannot be a direct summand of any regular ring.
\end{itemize}
\end{rem}

To prove Theorem~\ref{delPezzos}, we treat three cases: $d\geq 6$, $d\leq 4$, and $d=5$.
First,
note that del Pezzo surfaces of degree $\geq 6$ are toric, and so any homogeneous coordinate ring of such a del Pezzo is a toric ring and thus a direct summand of a regular ring.

For $d\leq 4$, we recall the following theorem:

\begin{thm}[{\cite{Me}}]
If $X$ is a complex del Pezzo surface of degree $d\leq 4$, the tangent bundle $T_X$ is not big, and as a consequence no homogeneous coordinate ring of $X$ can be $D$-simple.
\end{thm}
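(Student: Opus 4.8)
The statement splits into a geometric claim (non-bigness of $T_X$) and an algebraic consequence (failure of $D$-simplicity), and the plan is to deduce the latter from the former via a general principle linking differential operators on section rings to positivity of $T_X$; the non-bigness is the real content. I would first record why non-bigness of $T_X$ kills $D$-simplicity of every homogeneous coordinate ring at once. Any normal homogeneous coordinate ring of $X$ is a section ring $R=S(X,L)=\bigoplus_{m\ge 0}H^0(X,\O_X(mL))$ for some ample $L$, and the ring $D_R$ is $\Z$-graded. A degree-$d$, order-$\le j$ operator is induced by a global differential operator $\O_X(mL)\to\O_X((m+d)L)$ whose principal symbol is a section of $\Sym^j T_X\otimes\O_X(dL)$; thus a \emph{negative}-degree operator ($d<0$) forces $H^0(X,\Sym^j T_X\otimes\O_X(dL))\ne 0$ for some $j$, i.e.\ a section of a symmetric power of $T_X$ twisted by an anti-ample bundle. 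The cited principle of \cite{Me} is precisely that such negative-degree operators exist if and only if $T_X$ is big. Granting this, if $T_X$ is not big then $D_R$ has no negative-degree operators, so the irrelevant ideal $R_+=\bigoplus_{m>0}R_m$ is $D_R$-stable (a degree-$\ge 0$ operator cannot send a positive-degree element to degree $0$). Hence $R_+$ is a proper nonzero $D_R$-submodule, $R$ is not $D$-simple, and by Corollary~\ref{dsimpsum} $R$ is not a direct summand of any regular ring. Nothing here depends on $L$, so this handles all homogeneous coordinate rings simultaneously.

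For the geometric heart I would pass to the projectivized bundle $Y=\P(T_X)$, a threefold with tautological class $\xi$, and use that $T_X$ is big exactly when $\xi$ is big, i.e.\ when $h^0(X,\Sym^m T_X)=h^0(Y,\O_Y(m\xi))$ grows like $m^3$. Writing $X$ as the blow-up of $\P^2$ at $n=9-d\ge 5$ general points gives the numerical shadow $\xi^3=c_1(T_X)^2-c_2(T_X)=K_X^2-e(X)=d-(12-d)=2d-12$, which is negative for $d\le 5$. I would stress that this is suggestive but not conclusive: the volume equals $\xi^3$ only when $\xi$ is nef, which it is not here, and $\chi(\Sym^m T_X)<0$ for large $m$ only forces higher cohomology to grow rather than $h^0$ to vanish. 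The degree-$5$ del Pezzo already has $\xi^3=-2<0$ while $T_X$ \emph{is} big (consistent with its coordinate ring being a direct summand), so any correct argument must be sharp enough to separate $d=4$ from $d=5$.

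To actually prove $\mathrm{vol}(\xi)=0$ for $d\le 4$ I would analyze the cones of the threefold $Y$, whose N\'eron--Severi space is $\mathrm{NS}(X)\oplus\Z\xi$. The pseudoeffective and movable cones of $Y$ can be read off from the combinatorics of $(-1)$-curves and conic-bundle fibrations on the del Pezzo surface together with the restriction of $T_X$ to these curves: on a $(-1)$-curve $E$ one has a quotient $T_X|_E\to N_{E/X}=\O_E(-1)$, producing a curve in $Y$ on which $\xi$ has negative degree, and the proliferation of such curves once $n\ge 5$ is what should drive the positive part of $\xi$ to zero volume. As an alternative route I would exploit the surface identity $T_X\cong\Omega^1_X\otimes\O_X(-K_X)$, which gives $H^0(X,\Sym^m T_X)=H^0(X,\Sym^m\Omega^1_X\otimes\O_X(-mK_X))$ and reduces the question to the growth of twisted symmetric differentials on a blow-up of $\P^2$, analyzable through logarithmic symmetric differentials along the exceptional and $(-1)$-curves. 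The main obstacle in either approach is exactly this last step: verifying that the movable (positive) part of $\xi$ carries volume precisely zero for $n\ge 5$, a delicate positivity estimate that must detect the threshold at $d=5$ and into which the genericity of the blown-up points and the extra $(-1)$-curves appearing at $d\le 4$ enter in an essential way.
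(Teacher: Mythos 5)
Your proposal should first be measured against what the paper actually does with this statement: nothing. The theorem is quoted verbatim from \cite{Me} with no proof in the present paper, and the implication ``$T_X$ not big $\Rightarrow$ no section ring of $X$ is $D$-simple'' is exactly (the contrapositive of) Hsiao's theorem, \cite[Theorem~1.2]{Hsiao}, which the paper invokes separately in Section~\ref{hypersurfaces} to handle hypersurfaces. Your first paragraph reconstructs that implication correctly: $D_R$ is graded, $D$-simplicity forces operators of negative degree, the symbol of such an operator produces (via the Euler-sequence filtration on the tangent sheaf of the punctured cone) a nonzero section of some $\Sym^i T_X\otimes\O_X(dL)$ with $i\geq 1$ and $d<0$, and such a section implies bigness of $T_X$; combined with Corollary~\ref{dsimpsum} this rules out direct summands. (Note only this one direction is needed; the ``if and only if'' you attribute to \cite{Me} is stronger than what is required or known.) So the algebraic half of your argument is sound and matches how the literature organizes things.

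The genuine gap is in what you yourself call the geometric heart, and it is the entire content of the theorem: you never prove that $T_X$ is not big for $d\leq 4$. The computation $\xi^3=c_1(T_X)^2-c_2(T_X)=K_X^2-e(X)=2d-12$ is correct, but as you rightly note it cannot possibly suffice, since $\xi^3=-2<0$ for the quintic del Pezzo whose tangent bundle \emph{is} big; any viable argument must see the difference between $n=4$ and $n=5$ blown-up points, not just the sign of an intersection number. What you offer in place of a proof are two programs --- a study of the cones of $\P(T_X)$ via the $(-1)$-curves on which $\xi$ restricts negatively, and a reduction to twisted symmetric differentials via $T_X\cong\Omega^1_X\otimes\O_X(-K_X)$ --- and you explicitly concede that the decisive step, showing the volume of $\xi$ vanishes precisely when $n\geq 5$, is an obstacle you have not overcome. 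That step is the theorem; establishing it in \cite{Me} requires a genuinely finer analysis of sections of $\Sym^m T_X$ twisted by anti-ample bundles, using the geometry of the blown-up points, and no amount of Chern-class bookkeeping substitutes for it. As written, the proposal proves the easy implication and sets up a framework, but not the statement.
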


\begin{cor}
No homogeneous coordinate ring of a del Pezzo of degree $\leq 4$ can be a direct summand of a regular ring.
\end{cor}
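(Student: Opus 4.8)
The plan is a one-step proof by contradiction that glues together two facts already in place. The first is that regular rings are $D$-simple and, by Corollary~\ref{dsimpsum}, that any direct summand of a $D$-simple ring is again $D$-simple. The second is the cited theorem of \cite{Me}, whose conclusion is precisely that \emph{no} homogeneous coordinate ring of a del Pezzo surface of degree $\leq 4$ can be $D$-simple — this is where all the geometric content (the non-bigness of $T_X$) has already been spent.

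Concretely, I would argue as follows. Let $R$ be any homogeneous coordinate ring of a del Pezzo surface $X$ with $d \leq 4$, and suppose toward a contradiction that $R$ is a direct summand of a regular ring $S$, with $R$-linear splitting $\sigma: S \to R$. Since $R$ is a $\C$-algebra and $R \hookrightarrow S$, the ring $S$ is a regular $\C$-algebra and hence $D$-simple; applying Corollary~\ref{dsimpsum} to $\sigma$ then forces $R$ to be $D$-simple as well. This directly contradicts \cite{Me}. Because the theorem of \cite{Me} applies to every homogeneous coordinate ring of $X$ (not merely the anticanonical one $S(X,-K_X)$), the same contradiction rules out all of them, yielding the corollary in full.

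The only bookkeeping to watch is the base field: $D$-simplicity means simplicity over the ring of $\C$-linear differential operators, so I would note that the $D$-simplicity of $S$, the hypothesis of Corollary~\ref{dsimpsum}, and the non-$D$-simplicity coming from \cite{Me} are all taken over the same base $\C$ — which they are, since $R \hookrightarrow S$ is a map of $\C$-algebras and $\sigma$ is $R$-linear. Consequently there is no real obstacle internal to the corollary itself; the entire difficulty has been front-loaded into the cited result that $T_X$ is not big and its translation into a statement about differential operators.
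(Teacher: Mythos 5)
Your proof is correct and is exactly the paper's argument: the corollary follows by combining the non-$D$-simplicity statement from \cite{Me} with Corollary~\ref{dsimpsum} and the $D$-simplicity of regular rings. Nothing further is needed.
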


The only outstanding case then is the degree-5 del Pezzo, which we handle here:

\begin{thm}
In any characteristic, the homogeneous coordinate ring of the quintic del Pezzo $X_5$ (under any embedding) is a direct summand of the coordinate ring of the Grassmannian $G(2,5)$, and in characteristic 0 thus a direct summand of a regular ring.
\end{thm}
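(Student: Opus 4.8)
The plan is to realize every homogeneous coordinate ring of $X_5$ as a Veronese-type subring of the Cox ring $R(X_5)$, to identify that Cox ring with the homogeneous coordinate ring $\C[G(2,5)]$, and then to split off each coordinate ring by a grading projection, deducing the characteristic-$0$ statement by composing with the $\SL_2$-Reynolds splitting.

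First I would recall the classical description of the Cox ring of the quintic del Pezzo. Since $X_5$ is $\P^2$ blown up at four general points, $\Pic(X_5)\cong \Z^5$ is free and $X_5$ is smooth (hence factorial), so the Cox ring $R(X_5)$ of Section~\ref{cox} is defined. It carries ten distinguished sections, one for each of the ten $(-1)$-curves, and the content of the Batyrev--Popov/Skorobogatov description is that these ten sections generate $R(X_5)$ subject precisely to five quadratic (Plücker) relations; the resulting presentation is isomorphic, as a ring, to the homogeneous coordinate ring $\C[G(2,5)]$ of the Grassmannian in its Plücker embedding in $\P^9$. The $\Pic(X_5)=\Z^5$-grading on $R(X_5)$ thereby becomes a refinement of the standard grading on $\C[G(2,5)]$, under which the ten Plücker coordinates carry the ten $(-1)$-curve classes. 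I would emphasize that this presentation holds in arbitrary characteristic.

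Next, for any projectively normal embedding of $X_5$ by a very ample divisor $L\sim b_1D_1+\dots+b_5D_5$, the homogeneous coordinate ring is the section ring $S(X_5,L)=\bigoplus_m H^0(X_5,\O_{X_5}(mL))$ (a non-normal coordinate ring could not be a direct summand, so this is the only case to treat), which by Section~\ref{cox} sits inside $R(X_5)$ as the sum of the graded pieces indexed by $\Z\cdot(b_1,\dots,b_5)\subset \Z^5$; the negative multiples contribute nothing since $L$ is ample. The projection $\pi\colon R(X_5)\to S(X_5,L)$ keeping exactly these graded pieces and killing the rest is $S(X_5,L)$-linear and fixes $S(X_5,L)$: if $s$ is homogeneous of degree $mb$ and $x$ of degree $a$, then $\pi(sx)=s\pi(x)$ because $a+mb\in \Z b$ if and only if $a\in \Z b$. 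Thus $\pi$ is a graded splitting, exhibiting $S(X_5,L)$ as a direct summand of $R(X_5)\cong \C[G(2,5)]$. This argument is purely combinatorial and so holds in every characteristic; equivalently, $S(X_5,L)$ is the invariant ring of the residual torus action of Section~\ref{cox}, and tori are linearly reductive in all characteristics.

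Finally, in characteristic $0$ I would observe that $\C[G(2,5)]$ is itself a direct summand of a polynomial ring: by the first fundamental theorem of invariant theory, the coordinate ring of the affine cone over $G(2,5)$ is the ring of $\SL_2(\C)$-invariants of $\C[M_{2\times 5}]$ acting by left multiplication, the invariants being generated by the $2\times 2$ minors with the Plücker relations as their only relations. Since $\SL_2(\C)$ is linearly reductive, the Reynolds operator splits $\C[M_{2\times 5}]^{\SL_2}\hookrightarrow \C[M_{2\times 5}]$, and $\C[M_{2\times 5}]$ is regular. Composing this with $\pi$ via Lemma~\ref{dirsum2} then shows $S(X_5,L)$ is a direct summand of a regular ring. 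The main obstacle is the first step: one must invoke and correctly grade the Cox-ring--Grassmannian identification and confirm it is characteristic-free, since everything downstream is formal once the $\Z^5$-grading on $\C[G(2,5)]$ placing each coordinate ring as a Veronese subring is in hand.
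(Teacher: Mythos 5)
Your proposal is correct and follows essentially the same route as the paper: identify the Cox ring of $X_5$ with the Pl\"ucker coordinate ring of $G(2,5)$ (characteristic-free), realize each homogeneous coordinate ring as a Veronese-type subring of it and split it off, then in characteristic $0$ compose with the $\SL_2$-Reynolds splitting of $\C[M_{2\times 5}]^{\SL_2}\hookrightarrow \C[M_{2\times 5}]$ via Lemma~\ref{dirsum2}. The only cosmetic difference is that you produce the splitting by direct projection onto the graded pieces indexed by $\Z\cdot(b_1,\dots,b_5)$, whereas the paper realizes the same subring as $\G_m^5$-invariants of $R(X_5)[t]$ and invokes linear reductivity of the torus --- two equivalent mechanisms that the paper itself identifies in its Cox-ring preliminaries.
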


\begin{proof}
We want to consider the Cox ring $R$ of $X_5$. Recall that $X_5$ is the blowup of $\P^2$ at four general points $\set{p_1,\dots,p_4}$. We start by choosing a basis ${H,E_1,\dots,E_4}$ for $\Pic(X_5)\cong \Z^5$, where $H$ is the pullback of the hyperplane class from $\P^2$ and $E_i$ the exceptional divisor over $p_i$.
By results of \cite[Corollary~35]{STV} (proving a conjecture of \cite{BP}), 
the Cox ring $R$ of $X_5$ is isomorphic to $\C[f_{12},f_{13},f_{14},f_{23},f_{24},f_{34},e_1,e_2,e_3,e_4]$
modulo the $4\times 4$ Pfaffians of 
the generic skew-symmetric matrix
$$\begin{pmatrix}
     0&f_{12}&f_{13}&f_{14}&f_{23}\\
     -f_{12}&0&f_{24}&f_{34}&e_1\\
     -f_{13}&-f_{24}&0&e_2&e_3\\
     -f_{14}&-f_{34}&-e_2&0&e_4\\
     -f_{23}&-e_1&-e_3&-e_4&0\\
     \end{pmatrix}.
$$
In this presentation, $f_{ij}$ is the class of a line passing $p_i$ and $p_j$, and $e_i$ is the exceptional divisor over $p_i$.
$R$ is $\Z^{\oplus 5}$-graded: 
for example, $f_{12}$ corresponds to a divisor in the linear series $|\O_{X_5}(H-E_1-E_2)|$, and thus has degree $(1,-1,-1,0,0)$.
$\G_m^5$ acts on $R$ via
$$(\lambda_0,\dots,\lambda_4)\cdot (f_{ij}) = \lambda_0\lambda_i^{-1} \lambda_j^{-1}f_{ij},\quad (\lambda_0,\dots,\lambda_4)\cdot (e_i) = \l_i   e_{i}
,$$ 

We know from the general theory of Cox rings discussed in Section~\ref{cox} that we can recover the various section rings $\bigoplus_m H^0(X_5,\O_X(mD))\subset R$ for $D$ a divisor on $X_5$ as rings of invariants of $\G_m^5$-actions on $R[t]$.
For the sake of making things explicit, we recall how one
recovers the anticanonical embedding corresponding to the divisor $-K_{X_5}$: since $-K_{X_5}=3H-E_1-\dots-E_4$, and thus has degree $(3,-1,-1,-1,-1)$, we extend the $\G_m^5$-action to $R[t]$ by letting 
$$(\lambda_0,\dots,\lambda_4)\cdot (t) = \lambda_0^{-3}\lambda_1^{1}\dots\lambda_4^{1}t.$$
The ring of invariants of $R[t]$ under this action is then isomorphic to the anticanonical section ring $\bigoplus_m H^0(\O_{X_5}(-mK_{X_5})$. The latter is thus generated by  the 
six elements
$$f_{12}f_{34}^2e_3e_4,f_{13}
      f_{23}f_{24}e_2e_3,f_{13}f_{23}
      f_{34}e_3^2,f_{13}f_{24}^2e_2e_4
      ,f_{13}f_{24}f_{34}e_3e_4,f_{14}
      f_{24}f_{34}e_4^2$$
of $H^0(\O_{X_5}(-K_{X_5})$.

Thus, we have that 
$\bigoplus_m H^0(\O_{X_5}(-mK_{X_5})$ is a ring of invariants of $R[t]$ under the linearly reductive (in any characteristic!) group $\G_m^5$, and thus a direct summand of $R[t]$. But in characteristic 0 $R$, and thus $R[t]$, is itself a direct summand a polynomial ring: $R$ is the coordinate ring of the Grassmannian $G(2,5)$, and is thus the invariant ring of the special linear group.  Thus, we are done.
\end{proof}

As a consequence of the proof, we answer a question of \cite{Hara2}:

\begin{cor}
Let $\Char k > 2$.
The homogeneous coordinate ring of a quintic del Pezzo (under any embedding) has the (graded) FFRT property.
\end{cor}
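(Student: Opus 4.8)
The plan is to deduce the corollary formally from the preceding theorem, using only the good behaviour of graded FFRT under direct summands and under adjoining a variable; the one genuinely new input is that the Cox ring itself has graded FFRT once $\Char k>2$. First I would recall the construction from the proof of the preceding theorem: for any very ample divisor $D$ on $X_5$, the corresponding (projectively normal) homogeneous coordinate ring is the section ring $A=\bigoplus_m H^0(X_5,\O_{X_5}(mD))$, and this is realized as the ring of invariants of a $\G_m^5$-action on $R[t]$, where $R=R(X_5)$ is the Cox ring, i.e.\ the coordinate ring of $G(2,5)$. Since a torus is linearly reductive in every characteristic, the inclusion $A\hookrightarrow R[t]$ is a (generally non-finite) split inclusion of $\N$-graded $k$-algebras with $A_0=k$; this disposes of "any embedding" uniformly, since every such coordinate ring arises as such an $A$. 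By the quoted proposition of \cite{SVdB}, which crucially does not require module-finiteness, a graded direct summand of a graded-FFRT ring again has graded FFRT, so it suffices to show that $R[t]$ has graded FFRT.

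Next I would reduce from $R[t]$ to $R$, using that adjoining a polynomial variable preserves graded FFRT. Writing $q=p^e$, the module $F^e_*(R[t])$ decomposes, up to degree shifts, as $\bigoplus_{i=0}^{q-1}(F^e_*R)[t]$, so each module $M$ appearing in a decomposition of $F^e_*R$ contributes the single $R[t]$-module $M[t]$; hence only finitely many graded $R[t]$-modules occur up to shift in the decompositions of the $F^e_*(R[t])$ provided the same holds for the $F^e_*R$. Thus $R[t]$ has graded FFRT as soon as $R$ does, and the entire problem is reduced to the single assertion that the coordinate ring $R$ of $G(2,5)$ has graded FFRT when $\Char k>2$.

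The hard part, and the only non-formal step, is precisely this last assertion, and it is where the hypothesis $p>2$ enters: in characteristic $p$ the ring $R$ is no longer visibly a direct summand of a polynomial ring, since the $\SL$-action exhibiting it as an invariant ring is no longer linearly reductive (by Nagata's theorem the only linearly reductive groups available in characteristic $p$ are essentially tori-by-finite, which cannot produce the Grassmannian cone). I would therefore establish the graded FFRT of $R$ either by citing the known FFRT of the $G(2,5)$ cone in good characteristic, or by exhibiting $R$ directly as a (possibly non-finite) graded direct summand of an explicitly FFRT ring valid for $p>2$ and once more invoking the \cite{SVdB} proposition; the restriction $p>2$ is inherited verbatim from whichever of these yields FFRT of $G(2,5)$. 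Granting FFRT of $R$, combining it with the two reductions above yields graded FFRT of $A$ for every embedding, completing the proof.
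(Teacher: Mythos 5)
Your proof is correct and takes essentially the same route as the paper: the paper likewise combines the split inclusion of the coordinate ring into $R[t]$ coming from the preceding theorem, descent of graded FFRT along (not necessarily finite) graded splittings via \cite[Proposition~3.1.6]{SVdB}, stability of graded FFRT under adjoining $t$, and the known graded FFRT of the Pl\"ucker coordinate ring of $G(2,5)$ in characteristic $p>2$, which is exactly the paper's citation of \cite[Theorem~1.4]{RSB} (the first of your two proposed options for the final input). Your explicit decomposition of $F^e_*(R[t])$ into copies of $(F^e_*R)[t]$ merely fills in a step the paper asserts without proof.
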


\begin{proof}
\cite[Theorem~1.4]{RSB} implies that the coordinate ring of the Grassmannian $G(2,5)$ under its Pl\"ucker embedding has the graded FFRT property. Thus, the same remains true after adjoining~$t$. Then \cite[Proposition~3.1.6]{SVdB} implies that the graded FFRT property descends to direct summands, so the coordinate ring of a quintic del Pezzo has FFRT as well.
\end{proof}

This also recovers the  result of \cite{Hara1} that the quintic del Pezzo has the global FFRT property (although his approach has the distinct advantage of actually describing the summands of $F_*^e\O_X$, whereas the approach here does not yield this information).

\begin{rem}
The preceding discussion also establishes that the homogeneous coordinate ring of a degree-5 del Pezzo is $D$-simple, in either characteristic 0 or characteristic $p$.
 In characteristic 0, a polynomial ring is obviously $D$-simple, so the above immediately establishes $D$-simplicity of the homogeneous coordinate ring of the degree-5 del Pezzo. 
This in turn implies that the tangent bundle of the degree-5 del Pezzo is big, and
we thus recover one of the results of \cite{HLS}.
In characteristic $p$, the coordinate ring is strongly $F$-regular (for infinitely many $p$ at least), and thus $D$-simple (see \cite[Theorem~2.2.(4)]{Smith}). 
\end{rem}

\begin{rem}
Note 
that we do not obtain directly that the homogeneous coordinate ring of a del Pezzo quintic is a direct summand of a regular ring.
If the homogeneous coordinate ring of $G(2,5)$ were itself known to be a direct summand of a regular ring when $k$ has characteristic $p$, then we would get this. But this is not known, and in particular the known presentation of 
this coordinate ring of a Grassmannian as an invariant ring
has been shown \emph{not} to be pure, and thus not to split; see \cite{HJPS}.
\end{rem}

\begin{rem}
It would be interesting to know what holds true in characteristic $p$.
It will still be the case that homogeneous coordinate rings of toric del Pezzos will be direct summands of regular rings. However, the question of whether the quintic del Pezzo is a direct summand of a regular ring is already subtle: the proof above shows only that it's a direct summand of an invariant ring of a reductive group, and it's unknown whether this can be a direct summand of a regular ring.
Moreover, the homogeneous coordinate rings of del Pezzo surfaces of any degree will be strongly $F$-regular and thus $D$-simple, and so the criteria used for the $d\leq 4$ case above is not useful.
It seems that the expectation is that degree-4 del Pezzos will not have coordinate rings with FFRT, and thus not be direct summands, but as far as we know nothing is known in this direction.
\end{rem}

\section{Smooth hypersurfaces of degree $d\geq 3$ and prime Fano threefolds}
\label{hypersurfaces}
In this section, we discuss the situation for smooth hypersurfaces of degree $d\geq 3$ and prime Fano threefolds (i.e., Fano threefolds with Picard number 1).
The hypersurface case is completely settled, without finiteness or homogeneity assumptions, by combining results of \cite{HLS} and \cite{Hsiao}. Although nothing in this section is original except for the study of the quadric threefold, the conclusions do not appear elsewhere in the literature and so we include them here.

\begin{prop}
Let $X$ be a smooth complex hypersurface of degree $d\geq 3$ in $\P^n$, and let $R=\C[x_0,\dots,x_n]/(F)$ be its homogeneous coordinate ring. $R$ cannot be a direct summand of any regular ring.
\end{prop}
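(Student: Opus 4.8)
The plan is to obstruct via $D$-simplicity, exactly as in the del Pezzo case of degree $\leq 4$. First I would suppose, toward a contradiction, that $R$ is a direct summand of some regular ring $S$. Since regular rings are $D$-simple, Corollary~\ref{dsimpsum} immediately forces $R$ to be $D$-simple. Next I would identify $R$ with a section ring: because $X$ is a smooth hypersurface it is projectively normal, so $R=\C[x_0,\dots,x_n]/(F)$ coincides with $S(X,\O_X(1))=\bigoplus_m H^0(X,\O_X(m))$, the section ring of the very ample line bundle $\O_X(1)$. This reduces the claim to showing that no section ring of an ample line bundle on $X$ is $D$-simple.

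The first main input I would invoke is the link between $D$-simplicity of section rings and positivity of the tangent bundle. By \cite{HLS}, $D$-simplicity of the section ring $S(X,L)$ of an ample line bundle $L$ on a smooth projective variety forces $T_X$ to be big; this is precisely the implication underlying the del Pezzo result of \cite{Me} used above. Taking the contrapositive, it suffices to show that $T_X$ is \emph{not} big.

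The second --- and genuinely harder --- input is the non-bigness of $T_X$ for smooth hypersurfaces of degree $d\geq 3$, which is the content of \cite{Hsiao}. Here bigness of $T_X$ means bigness of $\O_{\P(T_X)}(1)$, equivalently top-order growth (of order $m^{2n-3}$, since $\dim\P(T_X)=2n-3$) of $\dim_\C H^0(X,\Sym^m T_X)$. To estimate this growth I would work from the normal-bundle and Euler sequences
$$
0 \to T_X \to T_{\P^n}|_X \to \O_X(d)\to 0, \qquad
0 \to \O_X \to \O_X(1)^{\oplus(n+1)} \to T_{\P^n}|_X \to 0,
$$
bounding $H^0(X,\Sym^m T_X)$ by comparison with the symmetric powers of the (explicitly understood) restricted tangent bundle $T_{\P^n}|_X$; the degree-$d$ twist in the normal-bundle sequence is exactly what forces sub-maximal growth once $d\geq 3$. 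Combining these two inputs with the reduction above produces the desired contradiction.

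I expect the main obstacle to be this second step. The reduction to $D$-simplicity is formal given Corollary~\ref{dsimpsum} and projective normality of hypersurfaces, and the passage from $D$-simplicity to bigness of $T_X$ is a cited black box; the real geometric content --- and the only place the hypothesis $d\geq 3$ is used --- is the asymptotic vanishing/estimate for $H^0(X,\Sym^m T_X)$ that underlies \cite{Hsiao}.
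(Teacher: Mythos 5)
Your proof is correct and is essentially identical to the paper's: both obstruct via $D$-simplicity (Corollary~\ref{dsimpsum}), pass from $D$-simplicity of the section ring to bigness of $T_X$, and contradict this with non-bigness of $T_X$ for smooth hypersurfaces of degree $d\geq 3$. One correction: your two attributions are swapped --- the implication ``$D$-simple section ring implies $T_X$ big'' is \cite[Theorem~1.2]{Hsiao}, while the non-bigness of $T_X$ for smooth hypersurfaces of degree $d\geq 3$ is \cite[Theorem~1.4]{HLS}; both are used as black boxes in the paper, so your sketched Euler/normal-bundle estimate, while a plausible heuristic for the latter, is not part of the argument.
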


This follows by combining Corollary~\ref{dsimpsum} (due to \cite[Proposition~3.1]{Smith}), which states that direct summands of regular rings are $D$-simple, with
the
following theorems:

\begin{thm}[{\cite[Theorem~1.2]{Hsiao}}]
Let $X$ be a smooth projective variety and $L$ an ample line bundle. If the section ring $\bigoplus H^0(X,L^{\otimes m})$ is $D$-simple, then the tangent bundle $T_X$ is big.
\end{thm}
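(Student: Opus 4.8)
The plan is to convert the purely algebraic hypothesis of $D$-simplicity into a single explicit section witnessing bigness of $T_X$, by passing through the symbol calculus of differential operators on the section ring. Write $R=\bigoplus_{m\ge 0}H^0(X,L^m)$ and let $C=\Spec R$ be the affine cone, so that the punctured cone $C^\circ=C\setminus\{\mathfrak m\}$ is the complement of the zero section in the total space of $L^{-1}$, a $\G_m$-bundle $\pi\colon C^\circ\to X$ with $\Gamma(C^\circ,\O)=R$ (using $H^0(X,L^m)=0$ for $m<0$). The grading on $R$ induces a $\Z$-grading on $D_R$, where $\delta$ is homogeneous of degree $d$ if $\delta(R_m)\subseteq R_{m+d}$ for all $m$. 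The key structural input is the standard correspondence for section rings of ample line bundles, identifying a homogeneous operator of degree $-M$ and order $\le k$ with a compatible family of twisted differential operators $L^{a}\to L^{a-M}$ on $X$; crucially, the order-$k$ symbol of such an operator lies in $H^0(X,\Sym^k T_X\otimes L^{-M})$ independently of $a$, since $\mathcal{H}om(L^{a},L^{a-M})=L^{-M}$.

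First I would use $D$-simplicity to manufacture a degree-lowering operator. Choosing any nonzero homogeneous $f\in R_M$ with $M\ge 1$ (possible since $L$ is ample), $D$-simplicity gives $D_R\cdot f=R$, so some $\delta\in D_R$ satisfies $\delta(f)=1$. Passing to the degree-$(-M)$ homogeneous component $\delta_{-M}$, matching degrees forces $\delta_{-M}(f)=1$, so $\delta_{-M}$ is a nonzero operator of degree $-M<0$. Since $R$ has no elements of negative degree, there are no degree-$(-M)$ operators of order $0$ (these would be multiplication by $R_{-M}=0$), so $\delta_{-M}$ has order $k\ge 1$. Feeding this into the symbol correspondence above, its principal symbol is a nonzero element of $H^0(X,\Sym^k T_X\otimes L^{-M})$ for some $k\ge 1$ and $M\ge 1$.

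It then remains to deduce bigness of $T_X$ from a single nonzero section $s\in H^0(X,\Sym^k T_X\otimes L^{-M})$ with $k,M\ge 1$. Such an $s$ is a nonzero sheaf map $L^{M}\to \Sym^k T_X$, equivalently a nonzero section of $\O_{\P(T_X)}(k)\otimes\pi^* L^{-M}$, giving an effective divisor $D$ in the class $k\xi-\pi^*(M\ell)$, where $\xi=\O_{\P(T_X)}(1)$ and $\ell=c_1(L)$. Since $\xi$ is $\pi$-relatively ample, $H:=\xi+c_0\pi^*(M\ell)$ is ample on $\P(T_X)$ for $c_0\gg 0$, and then $(1+c_0k)\xi-H=c_0\bigl(k\xi-\pi^*(M\ell)\bigr)=c_0 D$ is effective; hence $(1+c_0k)\xi=H+c_0D$ is the sum of an ample and an effective class, so $\xi$ is big and $T_X$ is big.

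The main obstacle is the symbol correspondence of the first paragraph: making precise that a homogeneous degree-$(-M)$ element of $D_R$ of order exactly $k$ produces a nonzero global section of $\Sym^k T_X\otimes L^{-M}$. This requires the theory of graded differential operators on the section ring of an ample line bundle — in particular restricting operators to the smooth punctured cone $C^\circ$, comparing $D_R$ with twisted differential operators on $X$, and verifying that order and symbol behave as expected across the vertex singularity. The two outer steps, by contrast, are short: the production of $\delta_{-M}$ is immediate from $D$-simplicity together with the absence of negative-degree elements, and the passage from one negatively twisted section to bigness is the elementary positivity argument above.
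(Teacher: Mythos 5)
The paper offers no proof of this statement---it is quoted directly from \cite[Theorem~1.2]{Hsiao}---so your proposal can only be measured against Hsiao's argument, whose skeleton (a negative-degree homogeneous operator from $D$-simplicity, then a section of some $\Sym^k T_X\otimes L^{-M}$ with $k,M\ge 1$, then bigness) it correctly reconstructs; your first and last steps are sound as written. The gap is the middle step, which you flag as ``the main obstacle'' but state in a form that is actually \emph{false}: a homogeneous operator of degree $-M$ and order exactly $k$ need not yield a nonzero section of $\Sym^k T_X\otimes L^{-M}$. The culprit is the Euler vector field $E$ generating the $\G_m$-action on the punctured cone $C^\circ$: it is global, $\G_m$-invariant (degree $0$), of order $1$, and invisible on $X$. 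If $\delta'$ is any degree-$(-M)$ operator of order $k-1$, then $\delta'\circ E$ has degree $-M$ and order exactly $k$ on $C^\circ$, yet its induced operators $L^a\to L^{a-M}$ equal $a\,\delta'_a$ and hence have order at most $k-1$: the ``order-$k$ symbol independent of $a$'' that you define is identically zero, because $\sigma_k(\delta'\circ E)=\sigma_{k-1}(\delta')\cdot\sigma_1(E)$ lies in the kernel of the projection $\Sym^k T_{C^\circ}\to\pi^*\Sym^k T_X$. Concretely, for $X=\P^1$ and $R=\C[x,y]$, the operator $\delta=\partial_x(x\partial_x+y\partial_y)$ is homogeneous of degree $-1$ and order $2$, but acts on $H^0(X,\O_X(a))$ as $a\,\partial_x$, which has order $1$. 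So ``order and symbol behave as expected'' across the cone construction is precisely what fails, and your argument as written can output the zero section.

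The repair is short and keeps your strategy intact. Regard $\sigma_k(\delta_{-M})$ not as a symbol but merely as a nonzero $\G_m$-weight-$(-M)$ global section of $\Sym^k T_{C^\circ}$, and filter $\Sym^k T_{C^\circ}$ by $F^j=E^j\cdot\Sym^{k-j}T_{C^\circ}$; since $E$ is nowhere vanishing and invariant, the quotients $F^j/F^{j+1}\cong\pi^*\Sym^{k-j}T_X$ are weight-preserving, and weight-$(-M)$ global sections of $\pi^*\Sym^{k-j}T_X$ are exactly $H^0(X,\Sym^{k-j}T_X\otimes L^{-M})$. Let $j$ be maximal with $\sigma_k(\delta_{-M})\in\Gamma(F^j)$; by left-exactness of global sections, the image of $\sigma_k(\delta_{-M})$ in $\Gamma(F^j/F^{j+1})_{-M}=H^0(X,\Sym^{k-j}T_X\otimes L^{-M})$ is nonzero, and $k-j=0$ is impossible because $H^0(X,L^{-M})=R_{-M}=0$ for $M\ge 1$. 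You therefore obtain a nonzero section of $\Sym^{k'}T_X\otimes L^{-M}$ for some $1\le k'\le k$, which is all your closing positivity argument on $\P(T_X)$ requires. With this correction the proof is complete and agrees in substance with Hsiao's.
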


\begin{thm}[{\cite[Theorem~1.4]{HLS}}]
If $X$ is a smooth hypersurface of degree $d\geq 3$ in $\P^n$, $T_X$ is not big.
\end{thm}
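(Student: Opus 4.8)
The plan is to reinterpret bigness of $T_X$ as positivity of a line bundle on the projectivized tangent bundle and then to obstruct that positivity using the rational curves lying on $X$. Recall that $T_X$ is big precisely when $\xi:=\O_{\P(T_X)}(1)$ is big, equivalently when $h^0(X,\Sym^m T_X)=h^0(\P(T_X),\xi^{\otimes m})$ grows like $m^{\dim\P(T_X)}=m^{2\dim X-1}$. To prove non-bigness I would use the following standard criterion: if a family of irreducible curves dominates $\P(T_X)$ and its general member $\widetilde C$ satisfies $\xi\cdot\widetilde C\le 0$, then $\xi$ is not big. Indeed, a big $\xi$ can be written as an ample $\Q$-class plus an effective class supported on the augmented base locus $B_+(\xi)$, a proper closed subset; a general member of a dominating family avoids $B_+(\xi)$ and hence has strictly positive $\xi$-degree, a contradiction. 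Thus it suffices to produce a dominating family of curves on $\P(T_X)$ of nonpositive $\xi$-degree.

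The natural source of such curves is the restriction of $T_X$ to the lines on $X$ (or, when $X$ has index one, to its minimal rational curves). For a line $\ell\subset X$ one has $T_\ell=\O_\ell(2)\subset T_X|_\ell$, while $\deg T_X|_\ell=(-K_X)\cdot\ell=n+1-d$ and $\rank T_X|_\ell=\dim X=n-1$; hence the normal bundle $N_{\ell/X}$ has rank $n-2$ and degree $n-1-d$. Writing $N_{\ell/X}=\bigoplus_i\O_\ell(a_i)$, each quotient line bundle $N_{\ell/X}\to\O_\ell(a_i)$, and hence each quotient $T_X|_\ell\to\O_\ell(a_i)$, of nonpositive degree determines a lift $\widetilde\ell\subset\P(T_X)$, a section of $\P(T_X)|_\ell\to\ell$, with $\xi\cdot\widetilde\ell=a_i\le 0$. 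As $d$ grows the average slope $(n-1-d)/(n-2)$ of $N_{\ell/X}$ drops, so for $d\ge 3$ the splitting type of $T_X$ along a general line acquires several nonpositive summands; letting $\ell$ range over the lines through a general point of $X$ and varying the nonpositive quotients, the corresponding lifts sweep out a subvariety $Z\subseteq\P(T_X)$.

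The crux — and the step I expect to be hardest — is to show that $Z$ is all of $\P(T_X)$, i.e.\ that these nonpositive lifts form a \emph{dominating} family; equivalently, that at a general point $x\in X$ the nonpositive directions arising from lines through $x$ fill the projectivized tangent space $\P(T_{X,x})$. This is a genuinely $d$-dependent statement, and it is where the threshold $d=3$ must enter: the quadric $d=2$ already carries a single nonpositive direction along each of its lines (one has $N_{\ell/Q}=\O_\ell(1)^{\oplus(n-3)}\oplus\O_\ell$) yet has big tangent bundle, so merely exhibiting nonpositive lifts is not enough — one must show that for $d\ge 3$ they dominate. I would establish this by a dimension count on the variety of lines (or minimal rational curves) through a general point of $X$, combined with an analysis of the second fundamental form that governs the $a_i$, treating the Fano range $3\le d\le n$ and the Calabi--Yau case $d=n+1$ directly. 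For the remaining higher-degree cases, where $X$ is of general type and not uniruled and may contain no covering family of rational curves at all, non-bigness of $T_X$ is softer and can be handled separately, since there is then no family of curves through which positivity of $\xi$ could propagate.
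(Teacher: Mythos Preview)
The paper does not supply its own proof of this statement: it is quoted as \cite[Theorem~1.4]{HLS} and invoked as a black box, together with Hsiao's theorem, to deduce that no coordinate ring of a smooth hypersurface of degree $\ge 3$ is $D$-simple. There is therefore nothing in this paper to compare your argument against.

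For what it is worth, your strategy in the Fano range --- lift lines $\ell\subset X$ to sections of $\P(T_X)|_\ell$ via nonpositive quotients of $T_X|_\ell$, and show these lifts dominate $\P(T_X)$ so that $\xi=\O_{\P(T_X)}(1)$ cannot be big --- is exactly the ``total dual VMRT'' method underlying \cite{HLS}, and the step you flag as hardest (dominance of the nonpositive lifts, which fails for $d=2$ and holds for $d\ge 3$) is indeed the heart of their argument. So your outline is on the right track there, though you have not actually carried out the dimension count or the second-fundamental-form analysis.

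Your treatment of the range $d\ge n+2$, however, is a genuine gap. The sentence ``there is then no family of curves through which positivity of $\xi$ could propagate'' is not an argument for non-bigness; the absence of a covering family of rational curves on $X$ does not by itself prevent $h^0(X,\Sym^m T_X)$ from growing maximally, and ``softer'' and ``can be handled separately'' are not proofs. You still need a positive reason --- e.g.\ showing $\xi$ is not even pseudo-effective via Miyaoka's generic semipositivity of $\Omega_X$ for non-uniruled $X$, or noting that smooth hypersurfaces of dimension $\ge 3$ are covered by lines regardless of degree and continuing the same analysis. As written, the last paragraph waves its hands precisely where an actual argument is required.
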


\begin{rem}
Again, the smoothness assumptions here are crucial. We've seen already that (for example) the toric cubic surface $X=V(x^3-yzw)\subset \P^3$ is a finite graded direct summand of a polynomial ring. In fact, one can check also that $H^0(\Sym^3(T_X)(-1))\neq 0$, so that $T_X$ is big.

In fact, generalizing this example yields degree-$d$ hypersurfaces in $\P^n$ for any $d\leq n$ and $n\geq 3$ that are finite direct summands of regular rings: 
we define $X_{n,d}=V(x_0^d-x_1\cdots x_d)\subset \P^n$ for any $d\leq n$, and note that this is a normal  hypersurface (the singularities are codimension-2 quotient singularities). Note moreover that $X_{n,d}$ is the image of $\P^{n-1}$  under the finite morphism $[u_1,\dots,u_{n}]\mapsto [u_1\cdots u_d, u_1^d,\dots,u_n^d]$, and thus the corresponding finite ring inclusion 
$$
\frac{\C[x_0,\dots,x_n]}{x_0^d-x_1\cdots x_d} \hookrightarrow \C[u_1,\dots,u_n]
$$
automatically splits.
One can similarly obtain complete intersection examples by taking the same defining equations in disjoint sets of variables.
\end{rem}

All the examples obtained in this way are toric. This leads to the following:

\begin{quest}
Let $X\subset\P^n$ be a normal hypersurface of degree $d\leq n$. If there is a finite morphism $\P^n\to X$, must $X$ be toric?
\end{quest}

Similarly, by using result from \cite{fanothree}, we can almost completely classify the prime Fano threefolds whose coordinate rings are direct summands of regular rings:

\begin{prop}
Let $X$ be a smooth Fano threefold of Picard number 1. Then the coordinate ring of $X$ is a direct sum of a regular ring if $X$ is $\P^3$ or the quadric threefold, and these are the only cases where this is true, except for possibly the quintic del Pezzo threefold.
\end{prop}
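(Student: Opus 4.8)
The plan is to combine a \emph{necessary} condition for being a direct summand with a known classification of tangent-bundle bigness, and then to dispatch the two affirmative cases by hand. Since $X$ has Picard number $1$, write $\Pic(X)=\Z\cdot L$ with $L$ ample; every homogeneous coordinate ring of $X$ is a section ring $\bigoplus_m H^0(X,\O_X(mL'))$ for some ample $L'=dL$, and each such ring is a Veronese subring of the primitive section ring $R=\bigoplus_m H^0(X,\O_X(mL))$. A Veronese $R_{(d)}$ is always a direct summand of $R$ (it is the invariant ring of the $\mu_d$-action, linearly reductive in characteristic $0$), so by Lemma~\ref{dirsum2} if $R$ is a direct summand of a regular ring then so is every $R_{(d)}$; meanwhile, for the negative direction the theorem of \cite{Hsiao} applies to the section ring of \emph{any} ample bundle at once. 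Thus it suffices to analyze bigness of $T_X$ together with the primitive ring $R$.

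For the negative direction, I would argue the contrapositive. If some coordinate ring of $X$ is a direct summand of a regular ring, then by Corollary~\ref{dsimpsum} it is $D$-simple, and by the theorem of \cite{Hsiao} this forces $T_X$ to be big. Hence whenever $T_X$ is \emph{not} big, no coordinate ring of $X$ can be a direct summand. By the classification in \cite{fanothree}, among smooth prime Fano threefolds $T_X$ is big only for $\P^3$, the quadric threefold $Q^3$, and the quintic del Pezzo threefold $V_5$; for every other prime Fano threefold $T_X$ fails to be big, so its coordinate ring is not a direct summand of any regular ring. This is the step that relies most heavily on external input, and the precise shape of the conclusion is exactly as strong as the bigness classification of \cite{fanothree}.

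For the two affirmative cases I would argue directly, without invoking bigness. For $\P^3$ the primitive section ring is the polynomial ring $\C[x_0,\dots,x_3]$ itself, which is regular and hence tautologically a direct summand of a regular ring, and the remaining coordinate rings are its Veronese subrings, handled by the first paragraph. For the quadric threefold $Q^3\subset\P^4$ the primitive section ring is precisely the full-rank quadric hypersurface in five variables $\C[x_1,\dots,x_5]/(x_1x_2+x_3x_4-x_5^2)$, which we proved to be a direct summand of a regular ring in Section~\ref{quad}; the higher Veronese coordinate rings again follow by Lemma~\ref{dirsum2}. This identification of the quadric threefold with the five-variable quadric is the only genuinely new ingredient, the rest being assembled from cited results.

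The one case the method cannot settle, and hence the main obstacle to a complete classification, is the quintic del Pezzo threefold $V_5$. Here $T_{V_5}$ is big, so the necessary condition from \cite{Hsiao} is satisfied and our obstruction yields no information; yet, exactly as for the quintic del Pezzo \emph{surface} of Section~\ref{delPezzo}, we have no construction exhibiting the coordinate ring of $V_5$ as an honest direct summand of a regular ring—the natural invariant-theoretic description again only presents it as a summand of an invariant ring of a reductive group. This gap is precisely what the phrase ``except for possibly the quintic del Pezzo threefold'' records.
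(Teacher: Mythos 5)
Your proposal is correct and follows essentially the same route as the paper: the negative cases come from combining Corollary~\ref{dsimpsum} and Hsiao's theorem with the bigness classification of \cite{fanothree}, the quadric threefold is handled by the five-variable quadric result of Section~\ref{quad}, $\P^3$ is trivial, and the quintic del Pezzo threefold is left open. Your explicit Veronese reduction to the primitive section ring is a nice bit of bookkeeping that the paper leaves implicit, but it is not a different method.
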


This follows since \cite{fanothree} says the only prime Fano threefolds with big tangent bundle are $\P^3$, the quadric threefold, and the quintic del Pezzo threefold. We've seen in Section~\ref{quad} that the quadric threefold is in fact a direct summand of a regular ring. The only missing case then is that of the quintic del Pezzo threefold, for which it is not clear what to expect.

\section{Coordinate rings of singular varieties}
\label{singcoh}
The preceding sections examined homogeneous rings $R$ for which $\Proj R$ was smooth. In this section, we develop techniques for when $\Proj R$ is singular.
 We begin with a discussion of an alternative approach in the smooth case, before going into an in-depth analysis of the coordinate rings of singular cubics.

\begin{rem}
Let $X$ be a smooth variety and $R=S(X,L)$ the homogeneous coordinate ring of $X$ under some embedding given by an ample line bundle $L$. Let $R\hookrightarrow \C[x_0,\dots,x_n]$ be a module-finite graded inclusion.
As stated before, we thus get a corresponding finite surjection $\P^n\to X$. There are then induced maps $H^i(X;\C)\to H^i(\P^n;\C)$ on singular cohomology. 
Crucially, these maps are \emph{injections}, which follows from the fact that $X$ and $\P^n$ are K\"ahler manifolds (it is not true for an arbitrary surjection of manifolds). 
But $H^i(\P^n;\C)=\C$ for $i$ even and 0 otherwise. So, we must have that $\dim H^i(X;\C)= 1$ for all $i$ even and is 0 for all $i$ odd. This is a very strong condition on $X$, which should be thought of as some kind of ``cohomological minimality''. For example, the only nondegenerate smooth complete intersections in $\P^n$ satisfying this are odd-dimensional quadric hypersurfaces. So, we almost recover the results of Section~\ref{finsum} on complete intersections, modulo this case. Likewise, there are other smooth varieties with the same cohomology as $\P^n$, but they are somewhat rare.
\end{rem}

The utility of the above approach, despite the fact that it is not as strong in the smooth case as the methods of Section~\ref{finsum}, is that it can be generalized outside the smooth case.

As an example, we will show the following theorem:

\begin{thm}
\label{singdel}
The homogeneous coordinate ring of a singular complex cubic surface is a finite graded direct summand of a regular ring if and only if the surface has quotient singularities of type $3A_2$.
\end{thm}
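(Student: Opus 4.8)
The plan is to convert the algebraic hypothesis into the existence of a finite surjection from $\P^2$, extract a cohomological constraint that pins $X=\Proj R$ down to three singularity types, and then eliminate two of them by an Euler-characteristic computation supplemented by fundamental-group input. First I would set up the reduction: if $R$ is a finite graded direct summand of a regular ring then $R$ is normal, so $X$ is a normal cubic surface and hence has only Du~Val (equivalently, quotient) singularities. As in Section~\ref{finsum}, a module-finite graded inclusion $R\hookrightarrow\C[y_0,y_1,y_2]$ (three variables, since the extension preserves dimension) induces a finite surjection $\P(a_0,a_1,a_2)\to X$, and composing with the finite quotient $\P^2\to\P(a_0,a_1,a_2)$ gives a finite surjection $\pi\colon\P^2\to X$. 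Conversely, any such $\pi$ makes $R$ a finite graded direct summand: pulling back $\O_X(1)$ realizes $R$ as a module-finite subring of a Veronese subring of $\C[u_0,u_1,u_2]$, and the inclusion splits by the trace map since $R$ is normal and we are in characteristic $0$. Thus the theorem reduces to deciding for which $X$ a finite surjection $\pi\colon\P^2\to X$ exists.

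Next I would exploit the topology. Since $X$ has quotient singularities it is a rational homology manifold with a pure Hodge structure, so the transfer argument ($\pi_*\pi^*=\deg\pi$) shows $\pi^*\colon H^i(X;\Q)\hookrightarrow H^i(\P^2;\Q)$ is injective, exactly as in the smooth case discussed earlier in this section. Hence $b_1(X)=b_3(X)=0$ and $b_2(X)\le 1$; as $X$ is a projective surface carrying an ample class, $b_2(X)=1$, so $X$ is a $\Q$-homology $\P^2$. Passing to the minimal resolution $\tilde X\to X$, which is a weak del Pezzo surface of degree $3$ with $b_2(\tilde X)=7$, rationality of the singularities gives $b_2(X)=7-r$ with $r$ the total rank of the Du~Val configuration, so $r=6$. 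By the classification of rank-$6$ Du~Val subconfigurations of the $E_6$ lattice (Borel--de~Siebenthal), the only candidates are $3A_2$, $A_5+A_1$, and $E_6$.

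The $3A_2$ cubic is the toric surface $V(x_0^3-x_1x_2x_3)\cong\P^2/(\Z/3)$, whose coordinate ring is a ring of invariants of the linearly reductive group $\Z/3$ and hence a finite graded direct summand; this gives the ``if'' direction. To rule out $A_5+A_1$ and $E_6$, I would compute the orbifold Euler characteristic $e_{\mathrm{orb}}(X)=e(X)-\sum_p\bigl(1-1/|G_p|\bigr)$, using $e(X)=3$ and the local fundamental-group orders $|G_p|=2,3,6,24$ for $A_1,A_2,A_5,E_6$. For a $\pi$ that is étale over the smooth locus $X\setminus\mathrm{Sing}\,X$, a covering-space Euler-characteristic count forces $\deg\pi=3/e_{\mathrm{orb}}(X)\in\Z_{>0}$; for $A_5+A_1$ this equals $9/5$ and for $E_6$ it equals $72/49$, both non-integral, a contradiction, whereas $3A_2$ yields the genuine degree $3$ realized by $\P^2\to\P^2/(\Z/3)$.

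The hard part will be excluding finite surjections $\pi\colon\P^2\to X$ that ramify in codimension one over the smooth locus, for which the clean covering-space count above breaks down. Here the plan is to invoke the results of \cite{Gurjar} on the fundamental group of the smooth locus of a log del Pezzo surface: controlling $\pi_1(X\setminus\mathrm{Sing}\,X)$, together with the simple connectivity of $\P^2$, rules out the remaining ramified covers onto the $A_5+A_1$ and $E_6$ cubics and completes the classification. Everything else is either the standard reduction to a finite surjection or a finite numerical computation; the genuinely delicate input is this fundamental-group control, which is precisely why the purely topological method yields only a ``nearly complete'' classification on its own.
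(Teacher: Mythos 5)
Your proposal follows essentially the same route as the paper's proof: convert the splitting hypothesis into a finite surjection $\pi\colon\P^2\to X$ (through weighted projective space and the quotient map $\P^2\to\P(a_0,a_1,a_2)$), use injectivity of pullback on cohomology for varieties with quotient singularities to force exactly six exceptional curves on the minimal resolution, reduce to the three configurations $3A_2$, $A_1A_5$, $E_6$, settle $3A_2$ by an invariant-theoretic/toric construction, and cite \cite{Gurjar} for the last two cases --- the paper does exactly this, so that reliance is not a defect. Your internal variations are legitimate, and one adds value: the paper proves injectivity via intersection cohomology and the decomposition theorem (Lemma~\ref{inj}) where you use transfer for $\Q$-homology manifolds; it finds the three candidates from the classification table of singular cubics rather than from Borel--de Siebenthal; and it has no analogue of your orbifold Euler characteristic count, which genuinely eliminates covers that are \'etale over the smooth locus, so that you need \cite{Gurjar} only for covers ramified in codimension one, whereas the paper defers both $A_1A_5$ and $E_6$ to \cite{Gurjar} wholesale. (Two harmless slips: the homogeneous coordinate ring of the $3A_2$ cubic is the invariant ring of $(\Z/3)^2$ acting on three variables --- it is the surface, not its affine cone, that is $\P^2/(\Z/3)$; and your description of Gurjar's method as $\pi_1$-control is a gloss, but you cite rather than reprove his result.)

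The one genuine gap is at the start of your reduction: it is false that a normal cubic surface must have Du Val (equivalently, quotient) singularities. The cone over a smooth plane cubic curve, $V(x_1^3+x_2^3+x_3^3)\subset\P^3$, is a normal singular cubic surface whose vertex is a simple elliptic singularity --- not a quotient singularity, not klt, not even rational. None of your topological tools applies to it: transfer needs a $\Q$-homology manifold, the identity $b_2(X)=b_2(\widetilde{X})-r$ needs rational singularities, and the orbifold count needs local quotient structure. So your argument, as written, never excludes this surface from being a finite graded direct summand. The paper closes precisely this hole with Lemma~\ref{klt} (a module-finite direct summand of a regular ring that is $\Q$-Gorenstein has klt singularities; Boutot's theorem \cite{Boutot} on rational singularities would also suffice), and klt surface singularities are quotient singularities; only after this step is the assumption of quotient singularities justified. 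Inserting that one citation repairs your proof, after which it is correct and parallel to the paper's.
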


This mostly recovers a special case of a theorem of \cite[Lemma~10]{Gurjar} classifying which log del Pezzos (of any degree) are dominated by $\P^2$, though at this point we do rely on that paper to rule out the two cases of cubic surfaces with $A_1A_5$ or $E_6$ singularities.

To prove this theorem, we'll examine the cohomological contributions of the singularities of a cubic surface. We start with some general lemmas:

\begin{lem}
Let $X$ be a complex variety with quotient singularities, and let $\pi:\wtilde X\to X$ be a resolution of singularities. Then the induced morphisms $H^i(X;\C)\to H^i(\wtilde X;\C)$ are injective.
\end{lem}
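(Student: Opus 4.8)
The plan is to exploit the quotient-singularity hypothesis through the topological fact that such an $X$ is a rational homology manifold, which is exactly what lets a transfer (Gysin) argument go through even though $X$ is singular. Since $H^i(X;\C)=H^i(X;\Q)\otimes_\Q\C$ by universal coefficients, it suffices to prove injectivity of $\pi^*:H^i(X;\Q)\to H^i(\wtilde X;\Q)$, and here I may assume $X$ irreducible (pure-dimensional), as in the applications. The essential preliminary step is to establish that $X$ is a $\Q$-homology manifold: locally $X$ is of the form $U/G$ with $U$ smooth and $G$ finite, and since $H^*(U/G;\Q)=H^*(U;\Q)^G$ (the standard averaging/transfer argument, valid because $|G|$ is invertible in $\Q$), the link of a quotient singularity has the rational cohomology of a sphere. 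Consequently $X$ satisfies Poincaré(--Lefschetz) duality over $\Q$, and hence over $\C$ since $\C$ is flat over $\Q$. This is the crucial and really the only nonformal input; everything afterward is duality formalism. In the applications $X$ is projective, so ordinary Poincaré duality suffices, but in the stated generality I would use Borel--Moore homology, which is legitimate since the resolution $\pi$ is proper.

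With duality available on both $X$ and the smooth $\wtilde X$, I would define the Gysin pushforward $\pi_*:H^i(\wtilde X;\Q)\to H^i(X;\Q)$ as the composite of capping with the fundamental class $[\wtilde X]$, the (proper) pushforward on Borel--Moore homology, and the inverse of capping with $[X]$. The goal is then to show $\pi_*\circ\pi^*=\mathrm{id}$, which immediately exhibits $\pi^*$ as a split injection. For this I would use the projection formula $\pi_*(\pi^*\alpha\cap c)=\alpha\cap\pi_*c$ together with $\pi_*[\wtilde X]=[X]$, the latter holding because $\pi$ is proper and birational (degree one, an isomorphism over a dense open whose complement has real codimension $\geq 2$). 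Then for $\alpha\in H^i(X;\Q)$ one computes $(\pi_*\pi^*\alpha)\cap[X]=\pi_*(\pi^*\alpha\cap[\wtilde X])=\alpha\cap\pi_*[\wtilde X]=\alpha\cap[X]$, and since capping with $[X]$ is an isomorphism by duality, $\pi_*\pi^*\alpha=\alpha$, as desired.

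The main obstacle is thus entirely packaged into the rational homology manifold step: once duality is known to hold on the singular space $X$, the transfer computation is purely formal. An alternative and slicker, but heavier, route would invoke the decomposition theorem: because $X$ is a $\Q$-homology manifold we have $\mathrm{IC}_X\cong\Q_X[\dim X]$, so $\Q_X$ occurs as a direct summand of $R\pi_*\Q_{\wtilde X}$ in the derived category, and passing to hypercohomology splits $\pi^*$ directly. I would present the Poincaré duality argument as the main proof, since it is self-contained and elementary, and relegate the decomposition-theorem viewpoint to a remark.
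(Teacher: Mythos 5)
Your proposal is correct, but your main argument takes a genuinely different route from the paper's. The paper proves the lemma in two lines of citations: since $X$ has quotient singularities, $H^i(X;\C)\cong IH^i(X)$ (rational smoothness identifies singular and intersection cohomology), and by the decomposition theorem $IH^i(X)$ is a direct summand of $H^i(\wtilde X;\C)$ for any resolution of a projective variety; what you relegate to a closing remark is exactly the paper's proof. Your primary argument instead runs the classical transfer computation: $X$ is a $\Q$-homology manifold (finite-quotient charts plus averaging), so Poincar\'e duality over $\Q$ holds on $X$; one then defines the Gysin map $\pi_*$ via Borel--Moore homology and fundamental classes, and the projection formula together with $\pi_*[\wtilde X]=[X]$ (as $\pi$ has degree one) gives $\pi_*\pi^*=\mathrm{id}$, splitting $\pi^*$. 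All the steps you use (orientability of complex quotients, the transfer identity $H^*(U/G;\Q)=H^*(U;\Q)^G$, the projection formula for proper maps in Borel--Moore homology, and the degree-one pushforward of fundamental classes) are standard and correctly deployed. What your route buys: it is self-contained and elementary, avoiding the decomposition theorem entirely; it produces an explicit splitting rather than abstract summand-hood; and it handles the stated generality of a (quasi-projective, possibly non-compact) complex variety directly, whereas the paper quotes a result phrased for projective $X$ even though the lemma says ``complex variety.'' What the paper's route buys: brevity, and it sits naturally in the intersection-cohomology framework that the surrounding section already invokes. One small point worth making explicit in your write-up: quotient singularities imply normality, so the reduction to the irreducible (pure-dimensional) case is harmless, as each connected component is irreducible.
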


\begin{proof}
By \cite[Corollary~8.2.22]{HTT}, because $X$ has quotient singularities, $H^i(X;\C)$ is isomorphic to the $i$-th intersection cohomology $IH^i(X)$. But for any projective variety $X$ and resolution $\wtilde X\to X$, $IH^i(X)$ is a direct summand of $H^i(\wtilde X;\C)$ by \cite[Corollary~8.2.27]{HTT}, and in particular injects into it. Thus, we have that $H^i(X;\C)\cong IH^i(X)\to H^i(\wtilde X;\C)$ is injective.
\end{proof} 
 
\begin{lem}
\label{inj}
If $X$ is a complex projective variety with quotient singularities, and $f:Y\to X$ is any surjective morphism from a projective variety $Y$, 
the induced morphisms
$H^i(X;\C)\to H^i(Y;\C)$ are injective.
\end{lem}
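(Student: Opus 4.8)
The plan is to reduce, in two harmless steps, to the case of a \emph{generically finite} surjection between varieties of the same dimension, and then to run a transfer (Gysin) argument whose only essential input is that a variety with quotient singularities is a rational homology manifold.

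First I would dispose of the cohomological bookkeeping needed for the reductions. Since $X$ is irreducible and $f$ is surjective, I may replace $Y$ by an irreducible component dominating $X$, and then by a resolution of singularities $\wtilde Y\to Y$: the pullback $H^i(X;\C)\to H^i(\wtilde Y;\C)$ factors as $H^i(X;\C)\to H^i(Y;\C)\to H^i(\wtilde Y;\C)$, so injectivity of the former forces injectivity of $H^i(X;\C)\to H^i(Y;\C)$. Thus I may assume $Y$ is smooth, projective, and irreducible (the case $\dim X=0$ being trivial, so I assume $\dim X\geq 1$). Next I would cut down to the equidimensional case: if $\dim Y>\dim X$, fix a projective embedding of $Y$ and intersect with general hyperplanes to produce $Y'\subseteq Y$ with $\dim Y'=\dim X$. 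By Bertini, $Y'$ is smooth, and by Bertini connectedness it is irreducible; a count of generic fiber dimensions shows that $f$ restricted to $Y'$ is still surjective, hence generically finite. Writing $\iota:Y'\hookrightarrow Y$, we have $(f\circ\iota)^*=\iota^*\circ f^*$, so injectivity of $(f\circ\iota)^*$ implies injectivity of $f^*$. Hence I may further assume $\dim Y=\dim X=:m$ and that $f$ is generically finite of some degree $d\geq 1$.

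The heart of the argument, and the only place the hypothesis on $X$ enters, is that because $X$ has quotient singularities it is a rational homology manifold: locally $X$ is of the form $U/G$ with $U$ smooth and $G$ finite, and $H^*(U/G;\Q)=H^*(U;\Q)^G$, so $X$ satisfies Poincar\'e duality with $\Q$-coefficients and carries a fundamental class $[X]\in H_{2m}(X;\Q)$. As $Y$ is smooth projective it likewise satisfies $\Q$-Poincar\'e duality. I would then define the Gysin transfer $f_!:H^i(Y;\Q)\to H^i(X;\Q)$ as the Poincar\'e dual of the homological pushforward $f_*:H_{2m-i}(Y;\Q)\to H_{2m-i}(X;\Q)$. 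The projection formula $f_*(f^*\alpha\cap\beta)=\alpha\cap f_*\beta$, together with $f_*[Y]=d\,[X]$, yields $f_!\circ f^*=d\cdot\mathrm{id}$ on $H^i(X;\Q)$; since $d\neq 0$ this makes $f^*$ injective over $\Q$, hence over $\C$.

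The main obstacle is thus not the transfer identity itself (which is formal once the duality is in place) but rather justifying the two ingredients that feed it: that cutting $Y$ by general hyperplanes preserves surjectivity onto $X$, and above all that quotient singularities furnish rational Poincar\'e duality on $X$. I would remark that this is entirely consistent with the intersection-cohomology framework of the preceding lemma and can be rephrased in that language: after reducing to $Y$ smooth, the decomposition theorem exhibits $IC_X$ as a direct summand of $Rf_*\C_Y$ (the constant-sheaf summand coming from the generic trace of $f$), so that $IH^i(X)$ is a direct summand of $H^i(Y;\C)$, and combining this with the isomorphism $H^i(X;\C)\cong IH^i(X)$ for quotient singularities gives the same conclusion. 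I would lead with the transfer argument, since it is more self-contained, and mention the sheaf-theoretic version only as an alternative.
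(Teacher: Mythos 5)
Your proof is correct, but it takes a genuinely different route from the paper's. The paper never uses $X$ as a duality space: it resolves $X$, chooses a smooth projective $Z$ mapping to $Y$ and surjecting onto the resolution $\wtilde X$ (a resolution of a component of $Y\times_X \wtilde X$ dominating $\wtilde X$), and chains two injectivities --- $H^i(X;\C)\hookrightarrow H^i(\wtilde X;\C)$, which is the preceding lemma and rests on $H^i(X;\C)\cong IH^i(X)$ plus the decomposition theorem, and $H^i(\wtilde X;\C)\hookrightarrow H^i(Z;\C)$, the standard fact for surjections of smooth projective varieties --- concluding via the factorization through $H^i(Y;\C)$. You instead exploit the hypothesis through the classical fact that complex quotient singularities make $X$ a rational homology manifold, so $\Q$-Poincar\'e duality and the Gysin transfer $f_!\circ f^*=d\cdot\mathrm{id}$ run on $X$ itself, after the (correct) reductions to a generically finite surjection from a smooth irreducible source. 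What your route buys: it is more self-contained --- the smooth case is subsumed rather than quoted, and no perverse-sheaf machinery is needed; the price is that the reductions (Bertini cutting preserving surjectivity and irreducibility, $f_*[Y']=d[X]$) carry real content, and the key claim that quotient singularities give $\Q$-duality deserves the one-line justification that finite groups of biholomorphisms preserve orientation, hence act trivially on top local cohomology --- this is exactly where complex-analyticity enters. What the paper's route buys: it reuses its preceding lemma verbatim and needs no reduction steps at all. Both arguments, as the paper later remarks, extend verbatim to rationally smooth $X$, since rational smoothness is precisely the condition your duality argument (equivalently, the isomorphism $H^i\cong IH^i$) requires. Your closing sheaf-theoretic alternative --- splitting $IC_X$ off $Rf_*\C$ of a resolution of $Y$ via the trace --- is essentially the paper's mechanism, run directly on $Y$ rather than through $\wtilde X$ and $Z$.
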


\begin{proof}
Let
$\pi:\wtilde X\to X$ be a resolution of singularities,
and let
 $Z$ be any smooth projective variety with a morphism to $Y\times _X \wtilde X$ and surjecting onto $\wtilde X$ (e.g., a resolution of singularities of an irreducible component of $Y\times_X \wtilde X$ dominating $\wtilde X$). The commutative diagram 
$$
\begin{tikzcd}
Z\ar[r]\ar[d] & \wtilde X\ar[d,"\pi"]\\
Y \ar[r,"f"']& X
\end{tikzcd}
$$
gives rise to a commutative diagram
$$
\begin{tikzcd}
H^i(Z;\C)&  H^i(\wtilde X;\C)\ar[l,hook']\\
H^i(Y;\C) \ar[u]& H^i(X;\C)\ar[u,hook]\ar[l]
\end{tikzcd}
$$
Since $X$ has quotient singularities, $H^i(X;\C)\to H^i(\wtilde X;\C)$ is injective, and $H^i(\wtilde X;\C)\to H^i(Z;\C)$ is injective because $Z\to \wtilde X$ is a surjective morphism of smooth projective varieties.
Thus, $H^i(X;\C)\to H^i(Y;\C)$ must be injective as well.
\end{proof}

Now,
let $X$ be a projective \emph{surface} and $L$ an ample line bundle. We want to consider when the homogeneous coordinate ring $R=\bigoplus H^0(X,L^{\otimes m})$ can be a finite direct summand.
We may then assume that $X$ is normal, and thus has isolated singularities.
By Lemma~\ref{klt}, if $R$ is a direct summand of a polynomial ring, the singularities of $\Spec R$ are klt. We must then have that the singularities of 
$X$ itself are klt (and that $X$ is Fano and $L$ a multiple of the anticanonical divisor).
Thus, by well-known results on surface singularities, the singularities of $X$ are in fact quotient.

\begin{lem}
\label{numres}
Let $\P^n \to X$ be a surjective morphism, and assume $X$ is a surface with quotient singularities. If $\pi:\wtilde X\to X$ is the minimal resolution of singularities, then
$$|\set{E: \text{$E$ is a $\pi$-exceptional irreducible divisor}}| = \dim H^2(\wtilde X;\C)-1,$$
\end{lem}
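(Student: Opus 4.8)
The plan is to combine the injectivity furnished by Lemma~\ref{inj} with a Leray spectral sequence computation for $\pi$, the underlying point being that the exceptional curves account for exactly the ``extra'' classes in $H^2(\wtilde X;\C)$ beyond the single class coming from $X$.

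First I would pin down the cohomology of the singular surface $X$ itself. Since $\P^n\to X$ is surjective and $X$ has quotient singularities, Lemma~\ref{inj} gives injections $H^i(X;\C)\hookrightarrow H^i(\P^n;\C)$ for all $i$. As $H^{\mathrm{odd}}(\P^n;\C)=0$ and $H^2(\P^n;\C)=\C$, this forces $H^1(X;\C)=0$ and $\dim H^2(X;\C)\leq 1$; the first Chern class of an ample line bundle has positive self-intersection against the fundamental class of $X$ and so is nonzero, whence $\dim H^2(X;\C)=1$. Because the singularities are quotient we have $H^i(X;\C)\cong IH^i(X)$, and Poincar\'e duality for the intersection cohomology of the compact complex surface $X$ gives $IH^3(X)\cong IH^1(X)=H^1(X;\C)=0$, so that $H^3(X;\C)=0$ as well.

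Next I would identify the higher direct images $R^q\pi_*\C$. Quotient singularities are rational, so over each singular point $p_j$ the fiber $E^{(j)}=\pi^{-1}(p_j)$ is a tree of smooth rational curves; thus $H^0(E^{(j)};\C)=\C$, $H^1(E^{(j)};\C)=0$, and $\dim H^2(E^{(j)};\C)$ equals the number $k_j$ of irreducible components of $E^{(j)}$. By proper base change it follows that $R^0\pi_*\C=\C_X$, that $R^1\pi_*\C=0$, and that $R^2\pi_*\C$ is a skyscraper sheaf supported on the singular locus with total stalk dimension $k=\sum_j k_j$, which is precisely the number of $\pi$-exceptional irreducible divisors. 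I would then read off $\dim H^2(\wtilde X;\C)$ from the spectral sequence $E_2^{p,q}=H^p(X;R^q\pi_*\C)\Rightarrow H^{p+q}(\wtilde X;\C)$: the three terms of total degree $2$ are $E_2^{2,0}=H^2(X;\C)$, $E_2^{1,1}=H^1(X;R^1\pi_*\C)=0$, and $E_2^{0,2}=H^0(X;R^2\pi_*\C)=\C^k$, and $E_2^{2,0}$ is unaffected by differentials since $E_2^{0,1}=H^0(X;R^1\pi_*\C)=0$. The only differentials that could alter $E_2^{0,2}$ are the $d_2$ into $H^2(X;R^1\pi_*\C)=0$ and the $d_3$ into a subquotient of $E_2^{3,0}=H^3(X;\C)=0$, both of which therefore vanish. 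Hence $\dim H^2(\wtilde X;\C)=\dim H^2(X;\C)+k=1+k$, which rearranges to the claimed identity.

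The hardest part will be the two topological inputs that force the spectral sequence to collapse in the right way: identifying the exceptional fibers as trees of rational curves (so that $R^1\pi_*\C=0$ and the stalk count of $R^2\pi_*\C$ is exactly the number of exceptional divisors), and establishing $H^3(X;\C)=0$, which rests on the intersection-cohomology Poincar\'e duality available precisely because the singularities are quotient. Once these are in place, the remaining spectral sequence bookkeeping is routine.
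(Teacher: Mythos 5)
Your proof is correct, and it shares the paper's essential inputs: Lemma~\ref{inj} applied to the surjection $\P^n\to X$ pins down the low-degree cohomology of $X$, and rationality of quotient singularities gives that each exceptional fiber of the minimal resolution is a tree of smooth rational curves, hence has $H^1=0$ and $H^2$ of dimension equal to its number of components. Where you genuinely differ is the bookkeeping device. The paper invokes the long exact sequence of the discriminant square \cite[Corollary-Definition~5.37]{PS},
$$
\cdots\to H^1(E)\to H^2(X)\to H^2(\wtilde X)\oplus H^2(\Sigma)\to H^2(E)\to H^3(X)\to\cdots
$$
(with $\Sigma=X_\sing$ and $E$ the exceptional locus), computes $H^1(E)=0$ and $\dim H^2(E)=r$ by Mayer--Vietoris, and reads off the short exact sequence $0\to\C\to H^2(\wtilde X)\to\C^r\to 0$. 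You instead run the Leray spectral sequence of $\pi$, using proper base change to identify $R^1\pi_*\C=0$ and $R^2\pi_*\C$ as a skyscraper of total dimension $k$, and then verify that the differentials touching the degree-two terms vanish. The two computations carry the same information; the discriminant-square sequence spares one the degeneration check (everything is packaged in a single exact sequence), while your Leray argument is more self-contained, needing only standard sheaf theory rather than the Peters--Steenbrink machinery. Two small remarks: your detour through intersection-cohomology Poincar\'e duality to get $H^3(X;\C)=0$ is unnecessary, since Lemma~\ref{inj} gives the injection $H^3(X;\C)\hookrightarrow H^3(\P^n;\C)=0$ directly (this is what the paper does); on the other hand, your explicit lower bound $\dim H^2(X;\C)\geq 1$ via the self-intersection of an ample class makes precise a point the paper's proof leaves implicit, which is a genuine, if minor, improvement.
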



\begin{proof}

Let $\Sigma = X_\sing$. By \cite[Corollary-Definition~5.37]{PS}, we have a long exact sequence
$$
\cdots\to H^1(E) \to H^2(X)\to H^2(\wtilde X)\oplus H^2(\Sigma) \to H^2(E)\to H^3(X)\to \cdots
$$
(in fact, this is even a long exact sequence of mixed Hodge structures, though we will not need this).
Since $\dim \Sigma = 0$, $H^2(\Sigma)=0$. Since there is a surjection $\P^2\to X$ and $X$ is assumed to have quotient singularities, $H^2(X)=1$ and $H^3(X)=0$.

Since $X$ has quotient singularities, it has rational singularities, and thus the exceptional locus of the minimal resolution $\wtilde X$ has simple normal crossings (see, e.g., the comment following Theorem~1.10 in \cite{Stevens}); moreover, 
the exceptional locus $E$ is the union of rational curves intersecting transversely, and with no cycles in their intersection graph.
Say there are $r$ such exceptional divisors.
It's easily checked via Mayer--Vietoris that $H^1(E)=0$ and $H^2(E)=r$.
(Note that the former claim depends on having no cycles in the intersection graph of the components of $E$: if $E$ is three $\P^1$'s forming a triangle, one can check that $H^1(E)=\C$.)

The long exact sequence thus becomes
$$
0 \to \C \to H^2(\wtilde X) \to H^2(E)\to 0,
$$
and thus $\dim H^2(\wtilde X)-1$ is exactly $\dim H^2(E)$, which is the number of exceptional divisors in the minimal resolution $\pi:\wtilde X\to X$.
\end{proof}

Recall that for a 
\emph{Gorenstein} quotient surface singularity (which is necessarily a hypersurface singularity),
the
number of exceptional components over $p\in X_\sing$ is the Milnor number $\mu(p)$. We immediately have the following corollary:

\begin{cor}
If $\P^n\to X$ is a surjective morphism, with $X$ a surface with only Du Val singularities (i.e., Gorenstein quotient singularities), if $\wtilde X \to X$ is the minimal resolution of $X$, then
$$\sum_{p\in X_\sing }\mu(p) = \dim H^2(\wtilde X;\C)-1,$$
where $\mu(p)$ is the Milnor number of $p$.
\end{cor}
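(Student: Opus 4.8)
The plan is to deduce this statement directly from Lemma~\ref{numres} by identifying the number of exceptional components over a Du Val singularity with its Milnor number. First I would invoke Lemma~\ref{numres}, which already gives the identity
$$
|\set{E: \text{$E$ is a $\pi$-exceptional irreducible divisor}}| = \dim H^2(\wtilde X;\C)-1
$$
for any surface $X$ with quotient singularities admitting a surjection $\P^n\to X$. Since Du Val singularities are exactly the Gorenstein quotient singularities, the hypotheses of Lemma~\ref{numres} are satisfied, so the only remaining task is to re-express the left-hand side as $\sum_{p\in X_\sing}\mu(p)$.

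The key step is therefore the local statement recalled just before the corollary: for a Gorenstein (equivalently, hypersurface) quotient surface singularity $p$, the number of irreducible exceptional components lying over $p$ in the minimal resolution equals the Milnor number $\mu(p)$. I would justify this by recalling that Du Val singularities are the rational double points, whose minimal resolutions have dual graphs given by the simply-laced Dynkin diagrams $A_n, D_n, E_6, E_7, E_8$; the number of vertices (hence of exceptional components) is the rank of the corresponding root system. On the other hand, for the hypersurface singularity defining a rational double point the Milnor number equals this same rank (e.g.\ $\mu=n$ for $A_n$, given by $x^2+y^2+z^{n+1}$, and correspondingly for the $D$ and $E$ types), so the count of exceptional components over $p$ is precisely $\mu(p)$.

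Having established this local identification, I would sum over the finitely many singular points $p\in X_\sing$: the total number of $\pi$-exceptional irreducible divisors on $\wtilde X$ is exactly $\sum_{p\in X_\sing}\mu(p)$, since the exceptional locus over distinct points is disjoint and each contributes $\mu(p)$ components. Combining this with Lemma~\ref{numres} yields
$$
\sum_{p\in X_\sing}\mu(p) = \dim H^2(\wtilde X;\C)-1,
$$
as desired. The main obstacle is not in the global topology — that is entirely supplied by Lemma~\ref{numres} — but in the local classification fact that the Milnor number of a rational double point coincides with the number of exceptional components in its minimal resolution; this is standard in the theory of Du Val singularities, and I would simply cite it rather than reprove the Dynkin-diagram classification.
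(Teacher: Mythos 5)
Your proposal is correct and matches the paper's own (essentially immediate) argument: the paper likewise combines Lemma~\ref{numres} with the standard fact, recalled just before the corollary, that for a Du Val singularity the number of exceptional components in the minimal resolution equals the Milnor number, and then sums over the singular points. Your extra justification via the Dynkin-diagram classification is exactly the standard fact the paper cites implicitly, so there is no substantive difference.
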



With these preliminaries, we can now prove Theorem~\ref{singdel}:

\begin{proof}[Proof of Theorem~\ref{singdel}]
We now specialize to the case where $X$ is a cubic surface with quotient singularities. The minimal resolution $\wtilde X\to X$ is then the blowup of $\P^2$ at six points, and so $H^2(\wtilde X;\C)=\C^7$.  The possible configurations of quotient singularities of $X$ are known (see, for example, \cite[Table~2]{cubics}), and are exactly the following:
$$
\displaylines{
A_1, 2A_1, A_1A_2, 3A_1, A_1A_3, 2A_1A_2, 4A_1, A_1A_4,2A_1A_3,A_12A_2,
\hfill\cr\hfill
A_1A_5,A_2,2A_2,3A_2,A_3,A_4,A_5,D_4(1),D_4(2),D_5,E_6
}
$$
Only three of these have sum of Milnor numbers equal to 6: $3A_2$, $A_1A_5$, and $E_6$.
\begin{itemize}
\item 
The $3A_2$ case is exactly the toric surface
$$
\C[x,y,z,w]/(x^3-yzw),
$$
which we've already seen is a finite direct sum of a polynomial ring.
\item The $A_1A_5$ surface is defined by
$$
\C[x,y,z,w]/(y^3+w(x^2+yz)).
$$
\item The $E_6$ surface is defined by
$$\C[x,y,z,w]/(y^{3}+w(x^{2}+zw))$$
\end{itemize}
For the latter two, \cite[Lemma~10]{Gurjar} has shown that they do not admit any morphisms (necessarily finite) from $\P^2$, using different methods from the ones here. 
\end{proof}

It would be interesting to have a direct, elementary proof of the $A_1A_5$ and $E_6$ cases that does not make use of the classification of \cite[Lemma~10]{Gurjar}, though we have not found one yet.

\begin{rem}
The $3A_2$ case appears also in the study of the moduli space of cubic surfaces: it is the unique strictly semistable point  in the moduli space of semistable cubic surfaces, which is a one-point compactification of the space of stable cubic surfaces
(see, e.g., \cite[Theorem~7.24]{Mukai}).
It would be interesting to know if there's a conceptual reason why the only cubic surface that is a finite direct summand of a polynomial ring is also the unique strictly semistable cubic surface, and if this occurs in other dimensions. For example, is the singular quartic $V(x^4-yzwu)$ (which is a finite direct summand of a polynomial ring) a strictly semistable quartic threefold?
\end{rem}

\begin{rem}
One could repeat the same analysis with other del Pezzo surfaces with quotient singularities, and attempt to determine which of them are finite direct summands.
Again, if $X_d$ is a del Pezzo of degree $d\leq 5$ with quotient singularities, the minimal resolution $\wtilde X$ will be the blowup of $\P^2$ at $9-d$ general points, and thus will have $\dim H^2(\wtilde X) = 1+(9-d)$. A necessary condition to be a finite direct summand is thus that the sum of the Milnor numbers at the singular points is $9-d$.
As an example, consider degree-4 del Pezzos, which are intersections of two quadrics in $\C[x,y,z,w,u]$.
The degree-4 del Pezzo defined by
$$
\C[x,y,z,w,u]/(w^{2}-yu, x^{2}-zw)
$$
is toric, and a finite direct summand. It has one $A_3$ singularity and two $A_1$ singularities, so the sum of the Milnor numbers is 5, as we know is necessary.

We note also that the finiteness condition is critical in order to use the cohomological obstruction:
the ring
$$
\C[x,y,z,w,u]/(yz-wu, x^{2}-wu)
$$
is also toric, and thus a direct summand of a polynomial ring, but it cannot be a finite direct summand, as it has only four $A_1$ singularities, and thus the sum of the Milnor numbers is 4.

In some sense, the cubic surface case is especially clear, because there is only one (normal) toric cubic surface up to isomorphism, while (for example)  we see above that there are at least two toric degree-4 del Pezzos, with different behavior.
\end{rem}

\begin{rem}
After the initial calculations above were completed, we discovered that \cite[Lemma~10]{Gurjar} had completely classified the Gorenstein log del Pezzo surfaces dominated by $\P^2$, using different methods from those above.
This involves showing that almost all Gorenstein log del Pezzo surfaces dominated by $\P^2$ are in fact finite quotients of $\P^2$. 
As a corollary, if such Gorenstein log del Pezzo surface admits a finite covering by $\P^2$, it admits a finite covering by $\P^2$ unramified in codimension 1.
In general, it would be interesting to know to what degree one can ``remove ramification'' from a given finite cover by $\P^n$.
\end{rem}

\begin{rem}
There are several obstacles to extending the above analysis in higher dimensions.

The first obstacle is that in higher dimensions, klt singularities are not necessarily quotient singularities, and thus the morphisms on cohomology induced by a surjection from a smooth projective variety need not be injective. It might be possible to obtain injectivity in certain degrees, or on certain graded pieces of the mixed Hodge structure, and thus still be able to utilize parts of the above approach.

The second is that the relation between the cohomology of $X$ and a resolution of singularities becomes more complicated, even when $X$ has quotient singularities. Because the singularities of $X$ need not be isolated, and the intersection of the exceptional divisors of a resolution will not be zero-dimensional, understanding the cohomological relation between $X$ and its resolution will require a more intricate and sophisticated analysis.
\end{rem}

\begin{rem}
One can also attempt to weaken the restriction on the singularities of $X$. The important feature of quotient singularities is the intersection cohomology is the same as the singular cohomology. This is true for a broader class of varieties: the \emph{rationally smooth} varieties (see, e.g., \cite[Definition~8.20]{HTT}). Quotient singularities are rationally smooth, but the converse is not true. For example, \cite[Lemma in Section~1.3]{Brion} implies that ordinary double point singularity of dimension $n$, analytically locally given by 
$ x_0^2+\dots+x_n^2 $,
is rationally smooth if and only if $n$ is odd. In contrast, our results in Section~\ref{finite} imply that this is a quotient singularity only when $n=3$.
Note that this shows that rationally smooth singularities are still a fairly restrictive notion, as for example threefolds with ordinary double points will not be rationally smooth, thus precluding the above approach.
\end{rem}

\let\l\oldl
\let\b\oldb
\bibliographystyle{alpha}
\bibliography{link}
\end{document}